\crefname{hypothesis}{Hypothesis}{Hypotheses}
\title{Fast and robust consensus-based optimization via optimal feedback control
}
\author{Yuyang Huang\thanks{Department of Mathematics, Imperial College London, South Kensington Campus SW72AZ London, UK \email{\{yuyang.huang21,dkaliseb,n.kantas\}@imperial.ac.uk}.}\and Michael Herty\thanks{IGPM, RWTH Aachen University, Templergraben 55, D-52062 Aachen, Germany; \email{herty@igpm.rwth-aachen.de}.}\and Dante Kalise\footnotemark[1]\and Nikolas Kantas\footnotemark[1]}
\newcommandx{\change}[2][1=]{\todo[linecolor=blue,backgroundcolor=blue!25,bordercolor=blue,#1]{#2}}
\begin{document}

\maketitle

\begin{abstract}
We propose a variant of consensus-based optimization (CBO) algorithms, \textsl{controlled-CBO}, which introduces a feedback control term to improve convergence towards global minimizers of non-convex functions in multiple dimensions. The feedback law is a gradient of a numerical approximation to the Hamilton-Jacobi-Bellman (HJB) equation, which serves as a proxy of the original objective function. Thus, the associated control signal furnishes gradient-like information to facilitate the identification of the global minimum without requiring derivative computation from the objective function itself. The proposed method exhibits significantly improved performance over standard CBO methods in numerical experiments, particularly in scenarios involving a limited number of particles, or where the initial particle ensemble is not well positioned with respect to the global minimum. At the same time, the modification keeps the algorithm amenable to theoretical analysis in the mean-field sense. The superior convergence rates are assessed experimentally.
\end{abstract}

\begin{keywords}
global optimization, consensus-based optimization, Hamilton–Jacobi–Bellman PDEs, high-dimensional polynomial approximation
\end{keywords}

\begin{MSCcodes}
65K10, 35Q93, 90C56
\end{MSCcodes}

\section{Introduction}
Optimization plays an essential role in modern science, engineering, machine learning, and statistics \cite{goffe1994global,bottou2018optimization,kulkarni2015particle}. Over the last decades, the flourishing of computational data science has brought significant challenges to optimization, such as high-dimensional search spaces, and  non-convex, non-smooth objective functions. These features often limit the use of traditional gradient-based optimization methods \cite{lan2020first,nesterov2018smooth}. For general optimization problems of the form
\begin{equation}
\label{min}
\min _{x \in \mathbb{R}^d} f(x),
\end{equation}
where $f:\mathbb{R}^d\rightarrow\mathbb{R}$ is continuous, bounded from below,  and attains a unique global minimum,
the objective function is possibly  nonconvex and has many local minimizers. In such settings, metaheuristic methods have been proposed as an effective alternative for addressing the challenges of global optimization. Among these metaheuristics, agent-based algorithms such as Ant Colony Optimization, Particle Swarm Optimization, or Simulated Annealing \cite{wang2018particle,dorigo2006ant,bertsimas1993simulated,Ding2024SwarmbasedGD} have demonstrated remarkable performance when applied to NP-hard problems. More recently, a novel class of multi-particle, derivative-free methods, known as  \emph{Consensus-based Optimization} (CBO), has been introduced in \cite{pinnau2017consensus}. Through a combination of probabilistic and mean field arguments, these methods are capable of effectively solving non-smooth and non-convex global optimization problems in multiple dimensions.

The CBO method with anisotropic diffusion employs a system of $N \in \mathbb{N}$ interacting particles with position vector $X_t^i \in \mathbb{R}^d, i=1, \ldots, N$, evolving in time $t\in[0,\infty)$ according to a system of  stochastic differential equations (SDEs):
\begin{equation}
\label{sde1}
d X_t^i=-\lambda\left(X_t^i-v_\alpha(\rho_t^N)\right) d t+\sigma \operatorname{Diag}(X_t^i-v_\alpha(\rho_t^N)) d W_t^i,
\end{equation}
where $\lambda,\sigma>0$  are drift and noise parameters, respectively. The operator $\operatorname{Diag}:\mathbb{R}^d\rightarrow\mathbb{R}^{d\times d}$ maps a vector $\nu\in\mathbb{R}^d$ onto a diagonal matrix with elements of $\nu$, and $\left((W_t^i)_{t\geq 0}\right)_{i=1,\cdots,N}$ are i.i.d Wiener processes in $\mathbb{R}^d$.
We assume that the initial condition of the particles $X_0^i \in \mathbb{R}^d$  are i.i.d with $\operatorname{law}\left(X_0^i\right)=\rho_0 \in \mathcal{P}\left(\mathbb{R}^d\right)$, where the set $\mathcal{P}\left(\mathbb{R}^d\right)$ contains all Borel probability measures over $\mathbb{R}^d$. The particle dynamics are driven by two forces: a drift term, forcing particles to move towards the consensus point $v_\alpha(\rho_t^N)$, and a diffusion term, allowing random exploration of the search space, see \cite{pinnau2017consensus,totzeck2022trends} for a detailed description. The consensus point $v_\alpha(\rho_t^N)$ is calculated by the weighted average
\begin{equation}
\label{v_f}
v_\alpha(\rho_t^N):=\frac{1}{\sum_{i=1}^N \omega_f^\alpha\left(X_t^i\right)} \sum_{i=1}^N  X_t^i \omega_f^\alpha\left(X_t^i\right),
\end{equation}
where we denote by $\rho_t^N$
the empirical measure $\frac{1}{N}\sum_{i=1}^N\delta_{X_t^i}$. The weight  $\omega_f^\alpha$ is defined as
\begin{equation}
\label{weight}
\omega_f^\alpha(x)=\exp (-\alpha f(x)), \quad \alpha>0,
\end{equation}  
which assigns higher weights to lower objective function values $f(\cdot)$. As a result, particles with lower function values have a stronger influence on both $v_\alpha(\rho_t^N)$ and the motion of the rest of the particles. Due to Laplace's principle  from large deviations theory \cite{dembo2009large}, we expect $v_\alpha(\rho_t^N)$ to approximate the global minimum of the particle system when $\alpha$ is large enough, i.e.,
$$
\lim_{\alpha\rightarrow\infty}v_\alpha(\rho_t^N) \approx \operatorname{argmin}_{i=1, \ldots, N} f\left(X_t^i\right).
$$
A very relevant feature of CBO methods, as opposed to traditional metaheuristics, is the fact that they are amenable to rigorous convergence analysis in the mean-field, long-term evolution, limit \cite{carrillo2018analytical,huang2022mean,fornasier2024consensus,riedl2023gradient}. Recent extensions of CBO include constrained optimization \cite{borghi2023constrained,fornasier2021consensus,beddrich2024constrained,carrillo2023consensus}, multi-objective optimization \cite{borghi2022consensus,klamroth2024consensus} and multi-level optimization~\cite{herty2024multiscale,trillos2024cb,borghi2024particle}. Also,  variants such as CBO 
 with jump-diffusions \cite{kalise2022consensus}, adaptive momentum \cite{chen2020consensus}, personal best information \cite{totzeck2020consensusbased} and
polarization  \cite{bungert2022polarized} have been proposed to improve  its performance. In this paper, we will study a novel variant of the CBO algorithm, which we refer to as \textsl{controlled-CBO}. By borrowing a leaf from optimal control theory, we enhance robustness and convergence towards global minimizers by introducing a feedback control term.
 
\subsection*{A control-theoretical approach to global optimization}
The connection between control theory and global optimization has gained considerable attention over the last years. Most notably, in Chaudhari et al. \cite{chaudhari2018deep}, Hamilton-Jacobi-Bellman (HJB) PDEs arising from dynamic programming and optimal feedback control have been studied as a tool for convexification. More recently, Bardi and Kouhkouh \cite{bardi2023eikonal}
formulated an eikonal-type HJB equation for global optimization using weak Kolmogorov-Arnold-Moser (KAM) theory and a small-discount approximation. The solution, represented as the value function of an optimal control problem, yields optimal trajectories that converge to a global minimizer of the problem \eqref{min} without computing the gradient of the objective function.

In the following, we present a deterministic,  infinite horizon, discounted formulation of the global-optimization-as-optimal-control approach. Consider a control system where the control variable $u(\cdot)$ governs the state trajectory  $y(t)\in \mathbb{R}^d$
through the dynamics
\begin{equation}
\label{control}
\dot{y}=u(t), \quad u(\cdot) \in \mathcal{U}, \quad y(0)=x.
\end{equation}
Here, the control lies in the set $\mathcal{U}:=\left\{u(t): \allowbreak\mathbb{R}_+ \mapsto \mathbb{R}^d, \text {Lebesgue measurable a.e. in } t\right\}$ and $x\in\mathbb{R}^d$ is a given initial condition. To quantify the performance of the control, we consider an objective function
$$
\mathcal{J}\left(u(\cdot), x\right):=\int_0^{\infty}e^{-\mu t} 
\left(f\left(y(t)\right)+\frac{\epsilon}{2}|u(t)|^2\right) d t,
$$
where $\mu>0$ is a discount factor and $\epsilon>0$ is a
parameter for Tikhonov regularization. Then, the optimal control is obtained by solving the time-homogeneous, discounted infinite horizon optimal control problem
\begin{equation}
\label{eq:control_prob}
\min _{u(\cdot) \in \mathcal{U}} \mathcal{J}\left(u(\cdot), x\right)\quad\text{subject to }\cref{control}.
\end{equation}
It is well-known (e.g. \cite[Section II.11]{fleming2006controlled}) that the optimal value function
\begin{equation*} 
V\left(x\right)=\inf _{u(\cdot) \in \mathcal{U}} \mathcal{J}\left(u(\cdot), x\right)
\end{equation*}
 is the unique viscosity solution to the stationary Hamilton-Jacobi-Bellman (HJB) equation:
\begin{equation}
\label{HJB}
-\mu V(x)+\min_{u \in \mathbb{R}^d}\left\{D V(x)^\top u+f(x)+\frac{\epsilon}{2}|u|^2\right\}=0,
\end{equation}
with $D V=\left(\partial_{x_1} V, \ldots, \partial_{x_d} V\right)^\top $.
Once the HJB equation \cref{HJB} is solved, the optimal control $u^*$ of \cref{HJB} is given in feedback form by
\begin{equation}
\label{optimal control}
u^*(x):=\underset{u \in \mathbb{R}^d}{\operatorname{argmin}}\left\{D V(x)^\top  u+f(x)+\frac{\epsilon}{2}|u|^2\right\}=-\frac{1}{\epsilon}  D V(x),
\end{equation}
which depends only on the current state 
$x=x(t)$ (the initial condition for the remaining horizon) along a trajectory.
 The control law utilizes gradient information $DV$ from the value function, obtained from the solution of the HJB PDE, rather than directly fetching the gradient of the objective $f$. 
 \begin{remark}
There are interesting connections between the optimal control problem defined in \cref{control}-(\ref{optimal control}) and the global minimization \eqref{min} at the limit when the discount factor $\mu \rightarrow 0$. In this regime, \cite{bardi2023eikonal} shows (for $\varepsilon=$ 1) that the PDE (\ref{HJB}) converges to $\min f+\frac{1}{2}|DV(x)|^2=f(x)$, and the system governed by $\dot{y}=-\frac{D V}{|D V|}$ converges to a global minimizer in finite time. Although establishing a rigorous connection with our approach lies beyond the scope of this work, one may interpret the control term in \cref{optimal control}, for small values of $\mu>0$,  as a relaxation of the strategy proposed in \cite{bardi2023eikonal}. From this perspective, the use of $D V$ in the feedback control offers an advantage over classical gradient descent based on $D f$, as it is more capable of transcending local minima. As such, the control $u^*$ derived from $D V$ behaves as a more promising candidate for guiding the CBO dynamics.  On the other hand when $\mu=\epsilon=0$, and the control action is restricted to the unit ball, the HJB equation \cref{HJB} simplifies to $$\underset{\|u\|\leq 1}{\min}\left\{D V(x)^\top u+f(x)\right\}=0,$$leading to an Eikonal type equation $|DV|=f$ and to a normalized gradient decent $u^*=-\frac{DV}{|DV|}$.
\label{rem:1}
\end{remark}

While this framework provides valuable theoretical insights, its practical implementation remains unexplored. Most notably, it requires the numerical approximation of a $d-$dimensional HJB PDE. The accurate numerical implementation of direct (“data‐free”) solvers for high‐dimensional, stationary HJB PDEs remains a formidable challenge. Sparse‐grid \cite{bokanowski2013adaptive,kang2017mitigating} techniques yield rigorous convergence under suitable regularity assumption and are practically feasible for systems up to dimension 8. Low‐rank tensor decomposition methods \cite{dkk21} have been shown to solve HJB PDEs in dimensions exceeding 100, but the convergence analysis
for nonlinear dynamics remains unresolved. 
In the present work, we use 
polynomial approximation \cite{Kalise_2018} solved via Galerkin projection. This choice gives closed‐form expressions for the surrogate value function and enables a fully rigorous convergence analysis of the approximation.  When combined with hyperbolic cross basis \cite{beck2012optimal,chkifa2015breaking}, the method can be applied to problems  with dimensionality up to 80 at moderate computational cost \cite{azmi2021optimal}.

However, such methods naturally introduce numerical discretization errors that will affect the convergence of the optimal trajectories towards the global minimizer. In this paper, we bridge this gap between numerical discretization errors and global optimality by augmenting the standard CBO method with the resulting state feedback law $u^*(x)$, derived from the optimal control problem \cref{eq:control_prob}. Revisiting the system \eqref{sde1}, we introduce the following controlled-CBO dynamics:
  \begin{equation}
  \label{eq:cbo_intro}
d X_t^i=[-\lambda\left(X_t^i-v_\alpha(\rho_t^N)\right)+\beta u^*(X_t^i)]d t+\sigma \operatorname{Diag}\left(X_t^i-v_\alpha(\rho_t^N)\right) d W_t^i\,
\end{equation}
with a parameter $\beta>0$ controlling the strength of the control. In addition to the standard drift in \cref{sde1},  the feedback control provides gradient-like information, effectively guiding the particles toward the minimizer of the objective function.  Note that the controlled-CBO method remains gradient-free, in the sense that no gradient of the objective function is required. This is particularly relevant for applications related to shape optimization, where the computations of gradients is computationally demanding \cite{KKS}. Our approach is similar in spirit to that of  Schillings et al. \cite{schillings2023ensemble}, where ensemble techniques and generalized simplex gradients (referred to ensemble-based gradient inference) are used to introduce an additive guiding term into the CBO dynamics.  
\subsection*{Main contributions of this work}
We highlight the most relevant contributions of this paper:
\begin{enumerate}
    \item The controlled-CBO method significantly enhances the performance of standard CBO algorithms. It achieves faster convergence rates and improved robustness, especially in scenarios where the number of particles is limited and there is a lack of good prior knowledge for initializing the particle system.     
    \item The proposed methodology demonstrates the potential of control-based meth\-od as a significant advancement in solving complex, high-dimensional, non-convex optimization problems. While previous work \cite{bardi2023eikonal} established a theoretical foundation for using HJB equations in global optimization, it did not offer a numerically feasible solution due to the complexity of solving HJB equations. Our method overcomes this challenge by applying the successive approximation algorithm \cite{Kalise_2018} to solve the HJB equations  in infinite-horizon cases and utilising an interacting particle system to correct the approximation errors.
    \item Additionally, we conduct a thorough convergence analysis of the successive approximation algorithm and provide rigorous proofs of the well-posedness of the resulting interacting particle system. 
\end{enumerate}

The solution of the HJB PDE  \eqref{HJB} can be viewed as a proxy for the objective function $f$, and it has value in its own right from both practical and theoretical perspectives. Here, we emphasize that combining it with the CBO method gives a numerically robust approach in terms of overall accuracy and scalability for a given computational budget. Even a coarse, relatively inexpensive approximation of $V$ is sufficient to steer the dynamics quickly toward a favorable region, and then CBO can refine the search and achieve high accuracy.  
The rest of the paper is structured as follows. In \Cref{num_HJB}, we study the numerical approximation of the HJB PDE \eqref{HJB} using a successive approximation algorithm and high-dimensional polynomial basis. After obtaining the approximate feedback control law, in \Cref{control_CBO}, we present the controlled-CBO algorithm, discussing the well-posedness of the controlled-CBO dynamics. 
In \Cref{numerical2} we provide extensive numerical experiments on classic benchmark optimization problems.

\section{Numerical approximation of the HJB PDE}
\label{num_HJB}
In this section, we introduce a method to approximate the value function $V$ and the optimal feedback map $u^*$ from the HJB equation \eqref{HJB}. We apply a successive approximation algorithm in the same spirit as in \cite{beard1997galerkin,beard1998approximate,Kalise_2018}. Note that this algorithm corresponds to a continuous-in-space version of the well-known policy iteration algorithm~\cite{kundu2024policy}. These algorithms can be interpreted as a Newton iteration to address the nonlinearity present in \eqref{HJB}.  As such, a fundamental building block is the solution, at the $m$-th ($m\in\mathbb{N})$ iteration of the method
given a fixed control law $u^{(m)}(x)$, of the linear Generalized HJB equation (GHJB) for $V^{(m)}$:
\begin{equation}
\begin{aligned}
\label{GHJB}
&\mathcal{G}_\mu(V^{(m)}, D V^{(m)} ; u^{(m)})=0,\\
&\mathcal{G}_\mu(V, D V ; u):=-\mu V+D V^\top u+f+\frac{\epsilon}{2}|u|^2.
\end{aligned}
\end{equation}
Having computed the value function $V^{(m)}$, an improved feedback law is obtained as $u^{(m+1)}=-\frac{1}{\epsilon}DV^{(m)}$ from equation \cref{optimal control}, and we iterate via \eqref{GHJB}. Throughout the iterative processes, the solution of GHJB equation converges uniformly to the solution of HJB equation \cref{HJB}; see \cite{beard1997galerkin,beard1995improving} for details.
In order to guarantee the existence of a solution to the GHJB equation, we require that at every iteration, $u^{(m)}$ is an admissible feedback control, in the sense of \Cref{def:admissible}:\begin{definition}
\label{def:admissible}
A feedback control $u:=u(x)$ is admissible on $\Omega\subset\mathbb{R}^d$, written $u\in\mathcal{A}(\Omega)$, if $u$ is continuous on $\Omega$ and $\mathcal{J}(u(\cdot),x)<\infty$ for any $x\in\Omega$.
\end{definition}
We refer to \Cref{lemma_existence} in \Cref{sec:appx_vi} for the role that the admissibility condition plays in the existence of a solution to the GHJB equation. Despite being a linear equation, a general closed-form solution of the GHJB equation remains elusive. Therefore, we approximate the GHJB equation using a Galerkin method with global polynomial basis functions, and provide sufficient conditions for the convergence of the method.

The numerical approximation of the GHJB equation begins with the selection of set of (not necessarily linearly independent) continuously differentiable basis functions  $\Phi_n(x)=\left\{\phi_i(x)\right\}_{i=1}^{n}$ of $\mathcal{L}^2(\Omega)$, where each $\phi_i\in\mathcal{L}^2(\Omega\,;\mathbb{R})$. We approximate the solution $V_n$ to the \Cref{GHJB} by a
Galerkin projection:
$$
V_n(x)=\sum_{i=1}^n c_i \phi_i(x)=\Phi_n(x)^\top\mathbf{c}_n\,,
$$
and determine the coefficients $\mathbf{c}_n=\{c_i\}_{i=1}^{n}$  by solving a system of residual equations for a given admissible control $u$
\begin{equation}
\langle \mathcal{G}_\mu\left(V_n, DV_n; u\right),\phi_i\rangle:=
\int_{\Omega}\mathcal{G}_\mu\left(\Phi_n^\top\mathbf{c}_n, \nabla\Phi_n^\top\mathbf{c}_n; u\right) \phi_i(x)\; d x=0\,,\quad 1\leq i\leq n.
\label{Eq:residual}
\end{equation}
For the sake of simplicity, and with a slight abuse of notation, we define the vector-valued inner product against $\Phi_n$ as $\left\langle \mathcal{G}_\mu, \Phi_n\right\rangle=\left(\left\langle \mathcal{G}_\mu, \phi_1\right\rangle, \ldots,\left\langle \mathcal{G}_\mu, \phi_n\right\rangle\right)^{\top},$ where $\mathcal{G}_\mu$ is shorthand for $\mathcal{G}_\mu\left(V_n, DV_n; u\right)$ and each $\left\langle\mathcal{G}_\mu, \phi_i\right\rangle$  represents the scalar inner product as defined in \cref{Eq:residual}. The approximated feedback control $u_n$  is then recovered as:
$$
u_n(x)  =-\frac{1}{\epsilon}D V_n(x) \\ =-\frac{1}{\epsilon} \nabla \Phi_n(x)^\top \mathbf{c}_n.
$$
The resulting successive approximation algorithm is presented in \cref{alg1}. Starting from an admissible initial control $u^{(0)}$, at each iteration, the control law is updated based on the gradient of the approximated value function. 
After attaining a preset tolerance, the algorithm returns an approximation $u_n^{(m+1)}$ to $u^*$. The output
$u_n^{(m+1)}$ will be then used as  forcing term for the controlled-CBO in \eqref{eq:cbo_intro}. 
\begin{algorithm}
\small
\begin{algorithmic}[1]
\STATE $\textbf{Given: } \mu>0,\; \epsilon \in(0,1),\; tol>0,$ fix $\Omega,\;\Phi_n$.
\STATE  \textbf{Input} $u_n^{(0)}\equiv u^{(0)}\in \mathcal{A}(\Omega)$  \hfill \% initialization with  $m=0$
\STATE \textbf{While } $\|u_n^{(m+1)}-u_n^{(m)}\|>tol$\textbf{ do }
\STATE    \qquad \textbf{Solve} 
$\left\langle\mathcal{G}_\mu\left(\Phi_n^\top\mathbf{c}_n^{(m)}, \nabla\Phi_n^\top\mathbf{c}_n^{(m)}; u_n^{(m)}\right),\Phi_n\right\rangle=0$\\
 \hfill \% compute $\mathbf{c}_n^{(m)}$ by solving \Cref{Eq:residual} 
\STATE    \qquad \textbf{Obtain} $V_n^{(m)}(x)=\mathbf{c}_n^{(m)}\Phi_n(x)$ \hfill \% Obtain value function\\ 
\STATE    \qquad \textbf{Update} $u_n^{(m+1)}(x)=-\frac{1}{\epsilon}D V_n^{(m)}(x)$ \hfill \% update control function
\STATE \textbf{End While}
\STATE \textbf{Return} $u^*=u_n^{(m+1)}(x)$ and $V^*=V_n^{(m)}(x)$ \hfill \% output
\end{algorithmic}
\caption{}
\label{alg1}
\end{algorithm}

\paragraph{Convergence} 
We adapt sufficient conditions from \cite{beard1997galerkin,beard1998approximate,beard1995improving} to establish the convergence of the iterative scheme and of $V_n^{(m)}$.
\begin{assumption}
    \begin{enumerate}[label=(\roman*)]    
        \item The domain $\Omega$ is a compact set over which $f$ is non-negative and Lipschitz continuous.
        \label{1}
        \item The initial control $u^{(0)}\in\mathcal{A}(\Omega)$.
        \label{2}
        \item 
       Given arbitrary control $u\in\mathcal{A}(\Omega)$, the solution of $\mathcal{G}_\mu\left(V,DV; u\right)=0$ satisfies $V \in \operatorname{span}\left\{\phi_i\right\}_{i=1}^{\infty} \subseteq \mathcal{L}^2(\Omega)$.
        \label{3}
        \item For any $1\leq i,j\leq n$, $u\in\mathcal{A}(\Omega)$ we have $\frac{\partial \phi_i^\top}{\partial x} u,\;\left|u\right|^2,\; \frac{\partial \phi_i^\top}{\partial x} \frac{\partial \phi_j}{\partial x}$ are continuous and in $\operatorname{span}\left\{\phi_i\right\}_{i=1}^{\infty}$. 
        \label{4}
        \item $\sum_{i=1}^\infty c_i\phi_i$ and $\sum_{i=1}^\infty c_i\frac{\partial \phi_i}{\partial x}$ converge uniformly to $V$ and $\frac{\partial V}{\partial x}$ on $\Omega$, respectively.
\label{5}
  \item The coefficients $\{c_i\}_{i=1}^n$ of $V_n=\sum_{i=1}^nc_i \phi_i$ are uniformly bounded for all $n$. 
\label{c_bdd}
        \item For any $u\in\mathcal{A}(\Omega)$, there exists $x\in\Omega$ such that $\left(-\mu \phi_i +\frac{\partial \phi_i}{\partial x} \cdot u\right)(x)\neq 0$, $1\leq i\leq n$.
    \label{new}
\item $\sum_{i=1}^{\infty}\left\langle f+\frac{\epsilon}{2}\left|u\right|^2, \phi_i\right\rangle \phi_i
$, $\sum_{i=1}^{\infty}\left\langle \phi_k, \phi_i\right\rangle \phi_i
$  and $\sum_{i=1}^{\infty}\left\langle\frac{\partial \phi_k}{\partial x} u, \phi_i\right\rangle \phi_i$ are\\ \textit{pointwise decreasing} \footnote{A pointwise convergent infinite sequence $\sum_{i=1}^{\infty} c_i \phi_i(x)$ on $\Omega$ is called pointwise decreasing, if $\forall \;k\in\mathbb{N}$, $\forall \varepsilon>0$, there exists $\rho>0$ and $m>0$ such that $\forall x \in \Omega$, then $n>m$ and $\left|\sum_{i=k+1}^{\infty} c_i \phi_i(x)\right|<\rho$ imply that
$
\left|\sum_{i=k+n+1}^{\infty} c_i \phi_i(x)\right|<\varepsilon 
$ (see \cite[Definition~17]{beard1997galerkin}).} for any 
 $u\in\mathcal{A}(\Omega)$, $k=1,2,\cdots$.
\label{6}
    \end{enumerate}
    \label{ass_convergence}
\end{assumption}
We note that although the dynamics \eqref{control} are defined on $\mathbb{R}^d$, the compact domain $\Omega\subset\mathbb{R}^d$ in \Cref{assumption}\ref{1} is introduced for the numerical approximation of the value function $V$. In practice, $\Omega$ is chosen to be sufficiently large and since the dynamics are linear in the control $u(\cdot)$, it is always possible to construct admissible controls that ensure the trajectories to remain within $\Omega.$ 
The convergence of $V_n^{(m)}$ to the solution of the HJB equation has been thoroughly studied   for the \emph{un-discounted} infinite horizon case in \cite{beard1995improving,beard1997galerkin,kundu2024policy, beard1998approximate}, in which the admissible control is further required to  asymptotically stabilize the system around zero. However, in our case, the inclusion of the discount factor $\mu$ simplifies such admissibility conditions. We present a convergence result similar to \cite[Theorem 4.2]{beard1998approximate}.
\begin{proposition}
\label{main}
    Denote the solution to the HJB equation (\ref{HJB}) by $V^*$. Assume Assumption \ref{ass_convergence} holds; then for any $\delta>0$, there exists $M,N\in\mathbb{N}$ such that $n>N$ and $m>M$ imply $\vert V^{(m)}_n - V^*\vert_{L^2(\Omega)}<\delta$ with $u_n^{(m+1)}\in\mathcal{A}(\Omega)$.
\end{proposition}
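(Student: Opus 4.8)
The plan is to bound $\|V_n^{(m)} - V^*\|_{L^2(\Omega)}$ by separating an \emph{iteration error} from a \emph{Galerkin discretisation error} through the triangle inequality
\[
\|V_n^{(m)} - V^*\|_{L^2(\Omega)} \le \underbrace{\|V_n^{(m)} - V^{(m)}\|_{L^2(\Omega)}}_{\text{Galerkin error}} + \underbrace{\|V^{(m)} - V^*\|_{L^2(\Omega)}}_{\text{iteration error}},
\]
where $V^{(m)}$ denotes the \emph{exact} solution of the GHJB equation \cref{GHJB} associated with the exact policy-iteration control $u^{(m)}=-\tfrac1\epsilon DV^{(m-1)}$, and $V_n^{(m)}$ is the Galerkin iterate produced by \cref{alg1}. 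The two terms are then driven to zero by independent mechanisms: convergence of the successive approximation (the outer loop in $m$) and convergence of the Galerkin projection (the inner discretisation in $n$). The admissibility assertion $u_n^{(m+1)}\in\mathcal A(\Omega)$ is carried along as an invariant of the iteration.

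First I would establish Galerkin convergence for a \emph{fixed} admissible control $u$. Since \cref{GHJB} is linear in $V$, the residual system \cref{Eq:residual} reduces to a linear system $A_n\mathbf c_n=\mathbf b_n$ with $(A_n)_{ij}=\langle -\mu\phi_j+\tfrac{\partial\phi_j}{\partial x}\cdot u,\,\phi_i\rangle$ and $(\mathbf b_n)_i=-\langle f+\tfrac\epsilon2|u|^2,\,\phi_i\rangle$; Assumption~\ref{ass_convergence}\ref{new} guarantees that $A_n$ is nonsingular, so $\mathbf c_n$ is well defined. Writing the exact solution as $V=\sum_{i\ge1}c_i\phi_i$ (legitimate by Assumption~\ref{ass_convergence}\ref{3} and \ref{5}), the goal is to show $V_n\to V$ and $DV_n\to DV$ in $L^2(\Omega)$. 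The workhorses are Assumption~\ref{ass_convergence}\ref{4} (closure of the relevant products within $\operatorname{span}\{\phi_i\}$, making the projected system consistent), \ref{c_bdd} (uniform boundedness of the coefficients), and crucially \ref{6}, the \emph{pointwise decreasing} condition, which is precisely what lets one interchange the infinite expansion with the $L^2$ inner products and conclude that the truncated coefficients converge to the true ones. This step mirrors \cite[Theorem 4.2]{beard1998approximate}.

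Next I would treat admissibility and the outer iteration. For admissibility, $V_n^{(m)}$ is a finite combination of continuously differentiable basis functions, so $u_n^{(m+1)}=-\tfrac1\epsilon DV_n^{(m)}$ is continuous on $\Omega$; since $\Omega$ is compact and $f$ is bounded there by Assumption~\ref{ass_convergence}\ref{1}, the running cost is bounded along trajectories kept in $\Omega$, and the discount $\mu>0$ renders $\mathcal J(u_n^{(m+1)},x)\le\tfrac1\mu\sup_\Omega\bigl(f+\tfrac\epsilon2|u_n^{(m+1)}|^2\bigr)<\infty$, whence $u_n^{(m+1)}\in\mathcal A(\Omega)$ — this is exactly where the discount simplifies the stabilisability requirements of the undiscounted theory. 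For the iteration error, the exact successive approximation is a Newton-type scheme for \cref{HJB}: starting from the admissible $u^{(0)}$ of Assumption~\ref{ass_convergence}\ref{2}, the values $V^{(m)}$ form a monotone sequence converging uniformly, hence in $L^2(\Omega)$, to $V^*$, so I may fix $M$ with $\|V^{(m)}-V^*\|_{L^2(\Omega)}<\delta/2$ for all $m>M$, \emph{independently of} $n$.

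Finally I would combine the two estimates by induction on $m$: assuming $u_n^{(m)}$ is admissible and approximates $u^{(m)}$ to within the Galerkin tolerance, the fixed-control result yields $V_n^{(m)}\approx V^{(m)}$, and differentiation yields $u_n^{(m+1)}\approx u^{(m+1)}$, closing the induction. I expect the main obstacle to be the \emph{coupling} of the two limits: the Galerkin error $\|V_n^{(m)}-V^{(m)}\|_{L^2(\Omega)}$ must be made small \emph{uniformly in $m$} (for all $m>M$), even though each iterate is built from the approximate control $u_n^{(m)}$ rather than the exact $u^{(m)}$, so that discretisation errors do not accumulate across iterations. Controlling this requires that the approximate controls $\{u_n^{(m)}\}$ remain within a bounded, equicontinuous family on which the Galerkin convergence of the second step is uniform — a uniformity underwritten by the contraction of the policy iteration near its fixed point together with $u^{(m)}\to u^*$ and Assumption~\ref{ass_convergence}\ref{c_bdd},\ref{6}. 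Once the uniform bound $\sup_{m>M}\|V_n^{(m)}-V^{(m)}\|_{L^2(\Omega)}<\delta/2$ is secured for $n>N$, the triangle inequality gives $\|V_n^{(m)}-V^*\|_{L^2(\Omega)}<\delta$, completing the argument.
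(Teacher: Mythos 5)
Your proposal follows essentially the same route as the paper: the identical triangle-inequality split into an iteration error (handled by the monotone policy-iteration convergence of Proposition \ref{partA}, the paper's Part A) and a Galerkin error (handled by fixed-control Galerkin convergence plus induction on $m$, the paper's Part B, Theorem \ref{thm:unif_conv_V_u}), with admissibility secured through the discount factor exactly as you argue. The coupling you flag in your final paragraph is the same issue the paper addresses by introducing the ancillary solution $\hat{V}^{(m)}$ (the exact GHJB solution for the \emph{approximate} control $u_n^{(m)}$) and splitting $|V_n^{(m)}-V^{(m)}|_{L^2(\Omega)}\le|V_n^{(m)}-\hat{V}^{(m)}|_{L^2(\Omega)}+|\hat{V}^{(m)}-V^{(m)}|_{L^2(\Omega)}$, although note that the paper's convergence in $n$ is established for each fixed $m$ rather than uniformly in $m$, so your concern about uniformity is, if anything, treated more explicitly in your plan than in the paper itself.
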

\begin{proof}
    See \cref{sec:appx_vi}.
\end{proof}
 
\paragraph{Tuning the initialization}
Ensuring closed-loop stability via  stabilizing feedback controls is a fundamental and well-studied topic in control theory. The convergence of \Cref{alg1} requires selecting an initial control $u^{(0)}:\Omega\rightarrow\mathbb{R}^d$, such that the resulting closed-loop system $\dot{y}=$ $u^{(0)}(y(t;x,u^{(0)}))$, $y(0)=x$,
is asymptotically stable in the sense of Lyapunov (see \cite[Chapter 4]{evans1983introduction}) for every initial conditions $x\in\Omega$.  Here  $y(t;x,u^{(0)})$ denotes the solution to the system \eqref{control} with $u^{(0)}$. However, selecting an appropriate stabilizing control $u^{(0)}$ can be a challenge in practice. A common approach is to exploit the 
nature of the infinite horizon problem and tune instead the discount factor. This is presented in \cref{alg2} as an outer loop for \cref{alg1} inspired from optimal feedback stabilization problems.  For a sufficiently large $\mu$, the successive approximation algorithm can be initialized in \cref{alg2} with $u^{(0)}=0$ (see \cite{Kalise_2018}).
\begin{remark}
The initial control $u^{(0)}(\cdot) \equiv 0$ is admissible. Since $\dot{y} \equiv 0$ and $y(0)=x$, it is clear that  $\mathcal{J}\left(u^{(0)}(\cdot), x\right)=\int_0^{\infty} e^{-\mu t} f\left(x\right) d t<\infty$. It is also worth noting that as $u^{(0)}(\cdot) \equiv 0$, the first iteration yields the GHJB equation
$$
-\mu V+f=0,
$$
which implies that the updated control $u^{(1)}=-\frac{1}{\mu\epsilon} \nabla f$, therefore, at the beginning of the successive approximation algorithm, the feedback law yields a gradient descent update for  the objective $f$.
\end{remark}
\begin{algorithm}
\small
\begin{algorithmic}[1]
\STATE $\textbf{Given:} \mu>0, tol_\mu>0, \theta\in(0,1),$ fix $\Omega,\;\Phi_n$.
\STATE  \textbf{Input} $u^{(0)}=0$  \hfill \% initialization
\STATE \textbf{While } $\mu> tol_\mu$ \textbf{ do } \hfill \% Outer loop
\STATE \qquad\textbf{Obtain} $(V_n^{(m)},u_n^{(m+1)})$ by\cref{alg1} initialized with $u^{(0)}$.\hfill \% Inner loop
\STATE \qquad\textbf{Update} $u^{(0)}=u_n^{(m+1)}$, $\mu=\theta\mu$
\STATE \textbf{End While} 
\STATE \textbf{Return} $(V^*,u^*)=(V_n^{(m)},u_n^{(m+1)})$ 
\hfill \% output
\end{algorithmic}
\caption{}
\label{alg2}
\end{algorithm}
Although solving the HJB equation, particularly for high-dimensional dynamics, may overwhelmingly expensive, it is important to note that 
this is an offline phase to be performed once. Once solved, the resulting optimal feedback control can be introduced into the controlled-CBO framework. In the following, we focus on the assembly and solution of the Galerkin residual equations \eqref{Eq:residual} in high dimensions.

\subsection*{Building a global polynomial basis for the value function}
For $i=1,\ldots,$$n$, $j=1,\ldots,d$, let $\phi_i^j:\mathbb{R}\rightarrow\mathbb{R}$ denote a one-dimensional polynomial basis of $\mathcal{L}^2(\Omega_j)$, where $\Omega=\bigotimes_{j=1}^d\Omega_j$. For the sake of simplicity, we consider a  monomial basis, but the idea extends to any orthogonal basis, e.g.  Legendre polynomials, where each basis element of $\Phi_n:=\left(\phi_1(x), \ldots, \phi_n(x)\right)$ admits a separable representation, i.e.,
$$
\phi_i(x)=\prod_{j=1}^d \phi_i^j\left(x_j\right)=\prod_{j=1}^d x_j^{r_i^j} \text{ with } \textbf{r}_i=(r_i^1,r_i^2,\cdots,r_i^d)\in\mathbb{N}_0^d.
$$
We generate $\Phi_n$ as a subset of the $d$-dimensional tensor product of $1$-dimensional polynomial basis with maximum total degree $M$:
$$
\Phi^{TD}_n:=\left\{\{\phi_i(x)\}_{i=1}^n\mathrel{\bigg|}\phi_i(x)=\prod_{j=1}^d \phi_i^j\left(x_j\right)=\prod_{j=1}^d x_j^{r_i^j}, \;\sum_{j=1}^d r_i^j \leq M\right\}.
$$
The separability of the basis reduces the computational complexity of assembling the  Galerkin residual equations \cref{Eq:residual}. However, the use of a total degree basis only partially circumvents the curse of dimensionality, as the cardinality of $\Phi^{TD}_n$ grows combinatorial with $M$ and $d$, limiting its applicability to $d\lesssim20$. Alternatively, we also consider a \emph{hyperbolic cross polynomial basis} \cite{azmi2021optimal}, defined by
$$
\Phi^{HC}_n:=\left\{\{\phi_i(x)\}_{i=1}^n\mathrel{\bigg|}\phi_i=\prod_{j=1}^d \phi_i^j\left(x_j\right)=\prod_{j=1}^d x_j^{r_i^j}, \;\prod_{j=1}^d\left(r_i^j+1\right) \leq J+1\right\},
$$
where $J$ is the maximum degree of the basis.
Compared to the  polynomial basis truncated by total degree, the hyperbolic cross basis scales better for high-dimensional problems.  More specifically, in this paper we include numerical results up to $d=30$, with the maximum degree of basis functions $4$. 
In such case, the full multidimensional basis would contain $1.52\times 10^{18}$ elements, the basis truncated by total degree $M=4$ contain $46374$ elements, while the hyperbolic cross basis with  $J=4$ has only $555$ elements. 

\subsection*{High-dimensional integration}
Recall that given $u_n^{(m)}=-\frac{1}{\epsilon} \nabla\Phi_n^\top \mathbf{c}_n^{(m)}$ at the $m$-th iteration, we solve the GHJB equation 
\begin{equation*}
\left\langle-\mu V_n^{(m+1)}+\left(D V_n^{(m+1)}\right)^\top u_n^{(m)}+f+\frac{\epsilon}{2}|u_n^{(m)}|^2, \Phi_n\right\rangle=\mathbf{0}_n
\end{equation*}
for $V_n^{(m+1)}=\Phi_n^\top \mathbf{c}_n^{(m+1)}$, which leads to a linear system for $\mathbf{c}_n^{(m+1)}$ depending on $\mathbf{c}_n^{(m)}$:
$$
\left(-\mu\mathbf{M}+\mathbf{G}\left(\mathbf{c}_n^{(m)}\right)\right) \mathbf{c}_n^{(m+1)}=-\mathbf{F}-\mathbf{L}(\mathbf{c}_n^{(m)}).
$$
Note that $\mathbf{0}_n$ denotes a  vector consisting $n$ entries, all of which are zero. 
 We follow the steps in \cite[section~3.3,4.2]{Kalise_2018} to expand the different terms in the GHJB equation. Note that 
$$
\left\langle -\mu V_n^{(m+1)}, \Phi_n\right\rangle=-\mathbf{M}\, \mathbf{c}_n^{(m+1)}, 
$$
where $\mathbf{M} \in \mathbb{R}^{n \times n},$ with $\mathbf{M}_{(i, j)}=\mu\left\langle\phi_i, \phi_j\right\rangle =\mu\prod_{p=1}^d \int_{\Omega_p} \phi_i^p\left(x_p\right) \phi_j^p\left(x_p\right) d x_p$. Then
$$
\begin{aligned}
\left\langle \left(D V_n^{(m+1)}\right)^\top  u_n^{(m)}, \Phi_n\right\rangle & =\mathbf{G}\left(\mathbf{c}_n^{(m)}\right) \mathbf{c}_n^{(m+1)}, \quad \mathbf{G} \in \mathbb{R}^{n \times n}, \\
\mathbf{G}_{(i, j)}\left(\mathbf{c}_n^{(m)}\right)
&:=-\frac{1}{\epsilon} \sum_{k=1}^n c_k^{(m)}\sum_{p=1}^d\tilde{\mathbf{U}}_{(i,j,k,p)},
\end{aligned}
$$
where  $\tilde{\mathbf{U}} \in \mathbb{R}^{n \times n \times n \times d }$ is given by
\begin{equation*}
\begin{aligned}
&\tilde{\mathbf{U}}_{(i,j,k,p)}:=\left\langle(\partial_{x_p} \phi_k)( \partial_{x_p} \phi_j), \phi_i\right\rangle\\
& =\left(\prod_{\substack{q=1\\ 
q \neq p}}^d \int_{\Omega_q} \phi_i^q(x_q)\phi_j^q(x_q) \phi_k^q(x_q) d x_q\right)\left(\int_{\Omega_p} \phi_i^p(x_p)  \left(\partial_{x_p}\phi_j^p(x_p) \right)\left(\partial_{x_p} \phi_k^p(x_p)\right) d x_p\right) .
\end{aligned}
\end{equation*}
For $\left\langle\frac{\epsilon}{2}|u_n^{(m)}|^2, \Phi_n\right\rangle$ note that
$$
\begin{aligned}
\frac{\epsilon}{2}|u_n^{(m)}|^2
&=\frac{1}{2\epsilon}\sum_{p=1}^d\left(\sum_{j=1}^n c_j^{(m)}\partial_{x_p}\phi_j\right)\left(\sum_{k=1}^n c_k^{(m)}\partial_{x_p} \phi_k\right)
\end{aligned}
$$
leading to
$$
\left\langle\frac{\epsilon}{2}|u_n^{(m)}|^2, \Phi_n\right\rangle=\frac{1}{2\epsilon} \left(\mathbf{c}^{(m)}\right)^\top  \sum_{p=1}^d\tilde{\mathbf{U}}_{(\bullet,p)}\mathbf{c}^{(m)}:=\mathbf{L}(\mathbf{c}^{(m)}).
$$
Finally, we  discuss the computation of $\left\langle f(x), \Phi_n\right\rangle$ separately for the case of the objective function $f:\mathbb{R}^d\rightarrow\mathbb{R}$ being separable or not.
If $f$ is separable, we suppose there exists a tensor-valued function $\mathcal{F}:\mathbb{R}^d\rightarrow\mathbb{R}^{n_f\times d}$ such that
$$f(x)=\sum_{j=1}^{n_f} \prod_{p=1}^d \mathcal{F}_{( j, p)}\left(x_p\right),$$
where $n_f\in\mathbb{N}$ is called separation rank. 
Then, $$
\left\langle f(x), \Phi_n\right\rangle=\sum_{j=1}^{n_f}\left\langle\left(\prod_{p=1}^d \mathcal{F}_{(j, p)}(x_p)\right), \Phi_n\right\rangle=:\mathbf{F},
$$
where
$\mathbf{F}_{(i)}=\sum_{j=1}^{n_f}\left(\prod_{\substack{p=1}}^d \int_{\Omega_p} \mathcal{F}_{(j, p)}(x_p) \phi_i^p \left(x_p\right) d x_p\right).
$ 

If the function $f$ is non-separable, we can apply a direct Monte Carlo method using $N_{mc}\in\mathbb{N}$ uniform samples $\{\bar{x}_q\}_{q=1}^{N_{mc}}$. Then $$\mathbf{F}_{(i)}:=\left\langle f(x), \phi_i\right\rangle=|\Omega|\frac{1}{N_{mc}}\sum_{q=1}^{N_{mc}}f(\bar{x}_q)\phi_i(\bar{x}_q),$$
 where $\bar{x}_q=(\bar{x}_{q,1},\ldots,\bar{x}_{q,d})\in\mathbb{R}^d$ and $\phi_i(\bar{x}_q)=\prod_{p=1}^d\phi_i^p(\bar{x}_{q,p})$. Although this is a one-time, offline computation,  more advanced Monte Carlo methods could be employed to improve accuracy or efficiency, e.g.,  Importance Sampling \cite{tokdar2010importance}, Sequential Monte Carlo (SMC) \cite{del2006sequential}, or Markov Chain Monte Carlo (MCMC) \cite{geyer1992practical}. However, in this work we opted for the simplest possible approach to maintain clarity and focus on the core contribution.
\section{The controlled-CBO methodology}
\label{control_CBO}
Having computed an optimal feedback law approximately steering a single particle towards the global minimizer of $f$, we now turn our attention to the controlled-CBO algorithm given by the dynamics \cref{eq:cbo_intro}. In \cite{totzeck2020consensusbased} the CBO methodology  is enhanced 
by switching on or off the drift part of the SDE according to whether the particle has a lower function value compared with the particle mean. We apply this idea to the feedback control term and propose a system of $N \in \mathbb{N}$ interacting particles $\{X_t^i\}_{i=1}^N$, which evolves in time with respect to a system of SDEs given by
  \begin{equation}
d X_t^i=[-\tilde{\lambda}(t, X_t^i)\left(X_t^i-v_\alpha(\rho_t^N)\right)+\tilde{\beta}(t, X_t^i) u^*(X_t^i)]d t+\sigma \operatorname{Diag}(X_t^i-v_\alpha(\rho_t^N)) d W_t^i,   
\label{SDE_controlled}
\end{equation}
$$
\begin{aligned}
& \tilde{\lambda}(t, X_t^i)=\lambda\mathrm{H}\left(f(X^i_t)-f(v_\alpha(\rho_t^N))\right), \\
& \tilde{\beta}(t, X_t^i)=\beta\mathrm{H}\left(f(X^i_t)-f^{approx}(X^i_t)\right),
\end{aligned}
$$
\begin{equation*}
    f^{approx}(x)=\sum_{j=1}^n a_j\phi_j(x), 
\end{equation*}
with $H(x):= \begin{cases}1, & x \geq 0 \\ 0, & x<0\end{cases}$ being the Heaviside function, $\lambda>0$ and $\beta>0$ constant parameters. The polynomial approximation $f^{approx}$ is a projection of the function $f$ onto $\Phi_n$. Let   $\boldsymbol{b}=\left(\left\langle\phi_1, f\right\rangle, \left\langle\phi_2, f\right\rangle, \ldots, \left\langle\phi_n, f\right\rangle\right)^{\top}$ and define the mass matrix $G\in\mathbb{R}^{n\times n}$ by 
$G_{p,q}=\left\langle\phi_p, \phi_q\right\rangle$, for $p,q=1,\cdots,n $. Then the coefficient vector $\mathbf{a}=\left(a_1, a_2, \ldots, a_n\right)^{\top}$ is determined by the linear system $G\boldsymbol{a}=\boldsymbol{b}.$
In the special case $\Phi_n$ is an orthogonal basis, the  matrix $G$ is diagonal, and the coefficients simplify to $a_j=\frac{\langle\phi_j(x),f(x)\rangle}{\langle\phi_j(x),\phi_j(x)\rangle}$, $j=1,\cdots,n$.
 We assume that the initial condition of the particles $X_0^i \in \mathbb{R}^d$  are independent and identically distributed with $\operatorname{law}\left(X_0^i\right)=\rho_0 \in \mathcal{P}\left(\mathbb{R}^d\right)$.  The consensus point $v_\alpha(\rho_t^N)$ is calculated as before by the weighted average presented in \eqref{v_f}-\eqref{weight}.

The first term in drift of the model \cref{SDE_controlled} directs the particle system towards a  consensus point. If the objective function value at weighted average location $v_\alpha(\rho_t^N)$ is lower than the function value at $X_t^i$, 
the particle moves towards the consensus point $v_\alpha(\rho_t^N)$ and the strength is given by the distance $\lambda\left|X_t^i-v_\alpha(\rho_t^N)\right|$.
The second component in the drift term uses information directly from the objective function. It  
provides a possible descent towards a
global minimum of the approximated objective function. The particles compare the original objective function value $f(\cdot)$ and approximated function  value $f^{approx}(\cdot)$ at current position $X_t^i$, if the approximated   value $f^{approx}(X_t^i)$ is lower,  the control term is active;  otherwise, the system reduce to CBO. We compare with the polynomial approximation to $f$, since $u^*$ is computed on this basis. As the number of elements $n$ allowed in the basis $\Phi_n$ increases, we expect a corresponding enhancement in 
 the accuracy of the polynomial approximation. Consequently, the  control $u^*$ is expected to have higher accuracy in finding the descent towards the true global minimizer of the objective function. As before, the diffusion term introduces randomness to explore the search space. Note that the standard CBO is recovered by setting
$
\tilde{\lambda}(t, X) \equiv \lambda$ and $\tilde{\beta}(t, X) \equiv 0 .
$  

The success of CBO methods in addressing non-convex and non-smooth optimization problems stems from the fact that it can be interpreted as stochastic relaxations of gradient-based methods, but relying solely on evaluations of the  objective function \cite{riedl2023gradient}. As the number of optimizing particles tends to infinity, the CBO implements a convexification of a rich class of functions and reveals a similar behaviour to stochastic gradient decent \cite{fornasier2024consensus}. 
Since the control signal is derived from the gradient of the value function associated with a control problem depending on the original objective, it can be viewed as a ``gradient-like''  global information of the objective function, which is independent 
of the size of particle system and its initialization. Even when the initial positions of the particles lie in unfavorable regions, the control can steer them toward better regions where the CBO dynamics can be more effective and robust with respect to the choice of parameters such as $\sigma$ and $N$. 

\subsection*{Well-Posedness of the controlled-CBO dynamics and mean-field behaviour}
\label{wellpose}
In this section, we extend the well-posedness results in \cite{carrillo2018analytical} to a simplified version of \cref{SDE_controlled}, i.e. the dynamics in \eqref{eq:cbo_intro} as well as the associated Fokker-Planck equation.
In our proposed methodology for the controlled-CBO model \cref{SDE_controlled}, the Heaviside functions imposed on $\tilde{\beta}$ and $\tilde{\lambda}$ aim to circumvent issues arising from insufficient accuracy of the HJB approximation and
mitigate concentration in local minimal points, respectively. For simplicity, we analyse the case without Heaviside functions. We also note that even in practise the numerical performance of these simplified controlled dynamics improve performance relative to standard CBO. 

Before stating the result, let us introduce notations. The space of continuous functions $f: \mathcal{X} \rightarrow \mathcal{Y}$ is denoted by $\mathcal{C}(\mathcal{X}, \mathcal{Y})$, where $\mathcal{X} \subset \mathbb{R}^n$, $n\in\mathbb{N}$ and 
$\mathcal{Y}$ is a suitable  topological space. Further,  $\mathcal{C}_c^k(X, Y)$ and $\mathcal{C}_b^k(X, Y)$ 
denote continuous function spaces in which  functions  are $k$-times continuously differentiable, have compact support and are bounded, respectively. The main objects of study are laws of stochastic processes $\rho \in \mathcal{C}\left([0, T], \mathcal{P}\left(\mathbb{R}^d\right)\right)$. A fixed measure $\varrho \in \mathcal{P}\left(\mathbb{R}^d\right)$  is said to belong to $\mathcal{P}_p\left(\mathbb{R}^d\right)$, for $1 \leq p<\infty$, if it has finite $p$-th moment $\int |x|^p d \varrho(x)$.

We will consider the mean field dynamics of the particle system in \eqref{eq:cbo_intro} and the corresponding nonlinear Markov process when all particles are initialized independently with $\text{law}(X_0^i)=\rho_0$. 
As $N$ increases we expect the particles to cover more densely the state space and the method to perform better. From a theoretical standpoint, it is important to characterize and establish the well-posedness of the limiting mean-field regime.
When $N\rightarrow\infty$, propagation of chaos dictates that processes $X_t^i$ will behave as independent,  and we may obtain a limiting consensus point
$$
v_\alpha( \rho_t^N):=\frac{1}{\sum_{i=1}^N\omega_f^\alpha\left(X_t^i\right)}\sum_{i=1}^N X_t^i \omega_f^\alpha\left(X_t^i\right) \rightarrow \frac{1}{\int_{\mathbb{R}^d} \omega_f^\alpha d \rho_t} \int_{\mathbb{R}^d} x\; \omega_f^\alpha d \rho_t:=v_\alpha(\rho_t)\,,
$$
see \cite{oelschlager1984martingale}. Here, $\rho_t \in \mathcal{P}\left(\mathbb{R}^d\right)$ is a Borel probability measure describing the single particle
distribution resulting from the mean-field limit, which is assumed to be absolutely continuous with respect to the Lebesgue measure $d x$. 

We begin by defining weak solutions of the (nonlinear) Fokker-Planck equation.
\begin{definition}
\label{weak_def}
For fixed parameters $\lambda,\beta,\sigma>0.$
 We say $\rho \in \mathcal{C}\left([0, T], \mathcal{P}\left(\mathbb{R}^d\right)\right)$ satisfies the following Fokker Planck equation 
$
\partial_t \rho_t= \nabla\cdot\left[\left(\lambda (x-v_\alpha(\rho_t))-\beta u^*(x)\right) \rho_t\right]\\+\frac{\sigma^2}{2}\sum_{k=1}^d \partial_{k k}\left(\operatorname{Diag}\left(x-v_\alpha\left(\rho_t\right)\right)_{k k}^2 \rho_t\right)
$
with initial condition $\rho_0 \in \mathcal{P}\left(\mathbb{R}^d\right)$ in the weak sense, if we have for all $\phi \in \mathcal{C}_c^{\infty}\left(\mathbb{R}^d\right)$ and all $t \in(0, T)$

\begin{align}
\frac{d}{d t} \int \phi(x) d \rho_t(x)&=-\lambda \int \langle x-v_\alpha\left(\rho_t\right), \nabla \phi(x)\rangle d \rho_t(x)+\beta\int \langle u^*(x), \nabla \phi(x)\rangle d \rho_t(x)\nonumber\\ &+\frac{\sigma^2}{2} \int \sum_{k=1}^d \operatorname{Diag}\left(x-v_\alpha\left(\rho_t\right)\right)_{k k}^2 \partial_{k k}^2 \phi(x)d \rho_t(x)
\label{FP}
\end{align}
and $\lim _{t \rightarrow 0} \rho_t=\rho_0$ pointwise.

\end{definition}
Throughout the section, we assume that $f$ and $u^*$ satisfy the following assumptions:
\begin{assumption}
\begin{enumerate}
    \item Assume the objective function is bounded from below and
there exists $x^* \in \mathbb{R}^d$ such that $f\left(x^*\right)=\min _{x \in \mathbb{R}^d} f(x)=: \underline{f}$.
\item  There exist constants $L_f,\,c_f>0$ such that
$$
\begin{cases}|f(x)-f(y)| \leq L_f(|x|+|y|)|x-y| & \text { for all } x, y \in \mathbb{R}^d \\ f(x)-\underline{f} \leq c_f\left(1+|x|^2\right) & \text { for all } x \in \mathbb{R}^d\end{cases}
$$
\item There exists a constant $L_u>0$ such that $|u^*(x)-u^*(y)| \leq L_u|x-y|$ for all $ x, y \in \mathbb{R}^d$.
\end{enumerate}
\label{assumption}
\end{assumption}
\cref{assumption} is standard for the mean-field analysis of CBO methods, see \cite{carrillo2018analytical,fornasier2024consensus}. The global Lipschitz continuity of the control signal $u^*$ directly implies a linear growth in $x$, i.e., there exists a constant $c_u>0$ such that for all $x\in \mathbb{R}^d$,
$|u^*(x)|^2 \leq c_u\left(1+|x|^2\right)$. While this condition may not be trivial in control theory, in practice, it can be easily checked by computing the gradient of the basis functions $\{\frac{\partial \phi_i}{\partial x}\}_i$. 
Also, this assumption is satisfied by a broad class of non-convex objective functions whose approximated value function over a compact domain is quadratic.  In the supplementary material \ref{appendix: qua} we verify \cref{assumption} for $f$ being a quadratic function. 

The main result of this section is provided by the following theorem.
\begin{theorem}
Let $f$ and $u^*$ satisfy \cref{assumption}, and $\rho_0 \in \mathcal{P}_4\left(\mathbb{R}^d\right)$. Then, there exists a unique nonlinear process $\bar{X} \in \mathcal{C}\left([0, T], \mathbb{R}^d\right), T>0$, satisfying
\begin{equation}
d \bar{X}_t=[-\lambda\left(\bar{X}_t-v_\alpha(\rho_t)\right)+\beta u^*(x)] d t+\sigma \operatorname{Diag}\left(\bar{X}_t-v_\alpha(\rho_t)\right) d W_t,
\label{sdee}
\end{equation}
$\rho_t=\operatorname{law}\left(\bar{X}_t\right)$ in the strong sense, and $\rho \in \mathcal{C}\left([0, T], \mathcal{P}_4\left(\mathbb{R}^d\right)\right)$ satisfies the corresponding Fokker Planck equation  in the sense of \cref{weak_def}.
\label{thm}
\end{theorem}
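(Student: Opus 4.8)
The plan is to establish well-posedness through a Banach fixed-point argument in the space $\mathcal{C}([0,T],\mathcal{P}_4(\mathbb{R}^d))$ equipped with a Wasserstein-type metric, following the framework of \cite{carrillo2018analytical} and accommodating the additional feedback term $\beta u^*$. The starting observation is that the only nonlocal, nonlinear dependence on the law in \cref{sdee} enters through the consensus point $v_\alpha(\rho_t)$; the control $u^*$, by \cref{assumption}, is globally Lipschitz and of linear growth. Hence, given a fixed flow $\rho\in\mathcal{C}([0,T],\mathcal{P}_4(\mathbb{R}^d))$, I would freeze $v_\alpha(\rho_t)$ and consider the resulting SDE
$$
d\bar X_t = [-\lambda(\bar X_t - v_\alpha(\rho_t)) + \beta u^*(\bar X_t)]\,dt + \sigma\operatorname{Diag}(\bar X_t - v_\alpha(\rho_t))\,dW_t,
$$
whose drift and diffusion coefficients are now globally Lipschitz and of linear growth in $\bar X_t$, once $t\mapsto v_\alpha(\rho_t)$ is shown to be bounded. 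Standard SDE theory then yields a unique strong solution, and I define the map $\mathcal{T}(\rho)_t := \operatorname{law}(\bar X_t)$. A fixed point of $\mathcal{T}$ is precisely the sought nonlinear process.

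The first technical ingredient is a uniform fourth-moment bound. Applying It\^o's formula to $|\bar X_t|^4$ and taking expectations, the drift contributes terms controlled by $\lambda$, the linear growth of $u^*$ from \cref{assumption}, and $|v_\alpha(\rho_t)|$, while the multiplicative diffusion $\operatorname{Diag}(\bar X_t - v_\alpha(\rho_t))$ produces terms scaling like $|\bar X_t|^2|\bar X_t - v_\alpha(\rho_t)|^2$. To close the estimate I would first bound $|v_\alpha(\rho_t)|$ by the second moment of $\rho_t$ — which uses the inequality $f(x)-\underline f \le c_f(1+|x|^2)$ so that the weights $\omega_f^\alpha$ cannot concentrate mass at infinity — and then apply Gr\"onwall to obtain $\sup_{t\in[0,T]}\mathbb{E}|\bar X_t|^4 <\infty$ whenever $\rho_0\in\mathcal{P}_4(\mathbb{R}^d)$. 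This guarantees that $\mathcal{T}$ maps $\mathcal{C}([0,T],\mathcal{P}_4(\mathbb{R}^d))$ into itself; the multiplicative nature of the noise is precisely why the fourth (rather than second) moment is the natural working space.

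The second ingredient, and the crux of the argument, is the stability of the consensus point: I would invoke the local Lipschitz estimate of \cite{carrillo2018analytical}, namely that on any set of measures with uniformly bounded fourth moments there is a constant $C$ with $|v_\alpha(\rho)-v_\alpha(\hat\rho)|\le C\,W_2(\rho,\hat\rho)$. Given two flows $\rho,\hat\rho$, I would solve the two frozen SDEs driven by the \emph{same} Brownian motion and the same initial datum, apply It\^o to $|\bar X_t-\hat X_t|^2$, and estimate the drift difference using $\lambda$ together with the global Lipschitz constant $L_u$ of $u^*$, the consensus-point difference $|v_\alpha(\rho_t)-v_\alpha(\hat\rho_t)|$ via the stability estimate, and the diffusion difference via the Lipschitz structure of $\operatorname{Diag}(\cdot)$. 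A Gr\"onwall inequality then produces $\sup_{t\le T}W_2(\mathcal{T}(\rho)_t,\mathcal{T}(\hat\rho)_t)^2 \le C_T \sup_{t\le T}W_2(\rho_t,\hat\rho_t)^2$ with a constant $C_T$ that can be made strictly less than one on a sufficiently short horizon. Banach's fixed-point theorem then yields a unique fixed point on $[0,T_0]$, and concatenating these local solutions (the fourth-moment bound being uniform) extends existence and uniqueness to arbitrary $T$.

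Finally, to obtain the Fokker-Planck statement of \cref{weak_def}, I would apply It\^o's formula to $\phi(\bar X_t)$ for arbitrary $\phi\in\mathcal{C}_c^\infty(\mathbb{R}^d)$, take expectations against $\rho_t=\operatorname{law}(\bar X_t)$, and note that the martingale term vanishes, recovering exactly the weak form \cref{FP}; continuity in time and the attainment of the initial condition follow from the moment bounds and the continuity of $t\mapsto\bar X_t$. I expect the main obstacle to be the interaction between the nonlocal, nonlinear consensus point and the multiplicative noise: controlling $v_\alpha$ in $W_2$ requires the fourth-moment bounds, while closing those bounds in turn requires controlling $v_\alpha$, so the moment estimates and the stability estimate must be carried out in tandem on the invariant ball of $\mathcal{P}_4(\mathbb{R}^d)$. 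The feedback term, being globally Lipschitz with linear growth, enters all of these estimates only additively and does not alter the structure of the argument.
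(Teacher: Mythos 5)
Your proposal is correct in outline but follows a genuinely different fixed-point strategy than the paper. The paper does not contract on measure flows: it defines the map $\mathcal{T}$ on continuous consensus curves, $\mathcal{T}\colon \mathcal{C}([0,T],\mathbb{R}^d)\to\mathcal{C}([0,T],\mathbb{R}^d)$, $v\mapsto v_\alpha(\operatorname{law}(\bar X^v))$, proves via the It\^o isometry and \cref{lemma3} that $t\mapsto v_\alpha(\rho_t)$ is H\"older-$1/2$, and invokes the \emph{Leray--Schauder} theorem (compact embedding $\mathcal{C}^{0,1/2}\hookrightarrow\mathcal{C}$ plus boundedness of the set $\{v=\tau\mathcal{T}v,\ \tau\in[0,1]\}$) to get existence globally on $[0,T]$ in one shot; uniqueness is then a separate synchronous-coupling/Gr\"onwall argument. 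You instead run a Banach contraction in $\sup_t W_2$ on $\mathcal{C}([0,T],\mathcal{P}_4(\mathbb{R}^d))$ over a short horizon and concatenate. Your route buys uniqueness for free and avoids compactness arguments entirely; the paper's route buys a horizon-free existence proof (no small-time restriction, no concatenation) at the cost of a compactness lemma and a separate uniqueness step. The core estimates (frozen-coefficient SDE theory, fourth moments, the stability estimate of \cref{lemma3}, It\^o for the Fokker--Planck identity) are the same in both.

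The one step you should not wave through is the parenthetical ``the fourth-moment bound being uniform'' in the concatenation. Your contraction constant and your invariant-ball radius both depend on $R$ through $m_0=m_0(\alpha,L_f,R)$ and through the bound on $|v_\alpha(\rho_t)|$; if the only control on $|v_\alpha(\rho)|$ in terms of moments is what follows from Jensen and the lower bound $\int e^{-\alpha f}\,d\rho\ \ge\ e^{-\alpha\underline f}e^{-\alpha c_f(1+\int|x|^2 d\rho)}$, then $|v_\alpha(\rho)|$ is controlled only \emph{exponentially} in the second moment, the admissible time-step shrinks rapidly as the ball radius grows, and the concatenated intervals need not exhaust an arbitrary $[0,T]$. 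What rescues this is an a priori, horizon-wide moment bound for any solution, which is exactly what the paper establishes in its boundedness-of-$\mathcal{V}$ step via the estimate $|v_\alpha(\rho_t)|^2\le e^{\alpha(\bar f-\underline f)}\,\mathbb{E}|\bar X_t|^2$ (a bound linear in the second moment) followed by Gr\"onwall. You need to derive this same a priori estimate first, conclude that every solution on $[0,T]$ stays in a fixed ball of $\mathcal{P}_4$, and only then run your contraction on that ball with a time-step that is uniform along the concatenation. With that ingredient added, your argument closes; without it, it only proves local-in-time well-posedness.
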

\begin{proof}
    See \cref{app_1}.
\end{proof}
As we indicated earlier, as $N\rightarrow\infty$ each $X_t^i$ in \eqref{eq:cbo_intro} is expected to become independent (with $i$) and converge weakly to the dynamics of $\bar{X}_t$ in \eqref{sdee}, see \cite{oelschlager1984martingale} for the case $\beta\equiv 0$. The well-posedness of \eqref{sdee} rigorously characterizes the dynamics of the best case scenario that uses infinite of particles, which is impossible in practice, but close to using a very large number of particles.

\section{Numerical experiments}
\label{numerical2}
In this section, we assess the convergence of the particle dynamics toward the point of uniform consensus at $x_*$ as well as the overall performance of our controlled-CBO algorithm compared to the standard CBO implementation. As performance measures we use the $2$-Wasserstein distance to a Dirac delta $\delta_{x_*}$:
\begin{equation}
W_2^2\left(\rho_t^N, \delta_{x_*}\right)=\int|x-x_*|^2d \rho_t^N(x)\quad \text { for } \quad \rho_t^N=\frac{1}{N} \sum_{i=1}^N \delta_{X_t^i},
   \label{distance} 
\end{equation}
and the particle system variance, which is computed according to $$\operatorname{Var}\left(\rho_t^N\right):=\frac{1}{2} \int\left|x-\mathbb{E}\left(\rho_t^N\right)\right|^2 d \rho_t^N(x), \quad\mathbb{E}\left(\rho_t^N\right)=\int x \,d\rho_t^N(x). $$
As benchmark problems we will consider
 the $d$-dimensional Ackley function \cite{ackley2012connectionist}:$$f_A(\mathbf{x})=-20 \exp \left(-0.2 \sqrt{\frac{1}{d} \sum_{i=1}^d x_i^2}\right)-\exp \left(\frac{1}{d} \sum_{i=1}^d \cos \left(2\pi x_i\right)\right)+21+\exp (1)$$
and the $d$-dimensional Rastrigin function \cite{Rastrigin}:
$$f_R(\mathbf{x})=10 (d+1)+\sum_{i=1}^d\left[x_i^2-10 \cos \left(2 \pi x_i\right)\right].$$
Both benchmarks have global minimizer at $\mathbf{x}^*=(0, \ldots, 0)$ with $f_A\left(\mathbf{x}^*\right)=1$ and $f_R\left(\mathbf{x}^*\right)=10$. For each benchmark, we start by selecting a basis  $\Phi_n$ (with  cardinality $n$) and set
$
\Omega=[-2,2]^d,\,\epsilon=0.1, \;\mu=0.1,\;$ and $\theta=0.5$ to obtain the approximate optimal feedback law $u^*$. Then, we conduct numerical experiments of  the controlled-CBO model \cref{SDE_controlled}.
Recall that the polynomial approximation of the objective function $f^{approx}$ is computed by projecting $f$ onto the basis $\Phi_n$. 
The particle dynamics \cref{SDE_controlled} are discretized using the Euler–Maruyama scheme.
In the following, the parameters for the particle simulation are chosen as
$
d t=10^{-1}, \;\alpha=40,\;\sigma=0.7,\;\beta=1,\;\lambda=1,\;T=10.
$
The particle simulation is stopped once the final time $T =10$ is reached.
If $\tilde{\beta}\equiv 0$, the setting \cref{SDE_controlled} recovers the standard CBO.  Regarding the choice of polynomial basis, although the overall performance of the feedback control is generally insensitive to the basis selection \cite{azmi2021optimal, kalise2020robust, Kalise_2018}, high-degree expansions using a full monomial basis may suffer from ill-conditioning of mass matrix, potentially affecting numerical stability and accuracy. In our context, however, 
the approximation of value function is not intended to recover every subtle feature of the objective. Instead, a low-resolution surrogate, computed at moderate cost, is sufficient to indicate a favorable region near the true solution, allowing the associated feedback control to guide the CBO dynamics effectively. For these reasons,  both monomial and Legendre bases yield comparably effective results in the numerical tests  presented below. \textsc{MATLAB} codes for controlled CBO and the numerical tests presented in this paper are available in the GitHub repository: \url{https://github.com/AmberYuyangHuang/ControlledCBO}.

\subsection{2-dimensional benchmark problems}
\label{sec:2dnumerical}
In this section for each benchmark problem we consider a favourable and a non-favourable initialization distribution of the particle system separately, where in one case the  global minimum is contained in the initial distribution support and in the other it is not.

\subsubsection{Ackley function}
We consider monomial basis with  total degree  $M=4$ for approximating the value function.
First, we consider an advantageous initialization where the particle system are initially equidistantly distributed on $[-1,0.5]^2$. This includes the global minimizer of the Ackley function.

Figure \ref{1a} shows the evolution of $\operatorname{Var}\left(\rho_t^N\right)$ and $W_2^2(\rho_t^N,\delta_{x^*})$ for a different number of particles $N$ with initialization $\rho_0=\mathcal{U}[-1,0.5]^2$ in the standard CBO model. To obtain a visualization of how the particle system evolves,  Figure \ref{1b} depicts the trajectory of $N=50$ particles. The numerical results of the controlled-CBO are shown in Figure \ref{1c} and \ref{1d}. It can be seen that if the initialized distribution of particle system encloses the global minimizer, both algorithms exhibit  convergence to the global minimizer. Nevertheless, the controlled-CBO is notably faster and achieves a higher level of precision.
\begin{figure}[h!]
\centering
\captionsetup[subfigure]{aboveskip=-1pt,belowskip=-1pt}
  \begin{subfigure}[t]{0.45\textwidth}
\includegraphics[width=\textwidth]{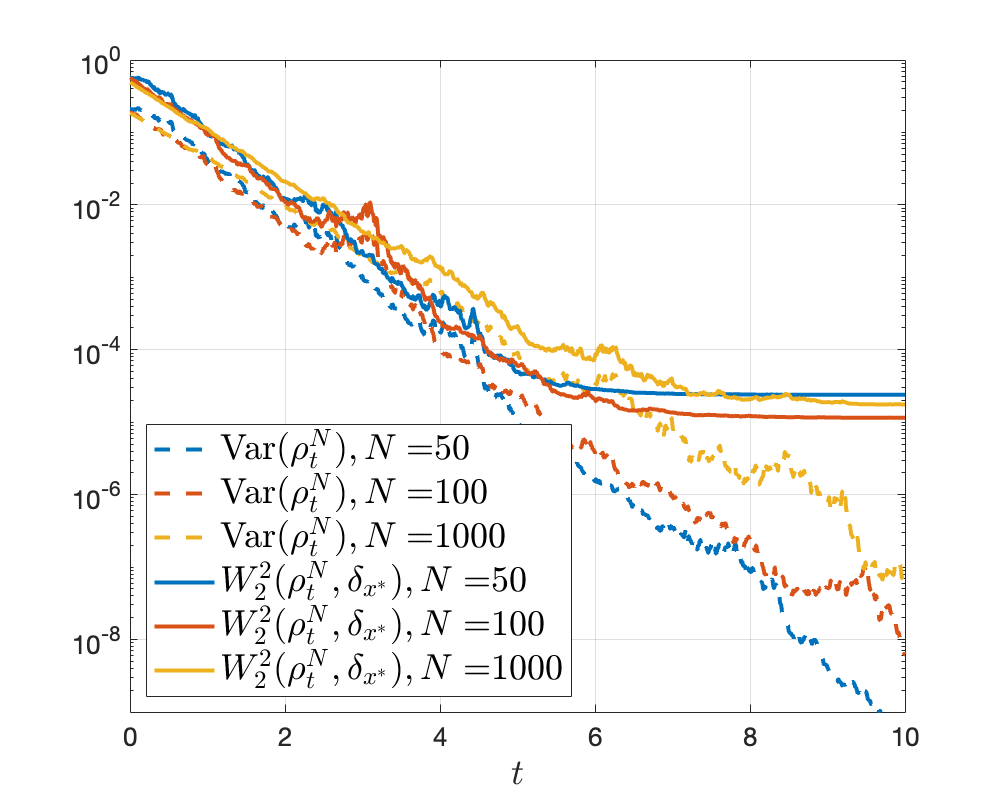}
  \caption{Evolution of the variance $\operatorname{Var}\left(\rho_t^N\right)$ and $W_2^2(\rho_t^N,\delta_{x^*})$ in standard CBO for different number of particles $N$}
  \label{1a}
  \end{subfigure}
\begin{subfigure}[t]
  {0.49\textwidth}
\includegraphics[width=\textwidth]{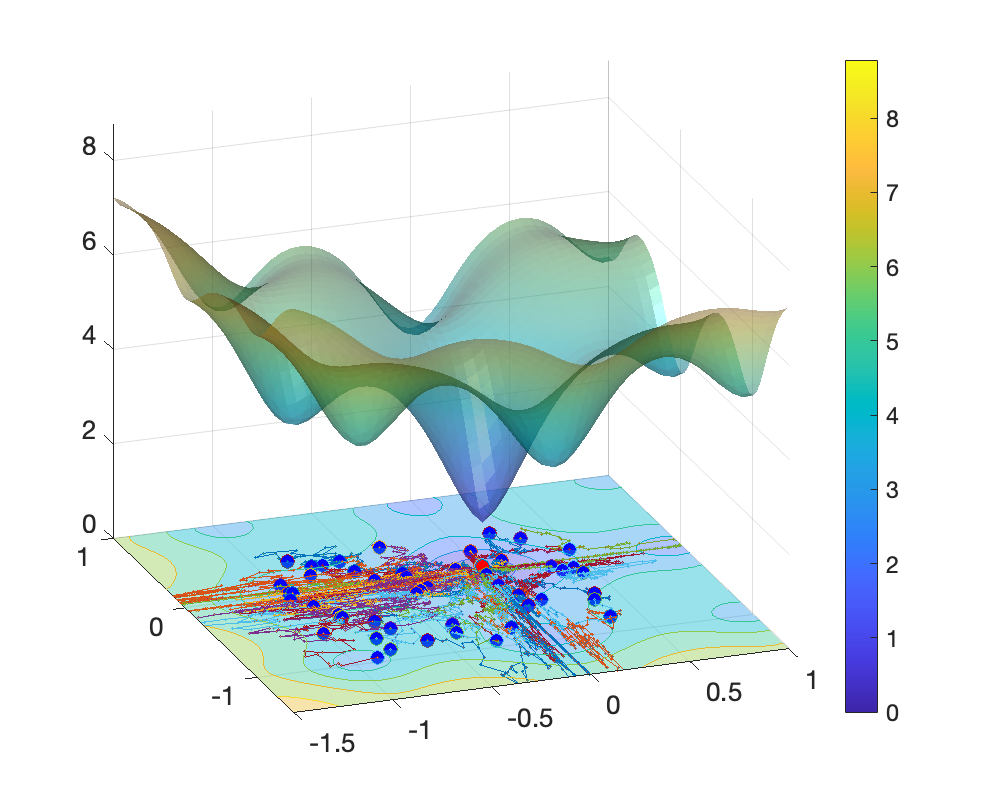}
  \caption{Trajectory of $N=50$ particles under the standard CBO, blue points are the initial position of particles and red points are the final position of particles at time $T=10.$}
  \label{1b}
  \end{subfigure}
\begin{subfigure}[b]
  {0.45\textwidth}
\includegraphics[width=\textwidth]{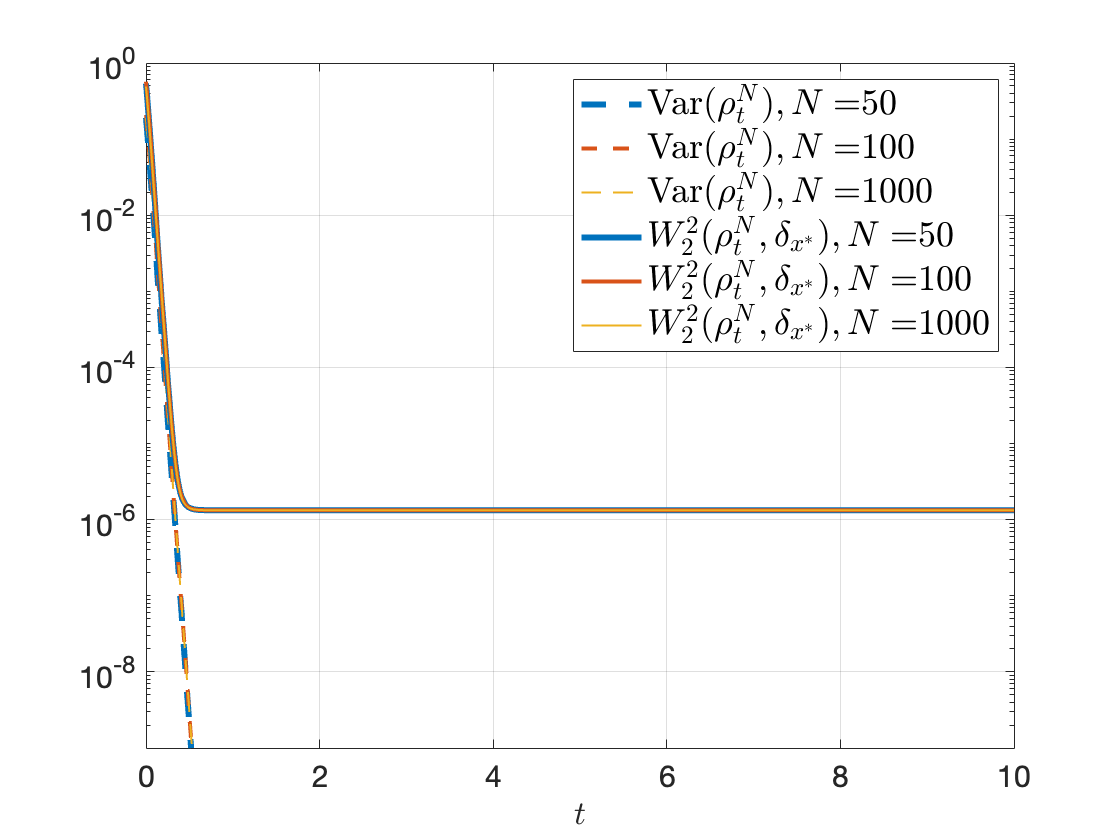}
\caption{Evolution of the variance $\operatorname{Var}\left(\rho_t^N\right)$ and $W_2^2(\rho_t^N,\delta_{x^*})$ in controlled-CBO.}
\label{1c}
\end{subfigure}
\begin{subfigure}[b]
  {0.49\textwidth}
\includegraphics[width=\textwidth]{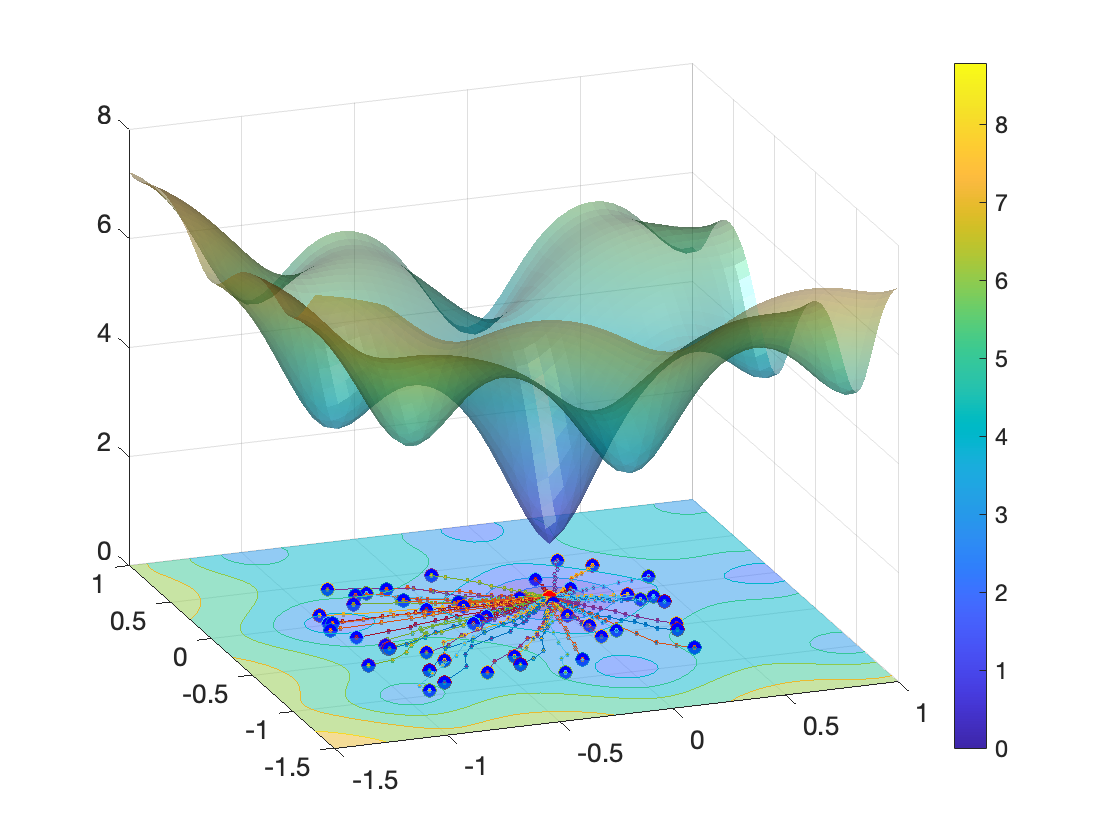}   
 \caption{Trajectory of $N=50$ particles under the controlled-CBO}
 \label{1d}
\end{subfigure}
\caption{The comparison between standard CBO and controlled-CBO in 2-dimensional Ackley function with initialization $\rho_0=\mathcal{U}[-1,0.5]^2$.  Both methods obtain convergence; however,
the controlled CBO demonstrates faster convergence towards the global minimizer and achieves higher accuracy. As shown in sub-figure (d), all particles follow a direct path from their initial positions to  $x^*$.}
\label{A1}
\end{figure}
Secondly, we consider an initial configuration in which particles are  equidistantly distributed within $[-1, -0.5]^2$. Figure \ref{A2} reveals that the controlled-CBO exhibits superior robustness in comparison to the standard CBO. This enhancement is due to the control term, which directs the particles in the direction of the global minimizer, whereas the particles under the standard CBO
tend to move towards local minimizers.
\begin{figure}[h!]
\centering
\captionsetup[subfigure]{aboveskip=-1pt,belowskip=-1pt}
  \begin{subfigure}[t]{0.45\textwidth}
\includegraphics[width=\textwidth]{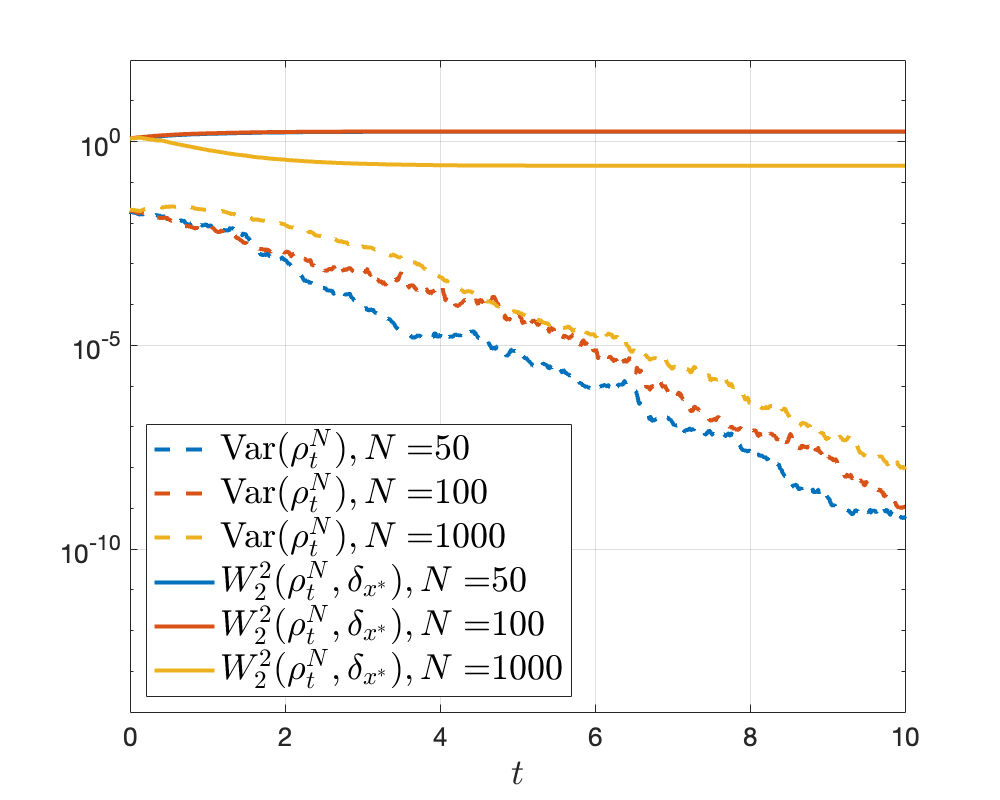}
  \caption{ Evolution of the variance $\operatorname{Var}\left(\rho_t^N\right)$ and $W_2^2(\rho_t^N,\delta_{x^*})$ in standard CBO}
  \label{2a}
  \end{subfigure}
\begin{subfigure}[t]
  {0.49\textwidth}
\includegraphics[width=\textwidth]{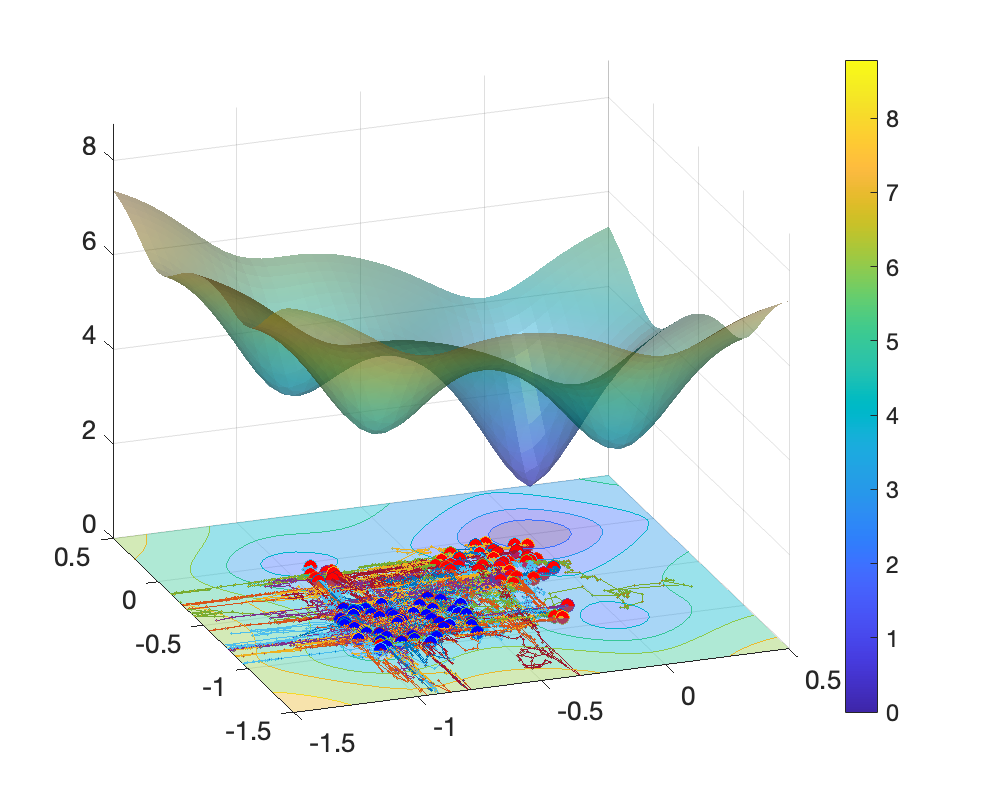}
  \caption{Trajectory of $N=50$ particles under the standard CBO}
   \label{2b}
  \end{subfigure}
\begin{subfigure}[t]
  {0.45\textwidth}
\includegraphics[width=1\textwidth]{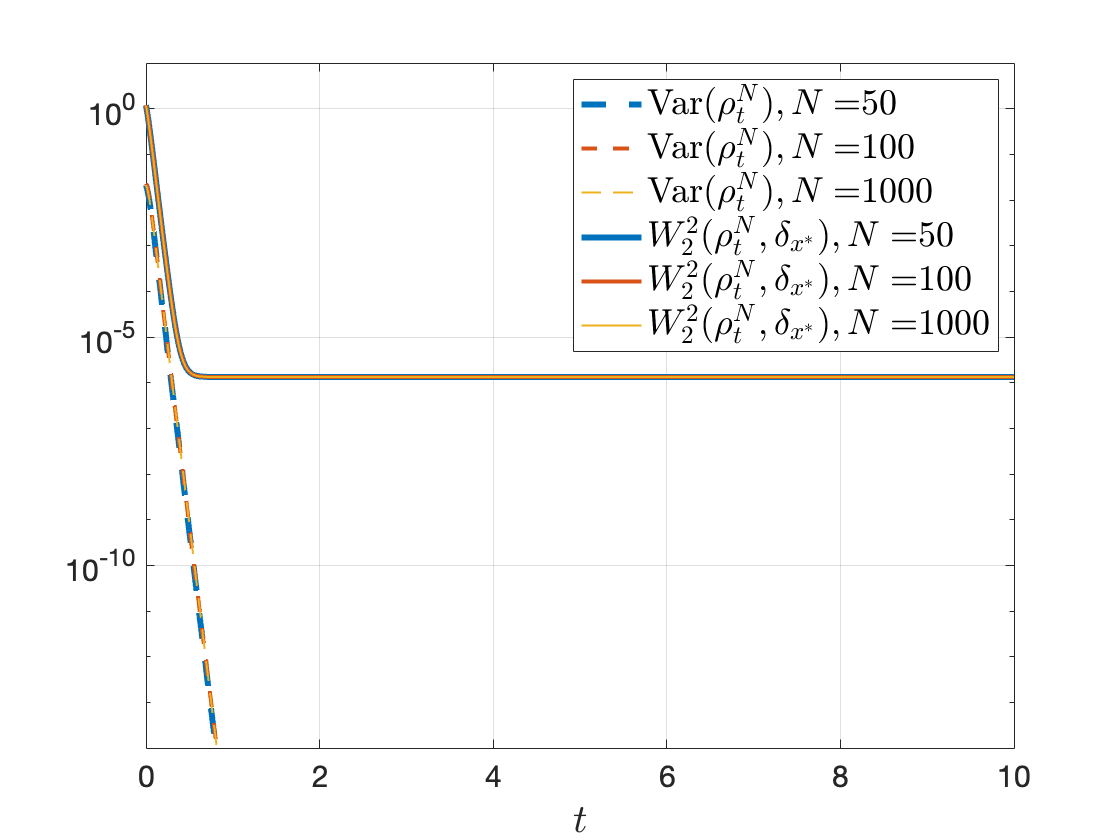}
\caption{ Evolution of the variance $\operatorname{Var}\left(\rho_t^N\right)$ and $W_2^2(\rho_t^N,\delta_{x^*})$ in controlled-CBO}
 \label{2c}
\end{subfigure}
\begin{subfigure}[t]
  {0.49\textwidth}
\includegraphics[width=\textwidth]{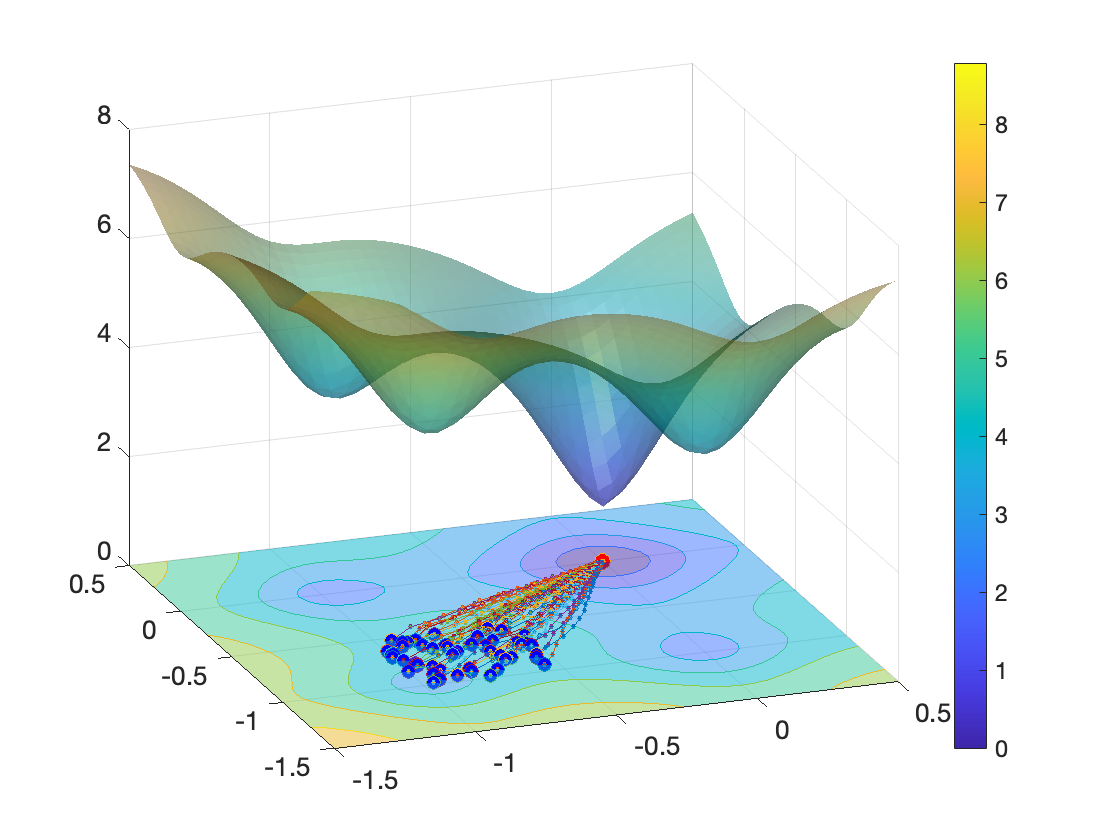}   
 \caption{Trajectory of $N=50$ particles under the controlled-CBO}
  \label{2d}
\end{subfigure}
\caption{The comparison between CBO and controlled-CBO in 2-dimensional  Ackley function with initialization of $\rho_0=\mathcal{U}[-1,-0.5]^2$. The standard CBO with non-favourable initialization fails to obtain convergence. While most of particles concentrate near the global minimizer, some tend to move toward local minimizers. In contrast, controlled CBO maintains  fast convergence and high accuracy, with particles  move almost directly to $x^*$.}
\label{A2}
\vspace{-0.2cm}
\end{figure}
To further illustrate how the value function and the associated feedback control guide the optimization process, \Cref{ack_V} presents plots of the approximated value function and control field derived from its gradient for the  Ackley function.
\begin{figure}[h]
\begin{subfigure}[t]
  {0.49\textwidth}
\includegraphics[width=\textwidth]{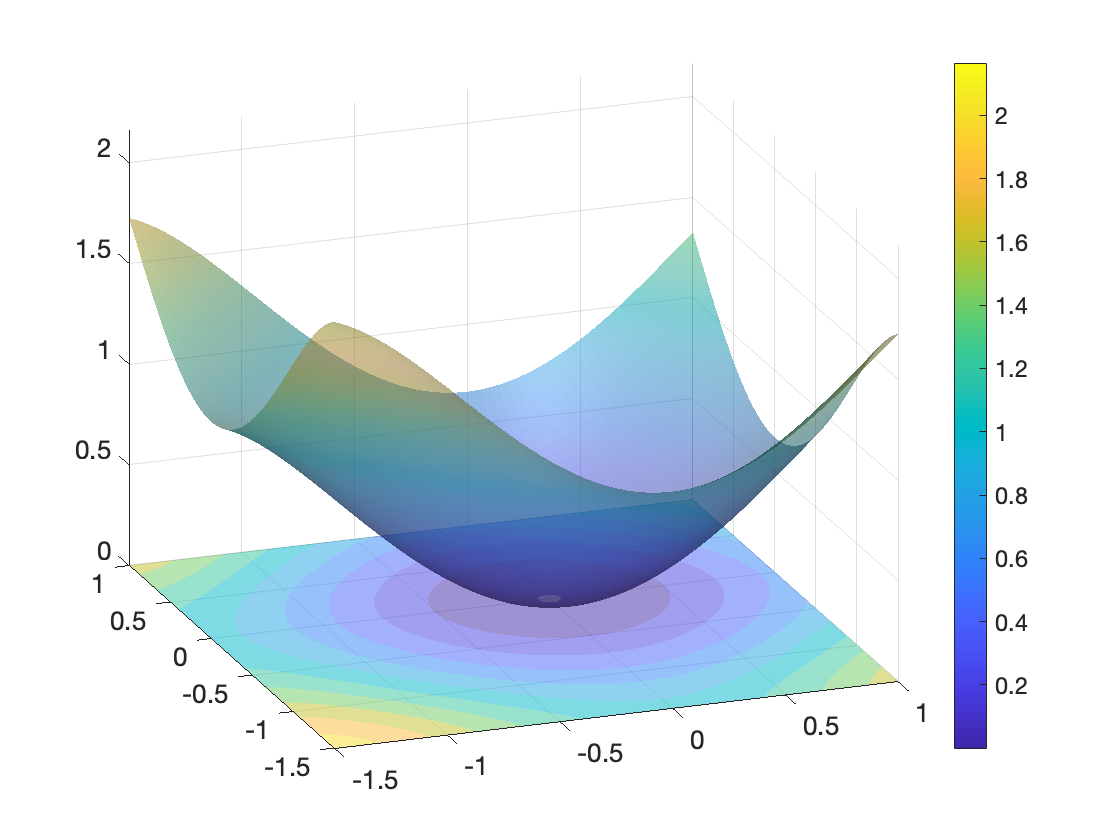}   
 \caption{Approximated value function}
\end{subfigure}
\begin{subfigure}[t]
  {0.49\textwidth}
\includegraphics[width=\textwidth]{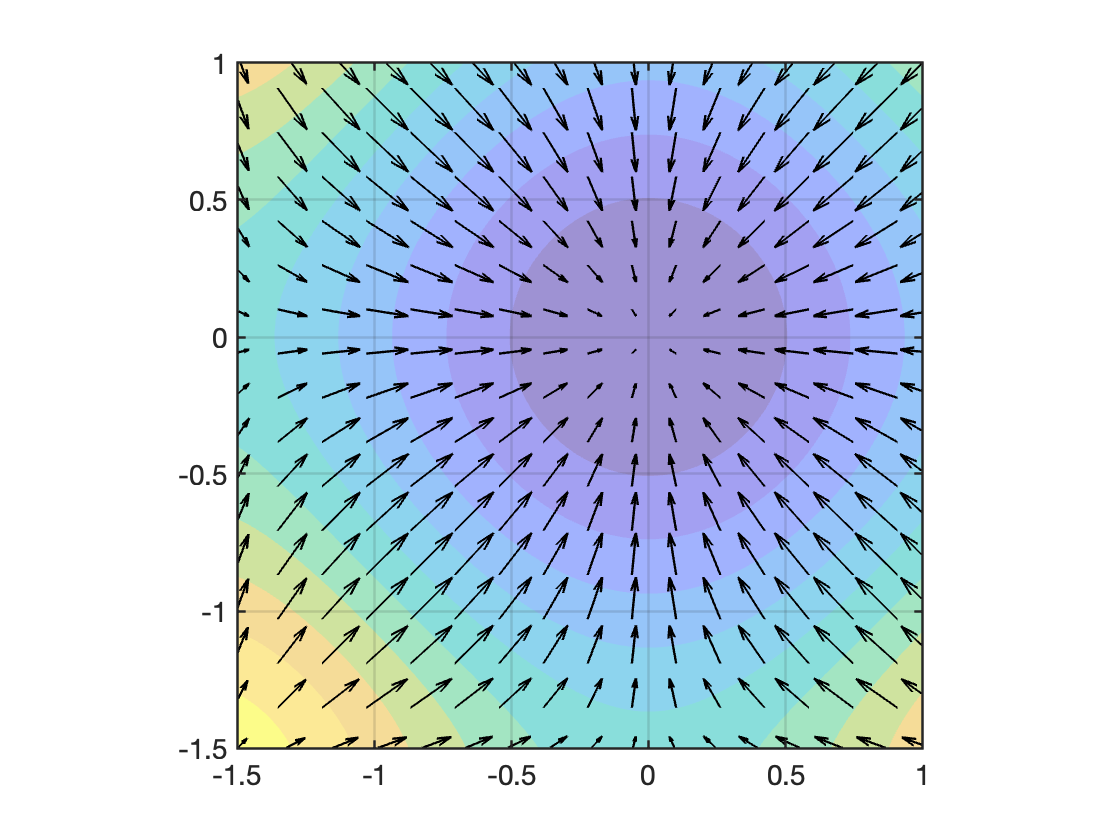}   
 \caption{Feedback control field}
\end{subfigure}
  \caption{Illustration of the approximated value function and the associated feedback control field, which guides the particle system toward the global minimizer for the Ackley function.}
  \label{ack_V}
  \end{figure}
  
\subsubsection{Rastrigin function}
We consider a similar numerical experiment as for the Ackley function, using a Legendre polynomial basis with  
total degree  $M=4$. 
\begin{figure}[h!]
\centering
\captionsetup[subfigure]{aboveskip=-1pt,belowskip=-1pt}
  \begin{subfigure}[t]{0.45\textwidth}
\includegraphics[width=\textwidth]{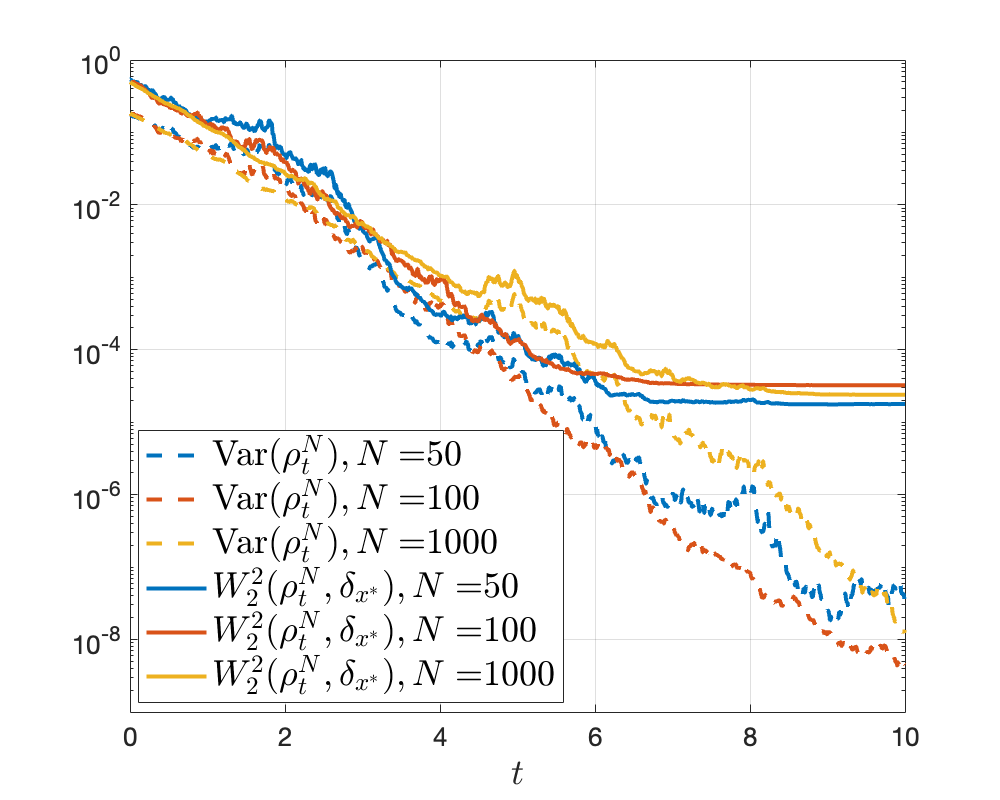}
  \caption{ Evolution of the variance $\operatorname{Var}\left(\rho_t^N\right)$ and $W_2^2(\rho_t^N,\delta_{x^*})$ in standard CBO}
  \label{r1}
  \end{subfigure}
\begin{subfigure}[t]
  {0.49\textwidth}
\includegraphics[width=\textwidth]{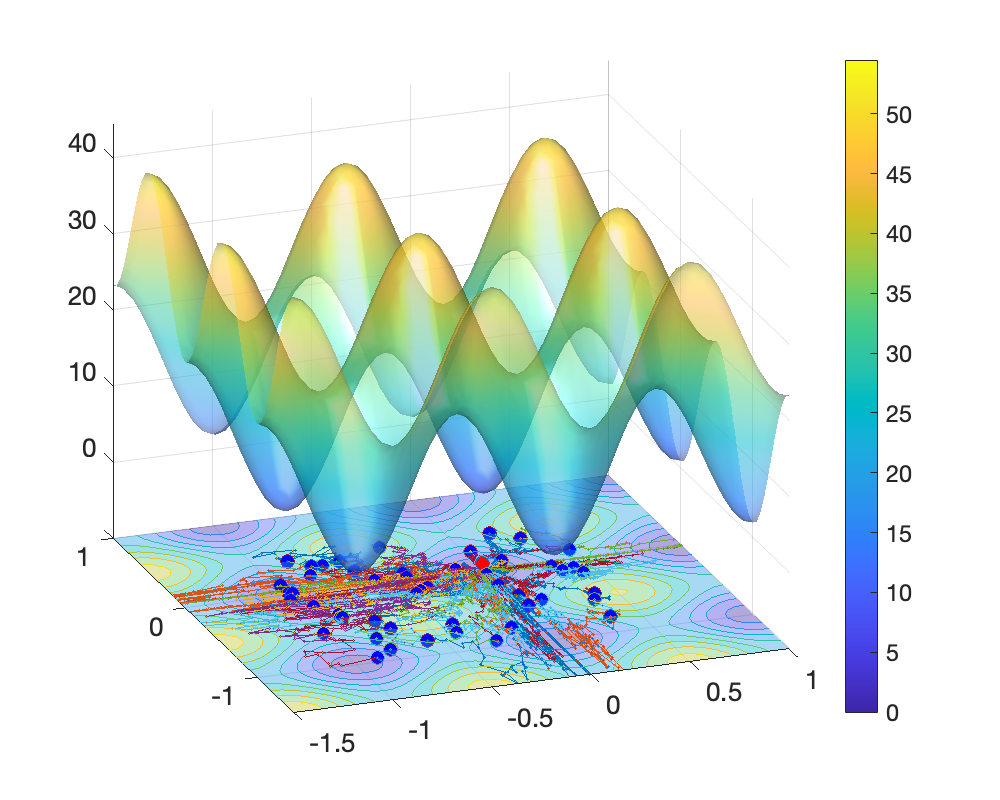}
  \caption{Trajectory of $N=50$ particles under the standard CBO}
   \label{r2}
   \end{subfigure}
\begin{subfigure}[b]
  {0.45\textwidth}
\includegraphics[width=1\textwidth]{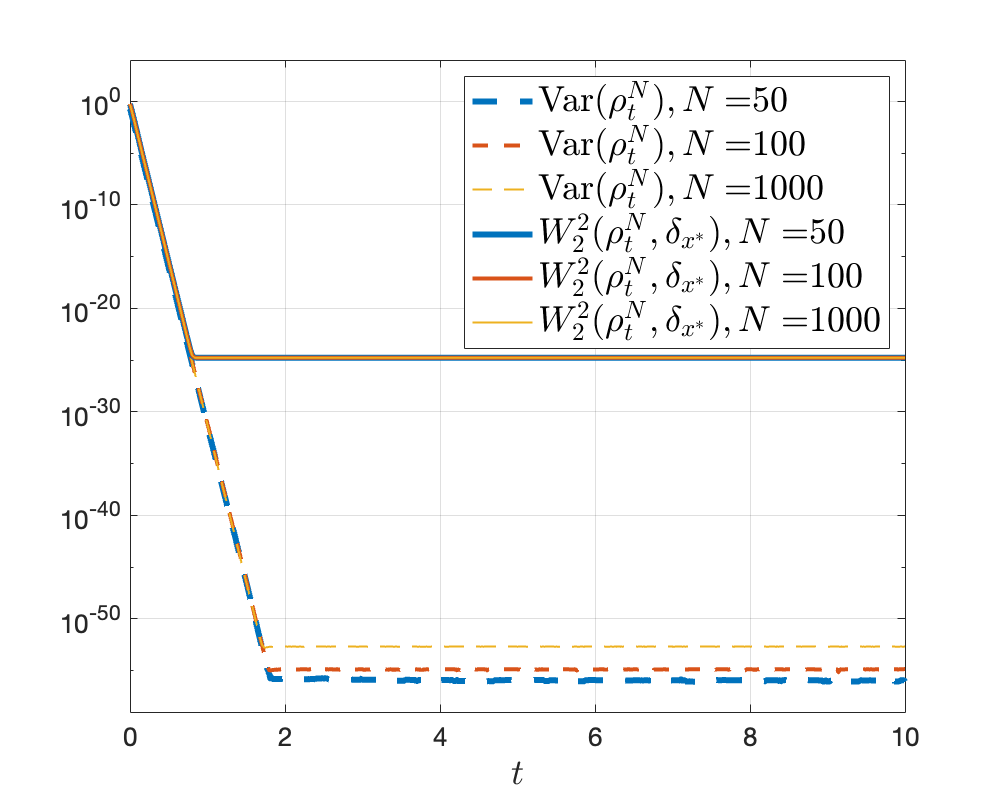}
\caption{Evolution of the variance $\operatorname{Var}\left(\rho_t^N\right)$ and $W_2^2(\rho_t^N,\delta_{x^*})$ in controlled-CBO}
 \label{r3}
\end{subfigure}
\begin{subfigure}[b]
  {0.49\textwidth}
\includegraphics[width=\textwidth]{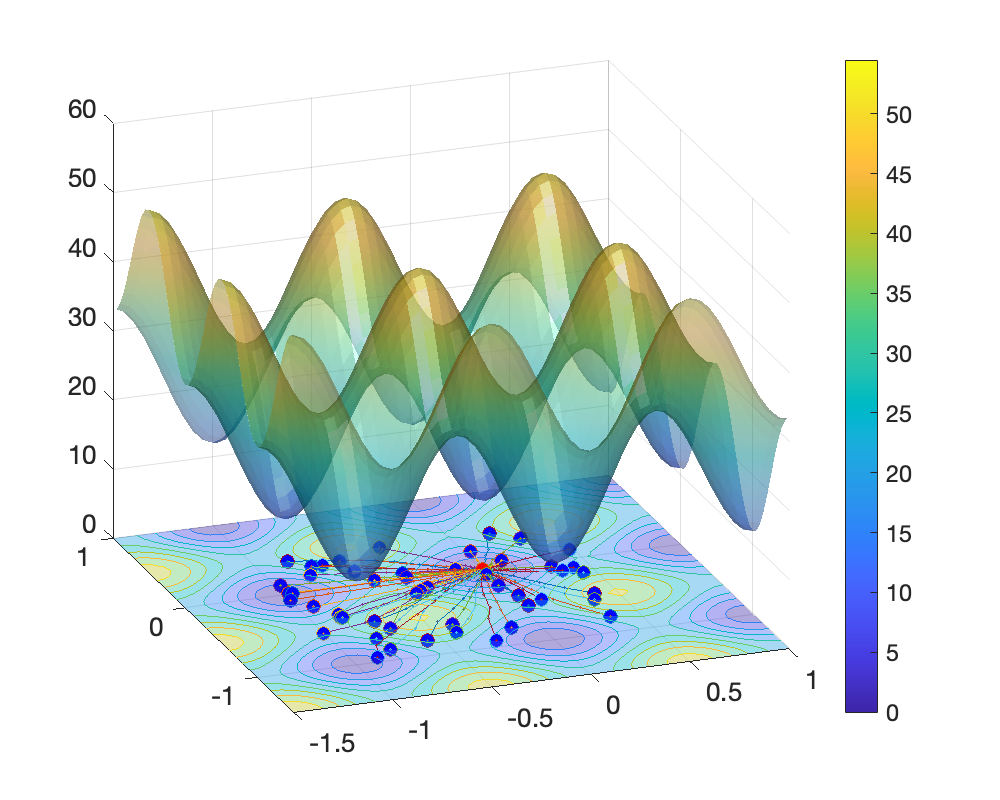}   
 \caption{Trajectory of $N=50$ particles under the controlled-CBO}
  \label{r4}
\end{subfigure}
\caption{The comparison between CBO and controlled-CBO in 2-dimensional  Rastrigin function with initialization of $\rho_0=\mathcal{U}[-1,0.5]^2$. While both methods achieve convergence, controlled CBO shows superior convergence speed and accuracy.}
\label{R1}
\vspace{-0.15cm}
\end{figure}
The results are presented in \cref{R1}. The figures show that if the initial mass distribution of the particle system encloses the global minimizer of the Rastrigin function, both standard CBO and controlled-CBO converge. As before, the controlled-CBO method exhibits significantly
faster convergence and higher level of accuracy. It can also be seen from the trajectory in \cref{r4} that the particles move directly to the global minimizer.
Then, we consider particles are initially equidistantly distributed on $[-1,-0.5]^2$,  which  does not contain the global minimizer $x^*=(0,0)$.
\begin{figure}[h!]
\centering
\captionsetup[subfigure]{aboveskip=-1pt,belowskip=-1pt}
  \begin{subfigure}[t]{0.45\textwidth}
\includegraphics[width=\textwidth]{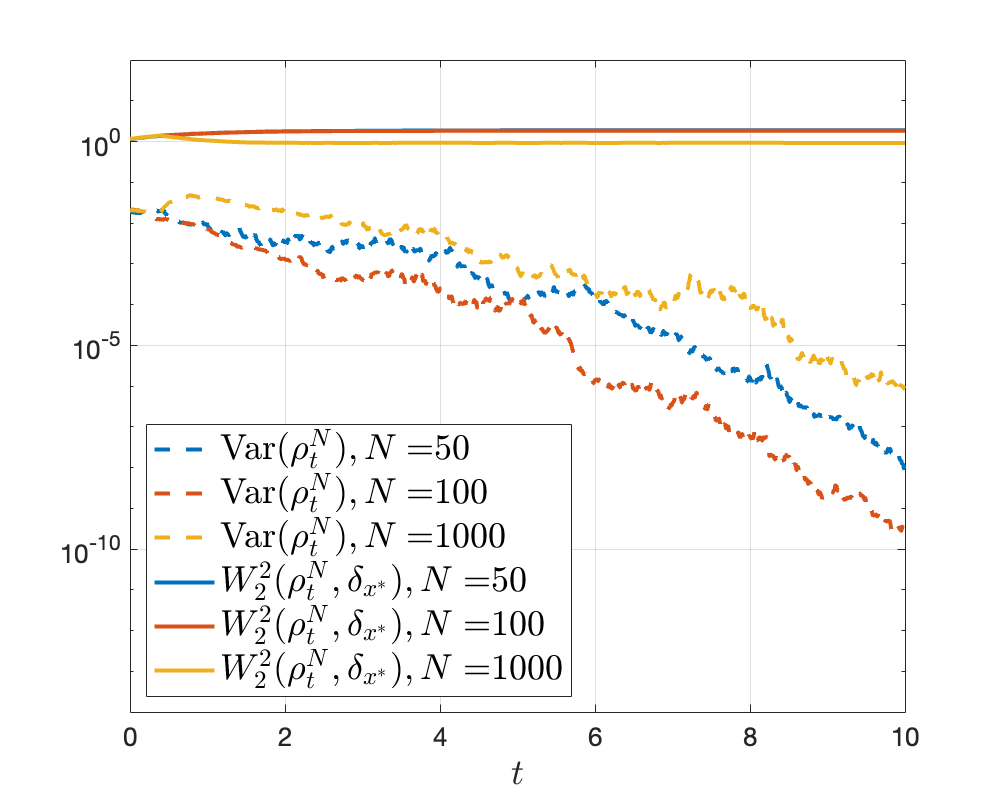}
  \caption{ Evolution of the variance $\operatorname{Var}\left(\rho_t^N\right)$ and $W_2^2(\rho_t^N,\delta_{x^*})$ in standard CBO}
  \end{subfigure}
\begin{subfigure}[t]
  {0.49\textwidth}
\includegraphics[width=\textwidth]{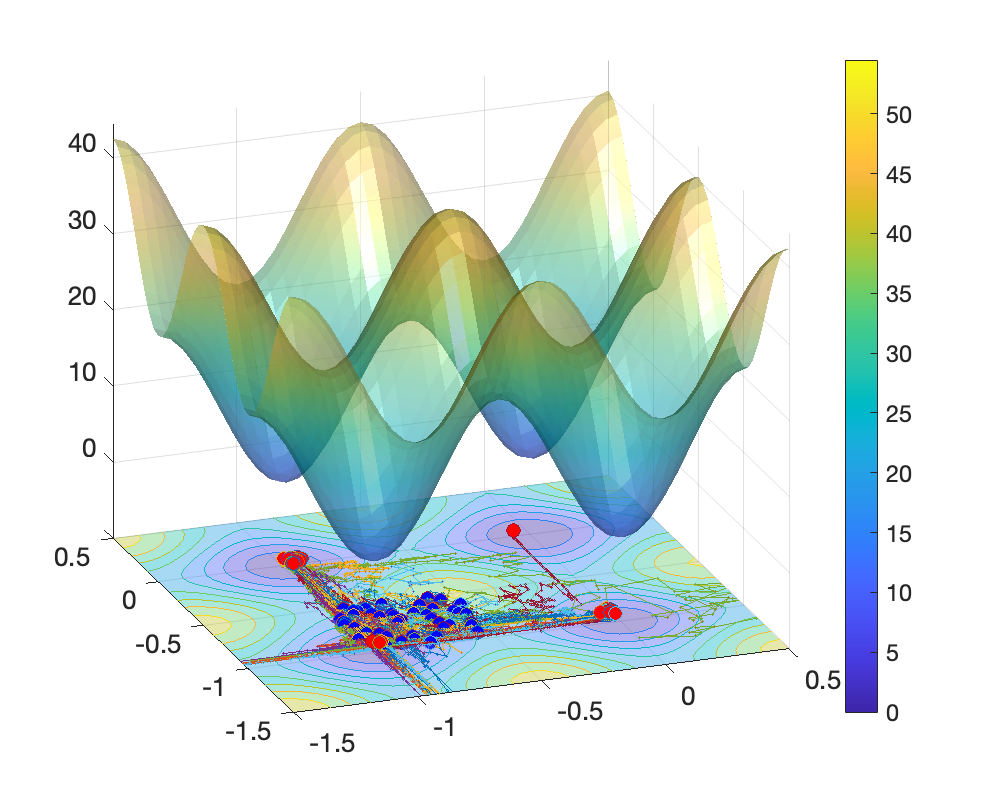}
  \caption{Trajectory of $N=50$ particles under the standard CBO}
  \end{subfigure}
\begin{subfigure}[b]
  {0.45\textwidth}
\includegraphics[width=1\textwidth]{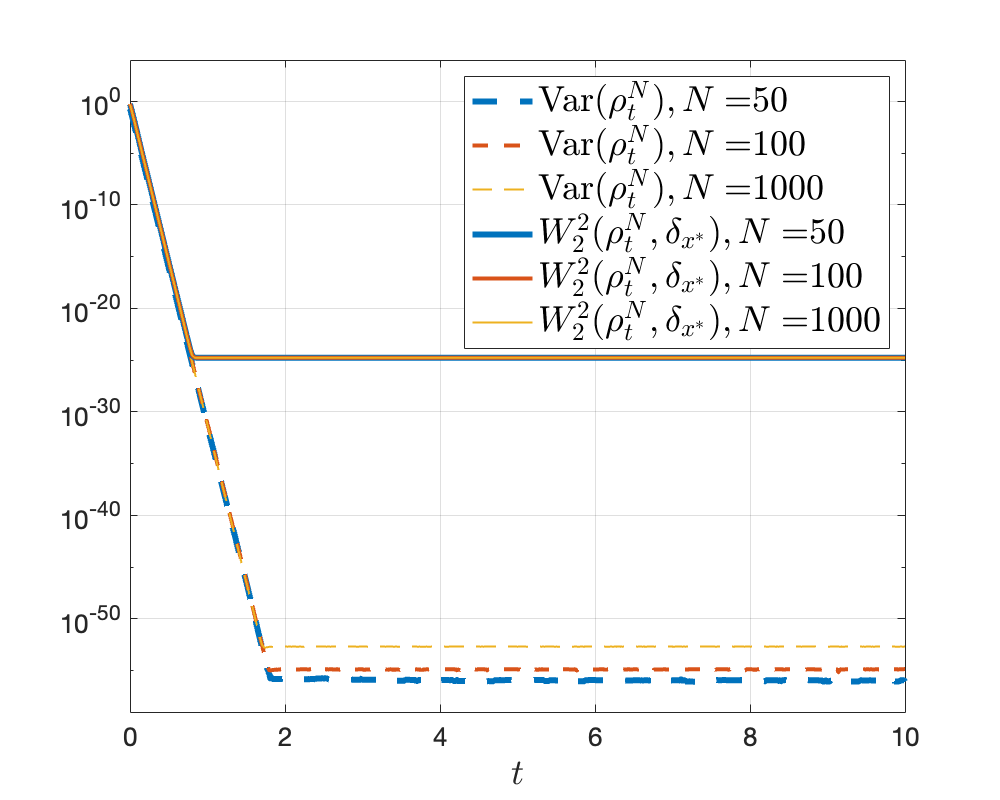}
\caption{ Evolution of the variance $\operatorname{Var}\left(\rho_t^N\right)$ and $W_2^2(\rho_t^N,\delta_{x^*})$ in controlled-CBO}
\end{subfigure}
\begin{subfigure}[b]
  {0.49\textwidth}
\includegraphics[width=\textwidth]{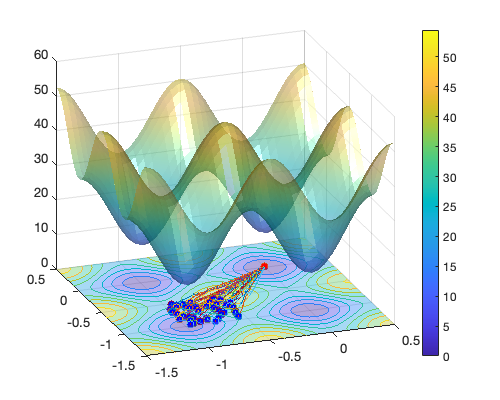}   
 \caption{Trajectory of $N=50$ particles under the controlled-CBO}
\end{subfigure}
\caption{The comparison between CBO and controlled-CBO in 2-dimensional  Rastrigin function with initialization $\rho_0=\mathcal{U}[-1,-0.5]^2$. The sub-figures (b) and (d) further reveal that some particles driven by standard CBO become trapped in local minimizers, whereas the controlled particles follow a direct path toward the global minimizer $x^*$.
}
\label{R2}
\end{figure}
Figure \ref{R2} provides additional empirical support for our conclusion. Under the standard CBO algorithm, some particles tend to stay in local minimizers and, within our simulation setup and time frame, fail to converge towards the global minimizer.
Conversely, the controlled-CBO algorithm exhibits superior performance due to the application of an optimal control signal. This effect is further illustrated in \Cref{ras_V}, which shows the approximated value function and the associated control field, clearly directing the particles toward the  global minimizer.
\begin{figure}[h!]
\begin{subfigure}[t]
  {0.49\textwidth}
\includegraphics[width=\textwidth]{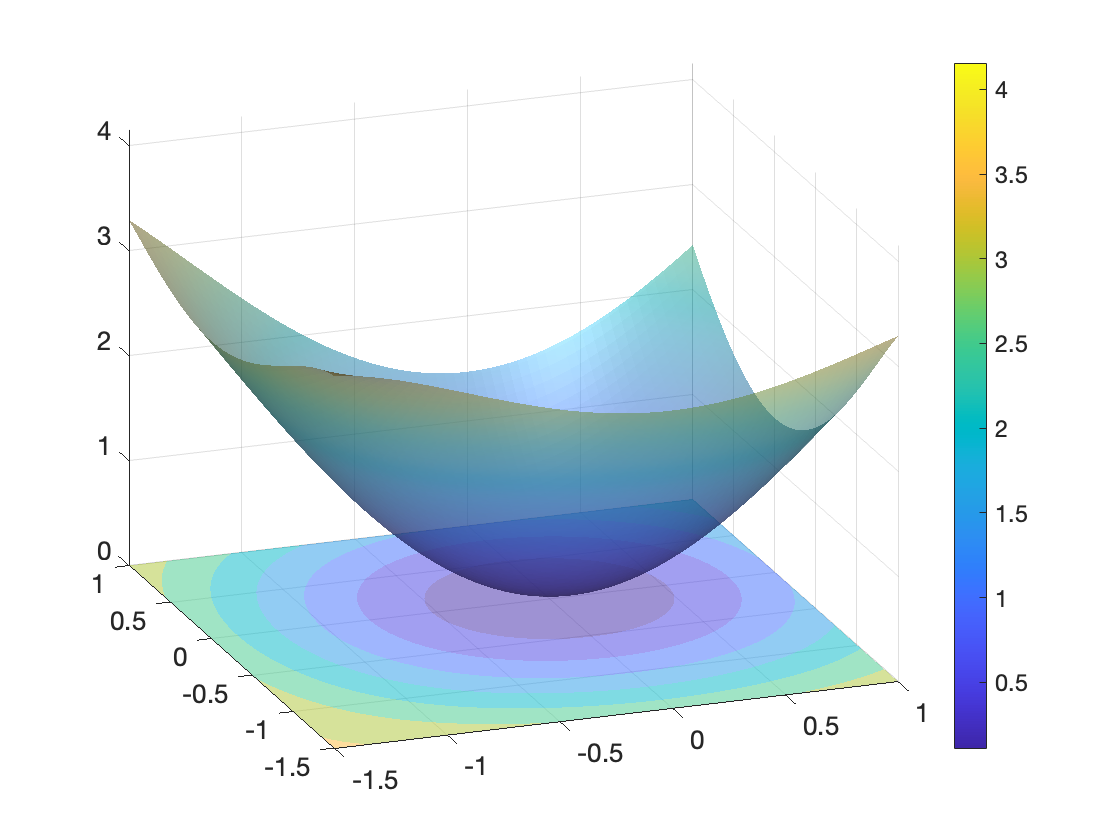}   
 \caption{Approximated value function}
\end{subfigure}
\begin{subfigure}[t]
  {0.49\textwidth}
\includegraphics[width=\textwidth]{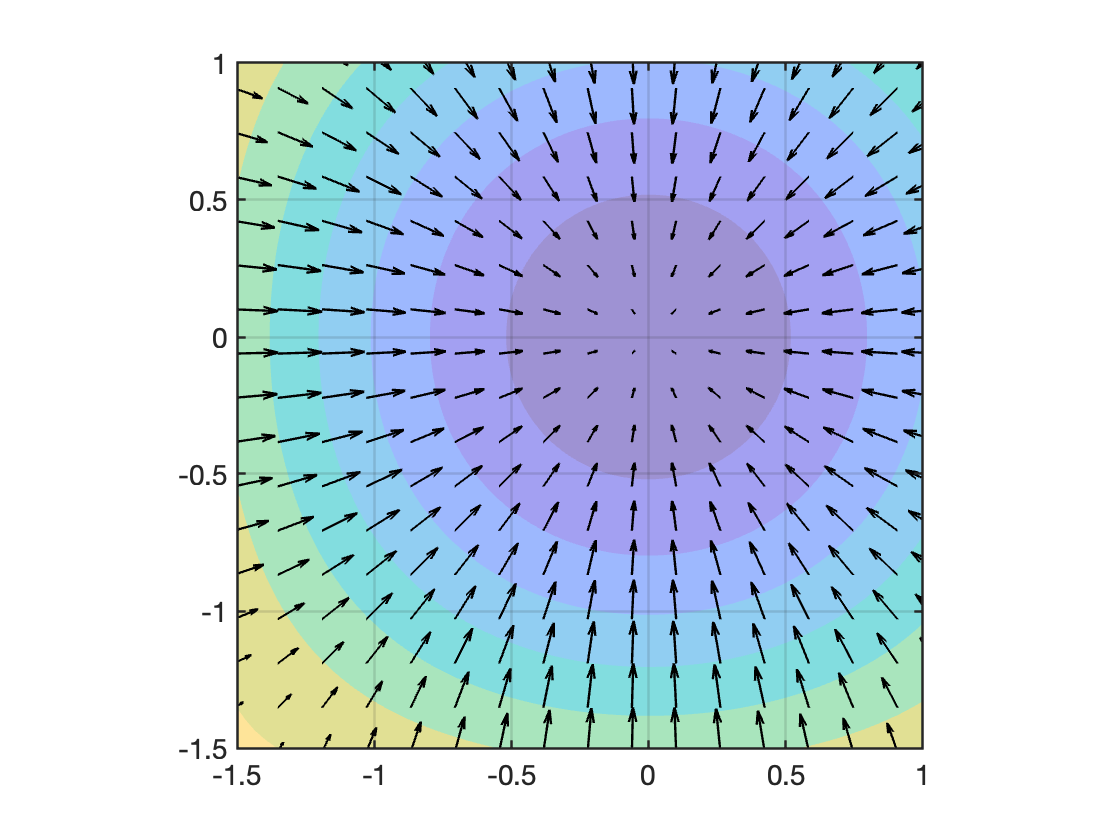}   
 \caption{Feedback control field}
\end{subfigure}
  \caption{Approximated value function and control field for the Rastrigin function. The feedback control steers particles toward the global minimizer.}
  \label{ras_V}
  \end{figure}

\subsection{Increasing the degree of HJB Approximation}
\label{degree}
The effectiveness of the controlled-CBO algorithm depends on 
the accuracy of the approximation of solution to the HJB equation.
By allowing higher-degree polynomials to appear in the basis, we can enhance the accuracy of the HJB approximation, subsequently improving the performance of the controlled-CBO method. To illustrate this idea, we begin by providing an example. Consider an $1$-dimensional objective function
\begin{equation}
f (x) = (x^2-2.2)^2 - 0.08x + 0.5, 
\label{2_min}
\end{equation}
whose global minimum is
located at $x^*= 1.48776$ with
$f(x^*)=0.38116$ and a local minimum located at $x=-1.47867$ with value $0.618477$. For this problem, we apply Legendre polynomial basis truncated by total degree $M$ and implement the \cref{alg2} over the bounded domain $[-4,4]$. \Cref{2min_value} reveals that when the degree of approximation is relatively low (e.g., $M=2,\;4,\;6$), the approximation fails to identify the global minimizer, possibly reducing the effectiveness of the controlled-CBO method.  When the degree of approximation increases to $M=8$, the approximation is more accurate and 
the resulting control $u^*$  is able to indicate a correct direction towards the area around the global minimizer $x^*=1.48776$. Afterwards, the CBO mechanism refines the search and reaches $x^*$ more precisely. Accordingly, even if the initial configuration of the particle system is far away from the global minimizer, the controlled-CBO converges successfully, see \cref{2min_com}.  To further illustrate how the control term influences the particle dynamics, \Cref{2min_tra} displays trajectories of the uncontrolled gradient flow $\dot{x}=-\nabla f(x)$ and the controlled flow $\dot{x}= u_n(x)$, both implemented by Euler–Maruyama scheme over time horizon $T=10$ with step size $dt=0.01$.  Starting from the initial position $x(0)=-2,$ the gradient flow does not escape the local minimum, whereas the controlled dynamics gets closer to the true global minimizer as $M$ increases. 

\begin{figure}[h!]
  \subfloat[]{
    \includegraphics[width=0.44\textwidth]{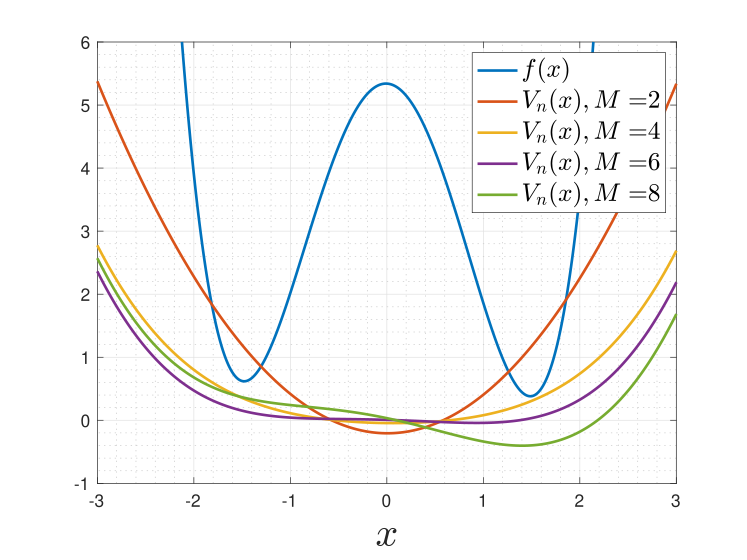}
    \label{2min_value}
  }
  \subfloat[]{
    \includegraphics[width=0.45\textwidth]{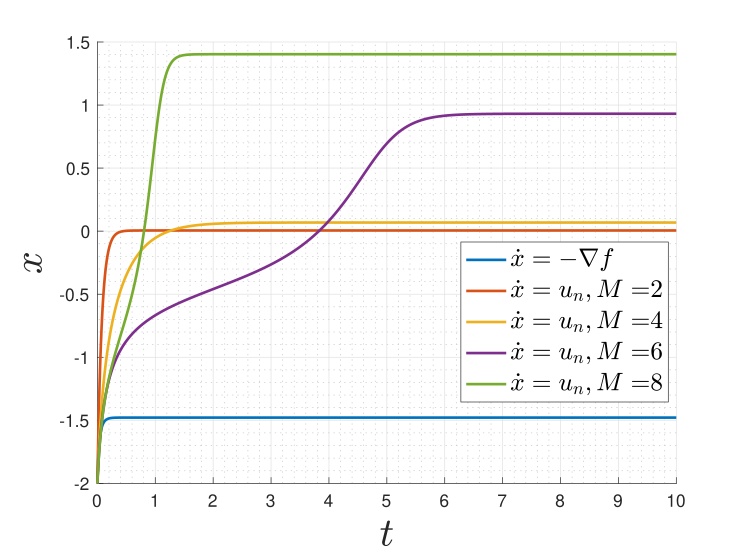}
     \label{2min_tra}
  }
\caption{(a) The comparison between the exact objective function $f$ in \eqref{2_min} and the approximated value function $V_n$ with different total degree $M$. As $M$ increases, $V_n$ begins to recover the global minimizer of $f$ and magnifies the discrepancy between local and global minimizers.   (b) Trajectories of the gradient flow $\dot{x}=-\nabla f(x)$ and  the controlled flows $\dot{x}=u_n(x)$, all initialized at $x(0)=-2$. For sufficiently large $M$, the trajectories escape the local minimum and converge to the true global minimizer.}
\end{figure}
\begin{figure}[h!]
\centering
\captionsetup[subfigure]{aboveskip=-1pt,belowskip=-1pt}
\begin{subfigure}{0.45\textwidth}
\includegraphics[width=\textwidth]{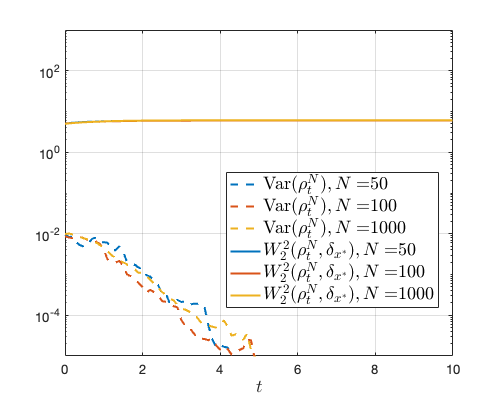}
\caption{Standard CBO }
\end{subfigure}
\begin{subfigure}{0.45\textwidth}
\includegraphics[width=\textwidth]{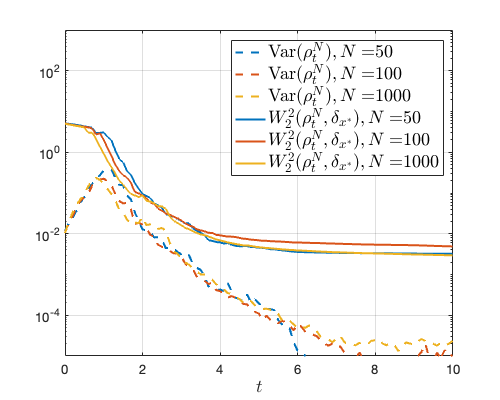}   
 \caption{Controlled-CBO}
\end{subfigure}
\caption{The evolution of the variance $\operatorname{Var}\left(\rho_t^N\right)$ and $W_2^2(\rho_t^N,\delta_{x^*})$ of standard CBO and controlled-CBO in minimizing the function \cref{2_min} with  initialization $\rho_0=\mathcal{U}[-1,-0.5]$, which does not encircle the global minimizer. The controlled-CBO achieves convergence, whereas the standard CBO fails to converge.}
\label{2min_com}
\vspace{-0.15cm}
\end{figure}
It is worth noting that this problem is intentionally designed to require higher-degree polynomial expansions for accuracy, making it particularly challenging. In such cases, the
use of  Legendre polynomials  provides more stable and accurate resolution at a similar computational cost. However, in our context,  monomial polynomial basis can perform satisfactorily, even in non-smooth settings. To show this, we provide a numerical test for a non-smooth function
\begin{equation}
g(x)= \begin{cases}x^2, & x<-2 \\ 4, & -2 \leq x \leq 0 \\ 4(x-1)^2 & x>0\end{cases}, 
\label{objective_nonsmooth}
\end{equation}
whose global minimum is located at $x^*=1$. For this problem, we apply monomial polynomial basis truncated by total degree $M$ and implement the \cref{alg2} over the bounded domain $[-3,3]$. The comparison between the original objective function and the approximated value functions is shown in \Cref{nonsmooth}. This test problem highlights the advantage of using $DV$ over $Df$ guiding the dynamics toward the global minimizer, as discussed in \Cref{rem:1}. The flat region on the interval $[-2,0]$ poses a significant challenge for gradient-based methods, whereas the value function $V$ serves not only a smooth surrogate for the objective,  but also provides more informative guidance in locating global minimizer, see \Cref{nonsmooth_tra}.
\begin{figure}[h!]
\centering
  \subfloat[]{
    \includegraphics[width=0.44\textwidth]{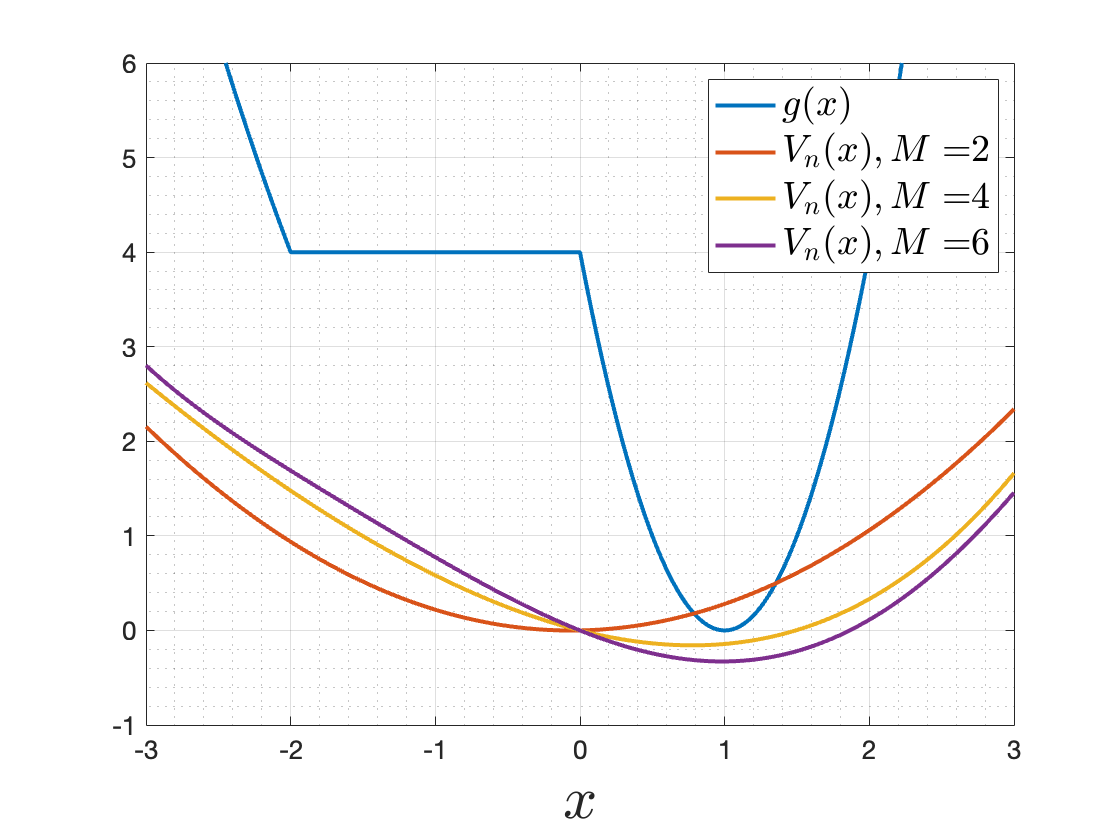}
    \label{nonsmooth}  }
  \subfloat[]{
    \includegraphics[width=0.45\textwidth]{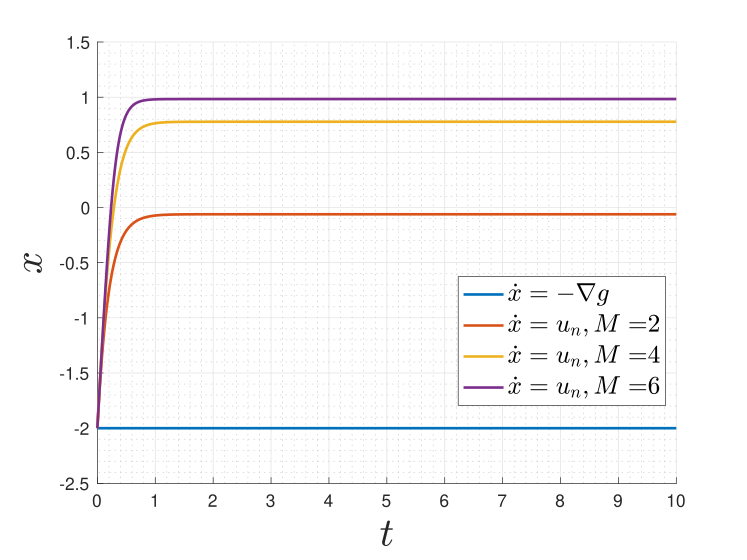}
     \label{nonsmooth_tra}
  }
\caption{(a) The comparison between the exact objective function $g$ in \eqref{objective_nonsmooth} and the approximated value function $V_n$ with different total degree $M$. (b) Trajectories of the gradient flow $\dot{x}=-\nabla g(x)$ and  the controlled flows $\dot{x}=u_n(x)$, all initialized at $x(0)=-2$.}
\end{figure}

\subsection{High-dimensional benchmark problems}
We further implement the algorithm for high-dimensional Rastrigin and Ackley benchmarks. We consider $N=50$ particles and initial distribution $\rho_0=\mathcal{U}[-1,-0.5]^d$. The expectation is computed with $100$ realizations of the controlled-CBO algorithm, see results in \Cref{table:hd_Ras} for Rastrigin functions and \Cref{table:hd_Ackley} for Ackley functions. Due to limitations in the computational efficiency of the HJB solver, controlled CBO method is not expected to handle extremely high-dimensional problems. However, satisfactory numerical results have been obtained for 8-dimensional problems with truncated total degree 6, and for tests up to 30 dimensions with only 50 particles, by using the hyperbolic cross basis to mitigate the curse of dimensionality.
\begin{table}[h]
\begin{subtable}[c]{\textwidth}
\centering
\scalebox{0.75}{
\begin{tabular}{|l|l|l|l|l|l|l|}
\hline
$\mathbb{E}[W_2^2(\rho_t^N,\delta_{x^*})]$& $d=2$      & $d=4$      & $d=6$      & $d=8$      & $d=10$     & $d=30$     \\ \hline
$J=2$             & $8.43\times 10^{-28}$ & $7.99\times 10^{-29}$ & $1.15\times 10^{-30}$ & $3.63\times 10^{-31}$ & $1.79\times 10^{-29}$ & $5.06\times 10^{-30}$ \\ \hline
$J=4$                    & $1.81\times 10^{-28}$ & $4.14\times 10^{-28}$ & $1.41\times 10^{-32}$ & $3.52\times 10^{-29}$ & $1.75\times 10^{-27}$ & $1.65\times 10^{-29}$ \\ \hline
\end{tabular}
}
\subcaption{Hyperbolic cross basis generated by Legendre polynomials with the maximum degree $J$}
\end{subtable}
\begin{subtable}[c]{\textwidth}
\centering
\scalebox{0.75}{
\begin{tabular}{|l|l|l|l|l|l|l|}
\hline
$\mathbb{E}[W_2^2(\rho_t^N,\delta_{x^*})]$& $d=2$      & $d=4$      & $d=6$      & $d=8$      & $d=10$     & $d=30$     \\ \hline
$J=2$       &$1.42\times 10^{-18}$      & $2.72\times 10^{-18}$ & $4.06\times 10^{-18}$ & $5.50\times 10^{-18}$ & $7.42\times 10^{-18}$ & $1.74\times 10^{-17}$  \\ \hline
$J=4$     &$1.35\times 10^{-19}$        & $1.49\times 10^{-19}$ & $2.54\times 10^{-19}$ & $4.11\times 10^{-19}$ & $6.02\times 10^{-19}$ & $2.65\times 10^{-18}$ \\ \hline
\end{tabular}
}
\subcaption{Hyperbolic cross basis generated by monomials with the maximum degree $J$.}
\end{subtable}
\begin{subtable}[c]{\textwidth}
\centering
\scalebox{0.75}{
\begin{tabular}{|l|l|l|l|l|}
\hline
    & $d=2$      & $d=4$      & $d=6$      & $d=8$     \\ \hline
$M=2$ & $2.72\times 10^{-28}$ & $2.14\times 10^{-31}$ & $3.11\times 10^{-31}$ & $3.63\times 10^{-31}$ \\ \hline
$M=4$ & $1.17\times 10^{-29}$ & $3.05\times 10^{-29}$ & $1.90\times 10^{-28}$ & $1.53\times 10^{-27}$ \\ \hline
$M=6$ & $9.70\times 10^{-27}$ & $2.53\times 10^{-28}$ & $4.13\times 10^{-28}$ & $2.67\times 10^{-27}$ \\ \hline
\end{tabular}
}
\subcaption{Full multidimensional basis generated by Legendre polynomials and truncated by total degree $M$.}
\end{subtable}
\caption{Numerical results for the controlled-CBO in $d$-dimensional Rastrigin function with different basis.}
\label{table:hd_Ras}
\end{table}

\begin{table}[h]
\centering
\scalebox{0.75}{
\begin{tabular}{|l|l|l|l|l|l|l|}
\hline
$\mathbb{E}[W_2^2(\rho_t^N,\delta_{x^*})]$ & $d=2$                & $d=4$                & $d=6$                & $d=8$                & $d=10$ & $d=30$ \\ \hline
$J=2$                                      & $6.30\times 10^{-7}$ & $5.31\times 10^{-6}$ & $8.38\times 10^{-6}$ & $1.45\times 10^{-5}$ & $ 3.03\times 10^{-5}$  &   $7.27\times 10^{-4}$     \\ \hline
$J=4$                                      & $7.17\times 10^{-7}$ & $6.77\times 10^{-6}$ & $2.80\times 10^{-6}$ & $1.23\times 10^{-5}$ &   $1.04\times 10^{-5}$     &     $9.86\times 10^{-4}$    \\ \hline
\end{tabular}
}
\caption{Numerical results for the controlled-CBO in $d$-dimensional Ackley function with $N_m=10^6$ (number of uniform samples for Monte Carlo integration). The HJB approximation uses hyperbolic cross basis generated by monomials with the maximum degree $J$.}
\label{table:hd_Ackley}
\vspace{-0.1cm}
\end{table}

\section{Conclusions}
\label{sec:conclusions}
We  developed a consensus-based global optimization method incorporating an accelerating feedback control term. This proposed controlled-CBO method exhibits faster convergence and remarkably higher accuracy than existing CBO algorithms
for standard benchmark functions. In particular, the controlled-CBO method is well suited if the particle system is poorly initialized and when using only a small number of particles. Future work will focus on
establishing theoretical results related to the convergence rate of the controlled-CBO.  
This approach has potential applications in complex engineering problems, such as shape optimization, optimal actuator/sensor placement \cite{kalise2024multi}, and hyper-parameter tuning in machine learning. In future work, we will study the practical impact of the proposed methodology on the aforementioned problems, 
where classical gradient-based methods typically fail due to non-convex energy landscapes and the presence of  numerous local minimizers. Most notably, the optimal feedback control can be introduced into various classical optimization methods to enhance performance and overcome non-convexity.  However, the scalability of control-based methods currently limited by the computational efficiency of the HJB solver, making it less suitable for extremely high-dimensional problems. Future work may also focus on enhancing the efficiency of the HJB solvers
to extend the applicability of control-based methods to larger-scale systems. 

\appendix

\section{Convergence of \cref{alg1}}\label{sec:appx_vi}
 Before proving Proposition \ref{main}, we first show that given arbitrary control $u\in\mathcal{A}(\Omega)$,  there exists a {unique} solution to the GHJB equation $\mathcal{G}_\mu(V,DV,u)=0$.

\begin{lemma}
\label{lemma_existence}
Assume that \Cref{ass_convergence}\ref{1} holds, 
then for any admissible control $u\in\mathcal{A}(\Omega)$, there exists a unique  continuously differentiable solution $V$ to the equation $\mathcal{G}_\mu(V,DV,u)=0$.
\end{lemma}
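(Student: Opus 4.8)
The plan is to recognise the GHJB equation $\mathcal{G}_\mu(V,DV;u)=0$ as the linear first-order PDE
\[
\langle DV(x),\,u(x)\rangle-\mu V(x)=-g(x),\qquad g(x):=f(x)+\frac{\epsilon}{2}|u(x)|^2,
\]
and to solve it by the method of characteristics, whose characteristic curves are exactly the closed-loop trajectories $\dot{y}(t)=u(y(t))$, $y(0)=x$, of the control system \eqref{control}. Along such a curve, writing $w(t):=V(y(t))$, the PDE collapses to the scalar linear ODE $\dot{w}=\mu w-g(y(t))$, whose unique bounded (non-growing) solution is $w(t)=\int_t^\infty e^{-\mu(s-t)}g(y(s))\,ds$. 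Evaluating at $t=0$ singles out the natural candidate
\[
V(x):=\int_0^\infty e^{-\mu s}\,g(y(s;x))\,ds=\mathcal{J}(u(\cdot),x),
\]
i.e.\ the cost functional itself.

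First I would check that $V$ is well defined and finite: admissibility $u\in\mathcal{A}(\Omega)$ is precisely the statement $\mathcal{J}(u(\cdot),x)<\infty$, while \Cref{ass_convergence}\ref{1} (compactness of $\Omega$, $f\geq 0$ and Lipschitz) controls the running cost $g$. Next, from the flow (semigroup) property of the characteristics I would derive the dynamic-programming identity $V(x)=\int_0^{\tau}e^{-\mu s}g(y(s;x))\,ds+e^{-\mu\tau}V(y(\tau;x))$ for small $\tau>0$. Differentiating this identity in $\tau$ at $\tau=0$, using $\dot{y}(0)=u(x)$, returns $\langle DV(x),u(x)\rangle-\mu V(x)+g(x)=0$, i.e.\ $\mathcal{G}_\mu(V,DV;u)=0$; hence the candidate solves the equation.

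The continuous differentiability is the delicate point and I expect it to be the main obstacle. Differentiating $V(x)=\int_0^\infty e^{-\mu s}g(y(s;x))\,ds$ under the integral sign requires smoothness of the flow map $x\mapsto y(s;x)$, which needs $u$ to be at least locally Lipschitz rather than merely continuous. I would therefore either invoke the regularity of the admissible controls actually produced by the iteration (each $u=-\frac{1}{\epsilon}DV_n$ is a polynomial gradient, hence $C^\infty$), or regularise $u$ and pass to the limit, using the Lipschitz and growth bounds on $g$ from \Cref{ass_convergence}\ref{1} together with the exponential factor $e^{-\mu s}$ to dominate the integrand and justify interchanging differentiation and integration.

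Finally, for uniqueness, suppose $V_1,V_2\in C^1(\Omega)$ both solve the equation and set $W:=V_1-V_2$, so that $\langle DW,u\rangle=\mu W$ on $\Omega$. Along any characteristic remaining in $\Omega$ one gets $\frac{d}{dt}W(y(t))=\mu W(y(t))$, hence $W(y(t))=e^{\mu t}W(x)$; since $W$ is continuous on the compact set $\Omega$ it is bounded, whereas $e^{\mu t}|W(x)|\to\infty$ unless $W(x)=0$, forcing $W\equiv 0$. This step relies on the forward trajectories remaining in $\Omega$ (guaranteed for the admissible controls under consideration, cf.\ the discussion following \Cref{ass_convergence}) and on the positivity of the discount factor $\mu$, which is exactly what makes the discounted problem better behaved than the undiscounted GHJB analysed in \cite{beard1997galerkin,beard1995improving}.
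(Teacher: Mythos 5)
The existence half of your proposal is essentially the paper's own proof: both take the candidate $V(x)=\mathcal{J}(u(\cdot),x)$, establish the dynamic-programming identity $V(x)=\int_0^\tau e^{-\mu s}g(y(s;x))\,ds+e^{-\mu\tau}V(y(\tau;x))$, and differentiate along the closed-loop trajectory to recover $\mathcal{G}_\mu(V,DV;u)=0$. Where you genuinely diverge is in uniqueness and regularity. The paper dismisses uniqueness in one sentence via ``a standard contradiction argument as the HJB equation is assumed to possess a unique solution,'' which is loose: the equation at hand is the \emph{GHJB} equation for a fixed $u$, not the HJB equation, so that appeal does not literally apply. Your argument --- setting $W:=V_1-V_2$, obtaining $\frac{d}{dt}W(y(t))=\mu W(y(t))$ along characteristics, hence $W(y(t))=e^{\mu t}W(x)$, and using boundedness of $W$ on the compact $\Omega$ to force $W(x)=0$ --- is self-contained, exploits the discount $\mu>0$ exactly where it matters, and is more rigorous than the paper's; its caveat (forward trajectories must remain in $\Omega$, which you flag) is equally implicit in the paper's own construction of $V$. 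On regularity you also correctly identify a genuine weakness that the paper glosses over: the paper's step ``since $u$, $f$, and $V$ are continuous, $V$ is continuously differentiable'' is a non sequitur, and differentiating $V(x)=\int_0^\infty e^{-\mu s}g(y(s;x))\,ds$ under the integral sign does require at least local Lipschitz continuity of $u$ so that the flow $x\mapsto y(s;x)$ is differentiable. Your proposed remedies (restricting attention to the controls the algorithm actually produces, which are polynomial gradients and hence smooth, or mollifying $u$ and passing to the limit) would close this gap; note, however, that neither your sketch nor the paper carries this step out in full, so strictly speaking both arguments deliver $C^1$ regularity only under hypotheses on $u$ stronger than mere continuity.
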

\begin{proof}
For any admissible control $u\in\mathcal{A}(\Omega)$, given $x\in\Omega$, \Cref{ass_convergence}\ref{1} guarantees that the solution to the HJB equation (\ref{HJB}) and the trajectory $y(t)\equiv y(t;x,u)$ of the system (\ref{control}) are well-defined \cite[Chapter 8.8]{bardi1997optimal}. Now, define $V(x)= \int_0^{\infty}e^{-\mu t} 
\left(f\left(y(t)\right)+\frac{\epsilon}{2}|u(t)|^2\right) d t$,  for any $t\geq 0,$ we have 
$$V\left(x\right) 
=\int_0^t e^{-\mu s} \left(f(y(s))+ \frac{\epsilon}{2}|u(s)|^2\right) d s+e^{-\mu t} V\left(y(t)\right).
$$
Since the map $t \mapsto V\left(y(t)\right)$ is absolutely continuous, rearranging above for $V(y(t))$ and differentiating yields
\begin{equation}
\frac{\partial}{\partial t}V\left(y(t)\right)  
=\mu V\left(y(t)\right)-\left(f(y(t))+ \frac{\epsilon}{2}|u(t)|^2\right).
\label{V_dot}\end{equation}
Combining with the fact $\frac{\partial}{\partial t}V=\frac{\partial V^\top}{\partial y} \dot{y}=D V u $, we obtain that $V$ satisfies the GHJB equation $\mathcal{G}_{\mu}(V,DV;u)=-\mu V+DV u+f+\frac{\epsilon}{2}|u|^2=0$. Since  $u$ is continuous by the admissibility assumption, $f$ is continuous by \Cref{ass_convergence}\ref{1}, and $V$ is continuous by definition,  then $V$ is continuously differentiable. Uniqueness follows using a standard contradiction argument as the HJB equation is assumed to possess a unique solution.
\end{proof}

\begin{figure}  \centering\includegraphics[scale=0.4]{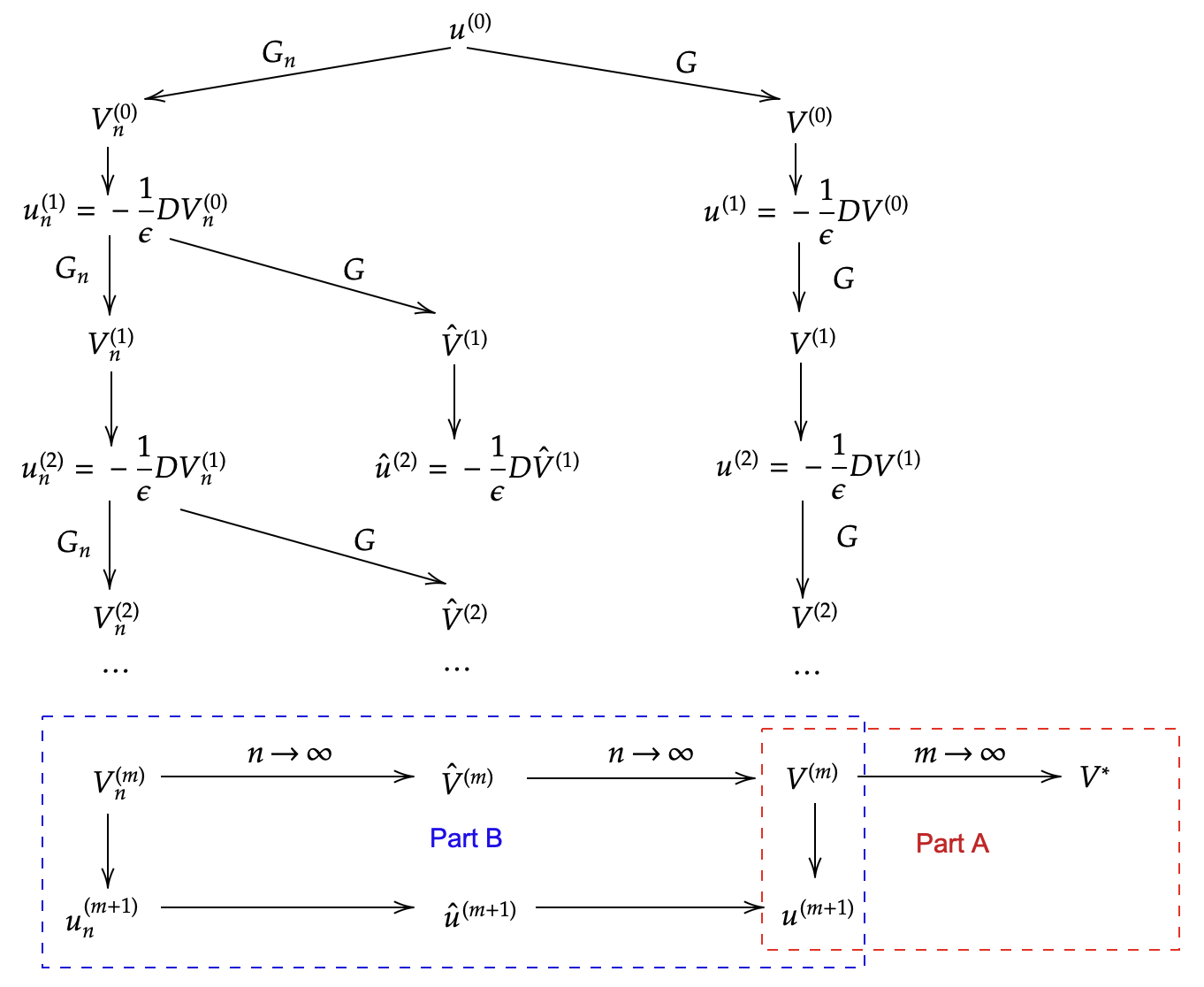}
  \caption{Convergence sketch of \Cref{alg1}: $G_n:\mathcal{A}(\Omega)\rightarrow \operatorname{span}\{\phi_i\}_{i=1}^n$ is an operator that maps any admissible control $u$ to $V_n =\sum_{i=1}^n c_i \phi_i$ satisfying $\left\langle\mathcal{G}_\mu\left(V_n, DV_n; u\right), \Phi_n\right\rangle=0$. Similarly $G$ maps admissible control onto solutions of the GHJB equation, i.e., $V=G u$ implies that $\mathcal{G}_\mu\left(V, DV; u\right)=0$.}
  \label{ill_convergence}
\end{figure}
To prove Proposition \ref{main}, note that 
\begin{displaymath}
|V_n^{(m)}-V^*|_{L^2(\Omega)} \leq|V_n^{(m)}-V^{(m)}|_{L^2(\Omega)}+|V^{(m)}-V^{*}|_{L^2(\Omega)},
\end{displaymath}
where $V^{(m)}=\sum_{i=1}^\infty c_i^{(m)}\phi_i$ is the  solution of the GHJB equation  given $u^{(m)}$, i.e., $\mathcal{G}_\mu\left(V^{(m)}, DV^{(m)};u^{(m)}\right)=0$. The convergence of the \Cref{alg1} consists of two parts (A and B), as  illustrated in \Cref{ill_convergence}. 

\textbf{Part A} covers $V^{(m)}\rightarrow V^*$, as the number of iterations $m\rightarrow\infty$. We will follow the steps in \cite[Proposition 1]{kundu2024policy} to prove  $V^*\leq V^{(m+1)}\leq V^{(m)}$, which implies that according to \Cref{alg1}, the updated control improves the performance of the system at every iteration. 

\textbf{Part B} is the convergence of the Galerkin approximation $V_n^{(m)}\rightarrow V^{(m)}$,  as the degree of approximation $n\rightarrow\infty$. 
 Note that 
\begin{equation}
\left|V_n^{(m)}-V^{(m)}\right|_{L_2(\Omega)} \leq\left|V_n^{(m)}-\hat{V}^{(m)}\right|_{L_2(\Omega)}+\left|\hat{V}^{(m)}-V^{(m)}\right|_{L_2(\Omega)}, 
\label{eq:convergence_V}
\end{equation}
where $V_n^{(m)}=\sum_{i=1}^n c_i^{(m)}\phi_i$ is the solution to the projected GHJB equation with the approximate control $u_n^{(m)}$, i.e.,
$\left\langle\mathcal{G}_\mu\left(V_n^{(m)}, DV_n^{(m)}; u_n^{(m)}\right), \Phi_n\right\rangle=0,$  and $\hat{V}^{(m)}=\sum_{i=1}^\infty \hat{c}_i^{(m)}\phi_i$ is the solution to the GHJB equation $\mathcal{G}_\mu\left(\hat{V}^{(m)}, D\hat{V}^{(m)};u_n^{(m)}\right)=0.$
Note that  $\hat{V}^{(m)}$ depends on $n$ through $u_n^{(m)}$ and it is an ancillary variable for the proof,
 not be used in practice. 

\subsection{Part A}
\label{sec:part A}
We present a similar result to \cite[Proposition 1]{kundu2024policy}.
\begin{proposition}
Assume \Cref{ass_convergence}\ref{1} and \ref{2} hold, given $u^{(0)} \in \mathcal{A}(\Omega)$, then $u^{(m+1)}=-\frac{1}{\epsilon}D V^{(m)}\in \mathcal{A}(\Omega)$ for all $m$, and we have $V^{(m+1)}(x) \leq V^{(m)}(x)$ for any $x\in\Omega$.
\label{partA}
\end{proposition}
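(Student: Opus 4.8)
The plan is to argue by induction on $m$, exploiting the representation of the GHJB solution furnished by \Cref{lemma_existence} together with the fact that the update $u^{(m+1)}=-\frac1\epsilon DV^{(m)}$ is precisely the pointwise minimizer of the Hamiltonian $H(x,p,v):=p^\top v+f(x)+\frac\epsilon2|v|^2$ evaluated at $p=DV^{(m)}(x)$. The base case $u^{(0)}\in\mathcal{A}(\Omega)$ is \Cref{ass_convergence}\ref{2}. For the inductive step I assume $u^{(m)}\in\mathcal{A}(\Omega)$, so that by \Cref{lemma_existence} the GHJB solution $V^{(m)}$ exists, is continuously differentiable, admits the representation $V^{(m)}(x)=\mathcal{J}(u^{(m)}(\cdot),x)$, and satisfies $-\mu V^{(m)}+(DV^{(m)})^\top u^{(m)}+f+\frac\epsilon2|u^{(m)}|^2=0$. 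Continuity of $u^{(m+1)}=-\frac1\epsilon DV^{(m)}$ is then immediate from the $C^1$ regularity of $V^{(m)}$; the remaining work is to establish the finite-cost condition $\mathcal{J}(u^{(m+1)}(\cdot),x)<\infty$ and the monotonicity $V^{(m+1)}\le V^{(m)}$ simultaneously.

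The core of the argument is a differential inequality along the trajectory $y(\cdot)$ generated by the \emph{new} feedback law, i.e.\ $\dot y=u^{(m+1)}(y)$, $y(0)=x$. Because $u^{(m+1)}(y)$ minimizes $H(y,DV^{(m)}(y),\cdot)$, I have the pointwise bound $(DV^{(m)})^\top u^{(m+1)}+f+\frac\epsilon2|u^{(m+1)}|^2\le (DV^{(m)})^\top u^{(m)}+f+\frac\epsilon2|u^{(m)}|^2=\mu V^{(m)}$, the last equality being the GHJB identity for $V^{(m)}$. Differentiating $t\mapsto e^{-\mu t}V^{(m)}(y(t))$ and inserting this bound yields
\begin{equation*}
\frac{d}{dt}\big(e^{-\mu t}V^{(m)}(y(t))\big)=e^{-\mu t}\big(-\mu V^{(m)}+(DV^{(m)})^\top u^{(m+1)}\big)(y(t))\le -e^{-\mu t}\Big(f+\tfrac\epsilon2|u^{(m+1)}|^2\Big)(y(t)).
\end{equation*}
Integrating from $0$ to an arbitrary finite $T$ and rearranging gives $\int_0^T e^{-\mu t}(f+\frac\epsilon2|u^{(m+1)}|^2)(y(t))\,dt\le V^{(m)}(x)-e^{-\mu T}V^{(m)}(y(T))$.

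To close the induction I use the sign structure coming from \Cref{ass_convergence}\ref{1}: since $f\ge0$ on $\Omega$, the representation formula forces $V^{(m)}\ge0$, so the boundary term $e^{-\mu T}V^{(m)}(y(T))$ is non-negative and may be dropped, leaving the partial integrals bounded by $V^{(m)}(x)$ uniformly in $T$. As the integrand is non-negative, the monotone convergence theorem lets me pass $T\to\infty$ and conclude $\mathcal{J}(u^{(m+1)}(\cdot),x)=\int_0^\infty e^{-\mu t}(f+\frac\epsilon2|u^{(m+1)}|^2)\,dt\le V^{(m)}(x)<\infty$. This simultaneously establishes admissibility of $u^{(m+1)}$ (continuity plus finite cost) and, since $\mathcal{J}(u^{(m+1)}(\cdot),x)=V^{(m+1)}(x)$ by \Cref{lemma_existence}, the desired inequality $V^{(m+1)}(x)\le V^{(m)}(x)$.

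I expect the main obstacle to be the apparent circularity in the admissibility claim: the identity $V^{(m+1)}=\mathcal{J}(u^{(m+1)},\cdot)$ and the very finiteness of this cost both presuppose that $u^{(m+1)}$ is admissible, which is exactly what is to be proved. The resolution, as above, is to avoid invoking $V^{(m+1)}$ directly and instead work with the finite-horizon integrals of the non-negative running cost along the closed-loop trajectory, using only $f\ge0$ (hence $V^{(m)}\ge0$) and the boundedness of $V^{(m)}$ on the compact set $\Omega$ to control the boundary term; monotone convergence then delivers finiteness and monotonicity at once. A secondary technical point is ensuring that the closed-loop trajectory $\dot y=u^{(m+1)}(y)$ remains within $\Omega$ so that $f\ge0$ and the regularity of $V^{(m)}$ apply throughout, which is consistent with the standing convention (noted after \Cref{ass_convergence}) that $\Omega$ is taken large enough and the admissible controls keep trajectories inside it.
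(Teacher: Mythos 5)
Your proof is correct, and it rests on the same core ingredients as the paper's---induction on $m$, \Cref{lemma_existence} for existence, $C^1$ regularity and the representation of $V^{(m)}$, and integration of the GHJB identity along the closed-loop trajectory $\dot y=u^{(m+1)}(y)$---but the execution differs in two ways. For admissibility, the paper argues more cheaply: $f$ and $u^{(m+1)}=-\frac{1}{\epsilon}DV^{(m)}$ are continuous on the compact set $\Omega$, hence bounded, so the discounted cost is finite without any dynamic argument; you instead extract finiteness from the truncated estimate $\int_0^T e^{-\mu t}\bigl(f+\frac{\epsilon}{2}|u^{(m+1)}|^2\bigr)\,dt\le V^{(m)}(x)$ plus monotone convergence, which is more work but resolves the circularity you flagged and delivers admissibility and monotonicity in a single stroke. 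For monotonicity, the paper does not settle for an inequality: it represents both $V^{(m)}$ and $V^{(m+1)}$ as integrals along the same trajectory $y^{(m+1)}$ and computes the exact difference
\begin{equation*}
V^{(m+1)}(x)-V^{(m)}(x)=-\int_0^{\infty}e^{-\mu t}\,\tfrac{\epsilon}{2}\bigl|u^{(m+1)}-u^{(m)}\bigr|^2\,dt\le 0,
\end{equation*}
of which your pointwise Hamiltonian-minimization bound is exactly the completed-square relaxation. The paper's identity is slightly stronger, quantifying the decrease and showing it is strict unless $u^{(m+1)}=u^{(m)}$ (useful for the subsequent convergence analysis of the iteration), while your verification-style argument is the classical policy-improvement proof and is more careful about integrability issues, working on finite horizons with a nonnegative boundary term rather than assuming the infinite-horizon integrals and vanishing boundary terms upfront. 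Both arguments share the same implicit assumption, acknowledged by the paper and by you, that the closed-loop trajectories remain in $\Omega$.
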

\begin{proof}
We proceed by induction method. Given 
$u^{(0)} \in \mathcal{A}(\Omega)$, we assume $u^{(m)}$ is admissible, then by Lemma \ref{lemma_existence}, we know that there exists a unique solution $V^{(m)}$ to equation $\mathcal{G}_\mu(V^{(m)},DV^{(m)},u^{(m)})=0$ and $V^{(m)}$ is continuously differentiable,  so $u^{(m+1)}=-\frac{1}{\epsilon}DV^{(m)}$ is continuous. Due to   \Cref{ass_convergence}\ref{1}, the function $f$ and $u^{(m+1)}$ are  bounded on $\Omega$, then
$\int_0^\infty e^{-\mu t} (f(y(t))+|u^{(m+1)}(t)|^2)dt$ is bounded above.
Therefore $u^{(m+1)}$ is admissible. Then, for every $x\in \Omega$, we denote the trajectory of system (\ref{control}) as $y^{(m+1)}_t\equiv y(t;x,u^{(m+1)})$.
Since $V^{(m)}$ is a solution to $\mathcal{G}_\mu(V^{(m)},DV^{(m)};u^{(m)})$, and $u(\cdot)$ is in feedback form, we obtain 
$$
\begin{aligned}V^{(m)}(x)&=\int_0^{\infty}e^{-\mu t} 
\left(f\left(y^{(m+1)}_t\right)+\frac{\epsilon}{2}|u^{(m)}(y^{(m+1)}_t)|^2\right) d t\\
&= \int_0^{\infty}e^{-\mu t} 
\left(\mu V^{(m)}(y^{(m+1)}_t)-D V^{(m)}(y^{(m+1)}_t) u^{(m+1)}\right) d t,\end{aligned}$$
Similarly for the solution $V^{(m+1)}$ of $\mathcal{G}_\mu(V^{(m+1)},DV^{(m+1)};u^{(m+1)})$:
$$V^{(m+1)}(x)=\int_0^{\infty}e^{-\mu t} 
\left(\mu V^{(m+1)}(y^{(m+1)}_t)-D V^{(m+1)}(y^{(m+1)}_t) u^{(m+1)}\right)  d t.
$$
Rearranging the terms in $\mathcal{G}_\mu(V^{(m)},DV^{(m)};u^{(m)})$ and $\mathcal{G}_\mu(V^{(m+1)},DV^{(m+1)};u^{(m+1)})$, and substituting  into the difference of $V^{(m+1)}(x)$ and $V^{(m)}(x)$, as $u^{(m+1)}=-\frac{1}{\epsilon}D V^{(m)}$, we have $V^{(m+1)}(x)-V^{(m)}(x)$
\begin{displaymath}
\begin{aligned}
&=\int_0^{\infty}e^{-\mu t}\left(\frac{\epsilon}{2}|u^{(m+1)}|^2-\frac{\epsilon}{2}|u^{(m)}|^2-\epsilon u^{(m+1)}\left(u^{(m+1)}-u^{(m)}\right)\right) d t\\
&=-\int_0^{\infty}e^{-\mu t}\left(\frac{\epsilon}{2}|u^{(m+1)}-u^{(m)}|^2\right) d t\leq 0.
\end{aligned}
\end{displaymath}
\end{proof}
\Cref{partA} and \Cref{lemma_existence} imply that  for any $m$, given $u^{(m)}$, there exists  a $V^{(m)}=\sum_{i=1}^{\infty} c_i^{(m)} \phi_i$ such that $\mathcal{G}_\mu\left(V^{(m)}, DV^{(m)}; u^{(m)}\right)=0$, and
$V^{(m)}$ converges to $V^*$ uniformly. Additionally, $u^{(m+1)}=-\frac{1}{\epsilon}DV^{(m)} \in\mathcal{A}(\Omega)$.

\subsection{Part B}\label{sec:conv_part_B}

For any  admissible control $u\in\mathcal{A}(\Omega)$, we denote the actual solution $\hat{V}=\sum_{i=1}^\infty \hat{c}_i\phi_i$ satisfying $
\mathcal{G}_\mu(\hat{V},D\hat{V}; u)=0
$, and the approximation $V_n=\sum_{i=1}^n c_i\phi_i$ satisfying $
\langle\mathcal{G}_\mu(V_n,DV_n; u),\Phi_n\rangle=0
$. The latter is  a weaker condition than $\mathcal{G}_\mu\left(V_n, DV_n ; u\right)=0$,  ensuring the error of approximation, projected onto $\operatorname{span}\left\{\phi_i\right\}_{i=1}^n$, is zero.
As mentioned earlier, the convergence of Galerkin approximation has been thoroughly studied  for the un-discounted infinite horizon case in \cite{beard1995improving,beard1997galerkin, beard1998approximate} with GHJB equation $\mathcal{G}_0(V, D V ; u):=D V^\top u+f+\frac{\epsilon}{2}|u|^2.$ Therefore 
the results in this part can be proved by following the proof techniques in \cite{beard1995improving,beard1997galerkin, beard1998approximate}, with adjustments made for the different form of GHJB equation due to the presence of $\mu V$. For the convenience of the reader, we present the results and detailed proofs. In the following proofs, we assume $\{\phi_i\}_{i=1}^\infty$ is orthonormal as in \cite[Lemma 15, 16]{beard1997galerkin} and \cite[Lemma 5.2.9 ]{beard1995improving}.
\begin{lemma}
\label{convergence_GHJB}
Given $u \in \mathcal{A}(\Omega)$, under the Assumption \ref{1}-\ref{4}, \ref{c_bdd} and \ref{6}, we have
$
\left|\mathcal{G}_\mu\left(V_n, DV_n; u\right)\right| \rightarrow 0 
$
uniformly on $\Omega$ as $n\rightarrow\infty$. 
\end{lemma}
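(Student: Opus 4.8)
The plan is to use the orthonormality of $\{\phi_i\}_{i=1}^\infty$ together with Galerkin orthogonality to represent the residual $g_n := \mathcal{G}_\mu(V_n, DV_n; u)$ as a pure high-frequency tail, and then to show that each piece of this tail vanishes uniformly by means of the pointwise decreasing hypothesis. Since $\{\phi_i\}$ is a complete orthonormal system in $\mathcal{L}^2(\Omega)$, I first expand $g_n = \sum_{i=1}^\infty \langle g_n, \phi_i\rangle\,\phi_i$. The defining relations $\langle \mathcal{G}_\mu(V_n, DV_n; u),\phi_i\rangle = 0$ for $1\le i\le n$ annihilate the first $n$ Fourier coefficients, so that $g_n = \sum_{i=n+1}^\infty \langle g_n,\phi_i\rangle\,\phi_i$.

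Next I would split $g_n$ according to the three structural terms of $\mathcal{G}_\mu(V,DV;u) = -\mu V + DV^\top u + f + \frac{\epsilon}{2}|u|^2$. Because $V_n\in\operatorname{span}\{\phi_i\}_{i=1}^n$, orthonormality forces $\sum_{i=n+1}^\infty\langle -\mu V_n,\phi_i\rangle\,\phi_i = 0$; thus the discount term contributes nothing to the tail, which explains why the extra $\mu V$ term causes no additional difficulty relative to the un-discounted analysis of \cite{beard1995improving,beard1997galerkin,beard1998approximate}. It then remains to control the two tails
$$\sum_{i=n+1}^\infty \Big\langle f + \frac{\epsilon}{2}|u|^2,\ \phi_i\Big\rangle\,\phi_i \qquad\text{and}\qquad \sum_{i=n+1}^\infty \big\langle (DV_n)^\top u,\ \phi_i\big\rangle\,\phi_i .$$

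For the source tail, \Cref{ass_convergence}\ref{4} guarantees $f + \frac{\epsilon}{2}|u|^2\in\operatorname{span}\{\phi_i\}_{i=1}^\infty$, so its Fourier series converges pointwise on $\Omega$, and the pointwise decreasing property in \Cref{ass_convergence}\ref{6} upgrades this to uniform convergence of the tail; hence the first term tends to $0$ uniformly. The drift tail is the crux of the argument. Writing $(DV_n)^\top u = \sum_{k=1}^n c_k\,(D\phi_k)^\top u$, each summand $(D\phi_k)^\top u$ lies in $\operatorname{span}\{\phi_i\}_{i=1}^\infty$ by \Cref{ass_convergence}\ref{4} and has a pointwise decreasing expansion by \Cref{ass_convergence}\ref{6}. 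The difficulty is that the number of summands grows with $n$ at exactly the same rate as the truncation level, so a crude bound of the form $\sum_{k=1}^n|c_k|\,\|\cdot\|_\infty$ need not close. To overcome this I would invoke the uniform boundedness of the coefficients $\{c_k\}$ provided by \Cref{ass_convergence}\ref{c_bdd} and apply the pointwise decreasing estimate to the coefficient-weighted aggregate tail, following \cite[Lemma~15--16]{beard1997galerkin} and \cite[Lemma~5.2.9]{beard1995improving} adapted to the discounted operator. Combining the two uniform limits then yields $|g_n|\to 0$ uniformly on $\Omega$. The main obstacle is precisely this coupling between the growing number of terms and the growing truncation index in the drift tail: the pointwise decreasing hypothesis is designed to resolve exactly this, and the delicate point is verifying that it applies to the $c_k$-weighted sum rather than to a single fixed expansion.
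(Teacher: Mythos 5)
Your proposal is correct and is essentially the paper's own proof: expand the residual in the orthonormal basis, use the Galerkin relations to annihilate the first $n$ coefficients, and control the remaining source tail and coefficient-weighted drift tail via the pointwise-decreasing hypothesis \ref{6} and the uniform coefficient bound \ref{c_bdd}, deferring the tail-uniformity step to Beard's lemmas exactly as the paper does. Your additional observation that the $-\mu V_n$ term contributes nothing to the tail is implicitly present in the paper's estimate, since there $\left\langle -\mu\phi_k,\phi_i\right\rangle = 0$ for $k\le n<i$ by orthonormality, so the two arguments coincide in substance.
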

\begin{proof}
The proof follows  \cite[Lemma 5.2.13]{beard1995improving}, \cite[Lemma 20]{beard1997galerkin}. First, note that the Assumption \ref{4} implies $\mathcal{G}_\mu\left(V_n,DV_n; u\right) \in \operatorname{span}\left\{\phi_i\right\}_{i=1}^{\infty}$. As we assume $\left\{\phi_i\right\}_{i=1}^{\infty}$ is orthonormal, we have
\begin{displaymath}\begin{aligned}
&\left|\mathcal{G}_\mu\left(V_n,DV_n; u\right)\right|= \left|\sum_{i=1}^{\infty}\left\langle\mathcal{G}_\mu\left(V_n,DV_n; u\right), \phi_i\right\rangle \phi_i\right| \\ 
& \leq\left|\sum_{i=1}^{n}\left\langle\mathcal{G}_\mu\left(V_n,DV_n; u\right), \phi_i\right\rangle \phi_i+\sum_{i=n+1}^{\infty}\left[\left\langle -\mu V_n+DV_n u, \phi_i\right\rangle +\left\langle f+\frac{\epsilon}{2}|u|^2, \phi_i\right\rangle\right]\phi_i \right|\\
& =\left|\sum_{i=n+1}^{\infty}\left[\sum_{k=1}^n c_k\left\langle -\mu \phi_k+\frac{\partial \phi_k}{\partial x}u, \phi_i\right\rangle\phi_i\right] +\sum_{i=n+1}^{\infty}\left\langle f+\frac{\epsilon}{2}|u|^2, \phi_i\right\rangle\phi_i \right|\\
&\leq GH+I,
\end{aligned}\end{displaymath}
where
\begin{displaymath}
\begin{gathered}
G := \sup _{k=1,\cdots,n}\left|c_k\right|,\qquad H:= \sup _{k=1,\cdots,n}\left|\sum_{i=n+1}^{\infty}\left\langle -\mu \phi_k+\frac{\partial \phi_k}{\partial x}u, \phi_i\right\rangle\phi_i\right|,\\
I:=\left|\sum_{i=n+1}^{\infty}\left\langle f+\frac{\epsilon}{2}|u|^2, \phi_i\right\rangle \phi_i\right|.
\end{gathered}
\end{displaymath}
Since $\sum_{i=n+1}^{\infty}\left\langle -\mu \phi_k+\frac{\partial \phi_k^{\top}}{\partial x}u, \phi_i\right\rangle \phi_i(x)$ and $\sum_{i=n+1}^{\infty}\left\langle f+\frac{\epsilon}{2}|u|^2, \phi_i\right\rangle \phi_i(x)$ are continuous and point-wise decreasing by \Cref{ass_convergence}\ref{6}, \cite[Lemma 5.2.12]{beard1995improving} implies  that $H$ and $I$ converge to 0 uniformly as $n\rightarrow\infty$. Combined with \Cref{ass_convergence}\ref{c_bdd}, the term $G$ is uniformly bounded for all $n$, then the desired result follows.
\end{proof}

\begin{lemma}
\label{convergence_c}
Given $u \in \mathcal{A}(\Omega)$, under the assumptions of \Cref{convergence_GHJB} and \Cref{ass_convergence}\ref{new}, we have $
\left|\boldsymbol{c}_n-\hat{\boldsymbol{c}}_n\right| \rightarrow 0$ as $n\rightarrow\infty$, where $\mathbf{c}_n=\{c_i\}_{i=1}^{n}, \;\hat{\mathbf{c}}_n=\{\hat{c}_i\}_{i=1}^{n}$.
\end{lemma}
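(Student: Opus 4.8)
The plan is to bound the coefficient error through the \emph{function} error $V_n-\hat V$, exploiting the linearity of $\mathcal G_\mu$ in $(V,DV)$ together with the strictly positive discount $\mu$. First I would set $e_n:=V_n-\hat V$ and subtract the two defining relations. Because the forcing term $f+\tfrac{\epsilon}{2}|u|^2$ does not depend on $(V,DV)$, it cancels, leaving the first-order linear relation $-\mu e_n+De_n^\top u=\mathcal G_\mu(V_n,DV_n;u)=:\eta_n$ on $\Omega$, where I used $\mathcal G_\mu(\hat V,D\hat V;u)=0$. By \Cref{convergence_GHJB}, $\|\eta_n\|_{L^\infty(\Omega)}\to0$ as $n\to\infty$, so it remains to turn this residual bound into control of $e_n$ itself.

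Next I would represent $e_n$ along the characteristics of the control field. Fix $x\in\Omega$ and let $y(t)\equiv y(t;x,u)$ solve $\dot y=u(y)$, $y(0)=x$; by admissibility and the choice of a sufficiently large $\Omega$ the trajectory remains in $\Omega$. Then $\tfrac{d}{dt}\bigl(e^{-\mu t}e_n(y(t))\bigr)=e^{-\mu t}\bigl(De_n(y(t))^\top u(y(t))-\mu e_n(y(t))\bigr)=e^{-\mu t}\eta_n(y(t))$. Since $V_n$ is a polynomial with bounded coefficients (\Cref{ass_convergence}\ref{c_bdd}) and $\hat V$ is continuous, $e_n$ is bounded on the compact set $\Omega$, so $e^{-\mu t}e_n(y(t))\to0$ as $t\to\infty$; integrating from $0$ to $\infty$ gives $e_n(x)=-\int_0^\infty e^{-\mu t}\eta_n(y(t))\,dt$ and hence the uniform estimate $\|e_n\|_{L^\infty(\Omega)}\le \mu^{-1}\|\eta_n\|_{L^\infty(\Omega)}\to0$.

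Finally, orthonormality of $\{\phi_i\}$ converts this function estimate into the desired coefficient estimate: writing $V_n-\hat V=\sum_{i=1}^n(c_i-\hat c_i)\phi_i-\sum_{i>n}\hat c_i\phi_i$, Parseval's identity gives $|\mathbf c_n-\hat{\mathbf c}_n|^2\le\|V_n-\hat V\|_{L^2(\Omega)}^2\le|\Omega|\,\|e_n\|_{L^\infty(\Omega)}^2\to0$, which is exactly the claim. In this route \Cref{ass_convergence}\ref{new} enters only to guarantee that the projected (Galerkin) operator is non-degenerate, so that $\mathbf c_n$, and therefore $e_n$, is well defined for every $n$.

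A more algebraic alternative, closer to \cite{beard1997galerkin,beard1995improving}, is to project both defining relations onto $\{\phi_i\}_{i=1}^n$ and obtain the linear system $A_n(\mathbf c_n-\hat{\mathbf c}_n)=\mathbf r_n$, with $(A_n)_{ik}=\langle -\mu\phi_k+\frac{\partial\phi_k}{\partial x}\cdot u,\phi_i\rangle$ and $\mathbf r_n$ collecting the tail $\sum_{k>n}\hat c_k(-\mu\phi_k+\frac{\partial\phi_k}{\partial x}\cdot u)$. Here $\|\mathbf r_n\|\to0$ follows from the uniform convergence of $\sum\hat c_k\phi_k$ and $\sum\hat c_k\frac{\partial\phi_k}{\partial x}$ in \Cref{ass_convergence}\ref{5} together with the boundedness of $u$ on $\Omega$. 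The delicate point in this route—indeed the main obstacle—is a lower bound on $A_n$ uniform in $n$ (equivalently, coercivity of $\{-\mu\phi_k+\frac{\partial\phi_k}{\partial x}\cdot u\}_k$), which is precisely what the non-degeneracy in \Cref{ass_convergence}\ref{new} is meant to supply; I would therefore favour the characteristic representation above, whose only genuine delicacy is the confinement of trajectories to $\Omega$, and which sidesteps the uniform invertibility of $A_n$ entirely.
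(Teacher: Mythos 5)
Your main argument is correct in substance, but it takes a genuinely different route from the paper: the paper's proof is, almost verbatim, your ``algebraic alternative''. The paper subtracts the two defining relations to obtain $\left(\boldsymbol{c}_n-\hat{\boldsymbol{c}}_n\right)^\top(-\mu\Phi_n+\nabla\Phi_n u)=\mathcal{G}_\mu\left(V_n,DV_n;u\right)+\sum_{i=n+1}^{\infty}\hat{c}_i\left(-\mu\phi_i+\frac{\partial\phi_i}{\partial x}u\right)$, bounds the right-hand side in $L^2(\Omega)$ via the mean value theorem, \Cref{convergence_GHJB} and the tail estimate supplied by \Cref{ass_convergence}\ref{5}, and then invokes the linear independence of $\{-\mu\phi_i+\frac{\partial\phi_i}{\partial x}u\}_{i=1}^\infty$ from \Cref{ass_convergence}\ref{new} to pass from smallness of the linear combination to smallness of $|\boldsymbol{c}_n-\hat{\boldsymbol{c}}_n|$. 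As you observe, that last step is the delicate one: independence for each fixed $n$ gives a constant $\kappa_n>0$ with $\left|\mathbf{a}^\top(-\mu\Phi_n+\nabla\Phi_n u)\right|_{L^2(\Omega)}\geq\kappa_n|\mathbf{a}|$, but the paper does not address whether $\kappa_n$ stays bounded away from zero as $n\to\infty$. Your characteristic representation buys exactly this: the discount factor yields $\|V_n-\hat V\|_{L^\infty(\Omega)}\leq\mu^{-1}\|\mathcal{G}_\mu(V_n,DV_n;u)\|_{L^\infty(\Omega)}$ with no non-degeneracy condition at all, Parseval then converts the function estimate into the coefficient estimate, and as a by-product you get a uniform bound on $V_n-\hat V$ that immediately implies (indeed strengthens) \Cref{convergence_V}. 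The price is forward invariance of $\Omega$ under $\dot y=u(y)$: this is not part of \Cref{def:admissible}, so strictly speaking your proof uses a hypothesis beyond those stated in the lemma. The paper, however, makes the same assumption informally (see the discussion following \Cref{ass_convergence}) and relies on it implicitly in the trajectory-based arguments of \Cref{lemma_existence} and \Cref{partA}, so your argument is sound within the paper's standing conventions; if you wanted it self-contained, you would need to add the invariance of $\Omega$ (or an extension of $u$, $f$, $\hat V$ outside $\Omega$) as an explicit hypothesis.
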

\begin{proof}
The proof follows  \cite[Lemma 22]{beard1997galerkin} and \cite[Lemma 5.2.15]{beard1995improving}. 
Since for any $u\in\mathcal{A}(\Omega)$, 
$\mathcal{G}_\mu(\hat{V},D\hat{V};u)=0$, we have
\begin{displaymath}
\begin{aligned}
\mathcal{G}_\mu\left(V_n,DV_n; u\right)-\mathcal{G}_\mu(\hat{V},D\hat{V}; u)&=\mathcal{G}_\mu\left(V_n,DV_n; u\right) \\
\left(\boldsymbol{c}_n -\hat{\boldsymbol{c}}_n\right)^\top(-\mu\Phi_n+
\nabla  \Phi_n u)&=\mathcal{G}_\mu\left(V_n,DV_n; u\right)+\sum_{i=n+1}^{\infty} \hat{c}_i (-\mu \phi_i+\frac{\partial \phi_i}{\partial x}u)
\end{aligned}
\end{displaymath}
By the mean value theorem, there exists  $\xi \in \Omega$ such that
\begin{displaymath}
\begin{aligned}
&\left|\left(\boldsymbol{c}_n -\hat{\boldsymbol{c}}_n\right)^\top(-\mu\Phi_n+
\nabla  \Phi_n u)\right|^2_{L_2(\Omega)}\\
=&\int_{\Omega}\left|\mathcal{G}_\mu\left(V_n,DV_n; u\right)(x)+\sum_{i=n+1}^{\infty} \hat{c}_i \left(-\mu \phi_i+\frac{\partial \phi_i}{\partial x} u\right)(x)\right|^2 d x \\
\leq & |\Omega|\left(2\left|\mathcal{G}_\mu\left(V_n,DV_n; u\right)(\xi)\right|^2+2\left|\sum_{i=n+1}^{\infty} \hat{c}_i \left(-\mu \phi_i+\frac{\partial \phi_i}{\partial x} u\right)(\xi)\right|^2\right),
\end{aligned}
\end{displaymath}
where $|\Omega|$ is the Lebesgue measure of $\Omega$. Lemma \ref{convergence_GHJB} implies that for $\forall \delta>0, \exists K_1$ such that
as $n>K_1$ we have $|\mathcal{G}_\mu\left(V_n,DV_n; u\right)(x)|<\frac{\delta}{\sqrt{|\Omega|}}.$
Since $\sum_{i=1}^{\infty} \hat{c}_i\left(-\mu \phi_i+ \frac{\partial \phi_i^\top}{\partial x} u\right)=-f-\frac{\epsilon}{2}|u|^2
$, then for $\forall \delta>0,$ under \Cref{ass_convergence}\ref{5}, $\exists K_2$ such that as $n>K_2$, we have 
$\left|\sum_{i=n+1}^{\infty} \hat{c}_i \left(-\mu \phi_i+\frac{\partial \phi_i^\top}{\partial x} u\right)(x)\right|<\frac{\delta}{\sqrt{|\Omega|}}.$
Thus, as $n\rightarrow\infty$, we obtain
\begin{equation}
\label{cphi_0}
\left|\left(\boldsymbol{c}_n -\hat{\boldsymbol{c}}_n\right)^\top(-\mu\Phi_n+
\nabla\Phi_n u)\right|^2_{L^2(\Omega)}\rightarrow0 \text{  uniformly on  }\Omega.
\end{equation}
By \Cref{ass_convergence}\ref{new}, we know $-\mu \phi_i+\frac{\partial \phi_i}{\partial x} u\not\equiv 0$  and $\{-\mu \phi_i+\frac{\partial \phi_i}{\partial x} u\}_{i=1}^\infty$ is linearly independent. Thus \Cref{cphi_0} implies the desired results $\left|\boldsymbol{c}_n-\hat{\boldsymbol{c}}_n\right| \rightarrow 0$.
\end{proof}

\begin{corollary}
\label{convergence_V}
Under the assumptions of  \Cref{convergence_c}, then
$ |V_n-\hat{V}|_{L_2(\Omega)} \rightarrow 0.  $
\end{corollary}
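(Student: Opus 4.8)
The plan is to exploit the orthonormality of the basis $\{\phi_i\}_{i=1}^\infty$, which is assumed throughout this part, to reduce the $L_2$ error to two contributions that are already controlled. First I would write the difference in terms of the coefficient sequences,
$$
V_n-\hat{V}=\sum_{i=1}^n\left(c_i-\hat{c}_i\right)\phi_i-\sum_{i=n+1}^{\infty}\hat{c}_i\,\phi_i,
$$
thereby separating the part supported on $\operatorname{span}\{\phi_i\}_{i=1}^n$ from the tail of $\hat{V}$. Because these two groups of terms live on disjoint (hence orthogonal) index sets, the Pythagorean identity for an orthonormal system gives
$$
|V_n-\hat{V}|_{L_2(\Omega)}^2=\sum_{i=1}^n\left|c_i-\hat{c}_i\right|^2+\sum_{i=n+1}^{\infty}\left|\hat{c}_i\right|^2=\left|\boldsymbol{c}_n-\hat{\boldsymbol{c}}_n\right|^2+\sum_{i=n+1}^{\infty}\left|\hat{c}_i\right|^2.
$$

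The first summand vanishes in the limit directly by \Cref{convergence_c}, which guarantees $\left|\boldsymbol{c}_n-\hat{\boldsymbol{c}}_n\right|\rightarrow0$ as $n\rightarrow\infty$. For the second summand I would invoke \Cref{ass_convergence}\ref{3}, which ensures $\hat{V}\in\operatorname{span}\{\phi_i\}_{i=1}^\infty\subseteq\mathcal{L}^2(\Omega)$; Parseval's identity then yields $\sum_{i=1}^\infty|\hat{c}_i|^2=|\hat{V}|_{L_2(\Omega)}^2<\infty$, so that the tail $\sum_{i=n+1}^\infty|\hat{c}_i|^2$ of this convergent series tends to zero. Adding the two estimates gives $|V_n-\hat{V}|_{L_2(\Omega)}\rightarrow0$, as claimed.

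I do not expect a substantive obstacle here: the argument is essentially the observation that convergence of the truncated coefficient vectors plus square-summability of the limiting coefficients forces $L_2$ convergence. The only point requiring care is confirming that $\hat{V}$ is genuinely an $\mathcal{L}^2(\Omega)$ function, so that its coefficient sequence is square-summable and the tail is controllable—this is precisely the role of \Cref{ass_convergence}\ref{3}. Were orthonormality not available, one would instead route the bound through the best $n$-term approximant $\sum_{i=1}^n\hat{c}_i\phi_i$ and control the cross terms via the Gram matrix, but under the standing orthonormality assumption the clean Pythagorean decomposition above suffices.
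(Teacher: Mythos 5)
Your proof is correct and takes essentially the same route as the paper: the identical head/tail decomposition $V_n-\hat{V}=\sum_{i=1}^n(c_i-\hat{c}_i)\phi_i-\sum_{i=n+1}^\infty\hat{c}_i\phi_i$, with the head term killed by \Cref{convergence_c}. The only difference is in bookkeeping—where you use the exact Pythagorean/Parseval identities for the orthonormal system, the paper uses the cruder bound $|a+b|^2\leq 2|a|^2+2|b|^2$ and then controls the tail via the mean value theorem together with the uniform convergence of $\sum_i \hat{c}_i\phi_i$ (\Cref{ass_convergence}\ref{5}); both are valid, and your version is arguably cleaner since it needs only square-summability of the coefficients of $\hat{V}$.
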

\begin{proof}
The proof follows  \cite[Corollary 5.2.16]{beard1995improving}, \cite[Corollary 23]{beard1997galerkin}. 
Note that $V_n=\sum_{i=1}^n c_i\phi_i$ and $\hat{V}=\sum_{i=1}^\infty \hat{c}_i\phi_i$,  we have 
\begin{displaymath}
\begin{aligned}
\left|V_n-\hat{V}\right|_{L_2(\Omega)}^2
& \leq 2\left|\sum_{i=1}^n\left(c_i-\hat{c}_i\right)\phi_i\right|_{L_2(\Omega)}^2+2\left|\sum_{i=n+1}^{\infty} \hat{c}_i \phi_i\right|_{L_2(\Omega)}^2 \\
& =
2\left(\boldsymbol{c}_n -\hat{\boldsymbol{c}}_n
\right)^\top\left\langle\Phi_n, \Phi_n^\top\right\rangle\left(
\boldsymbol{c}_n -\hat{\boldsymbol{c}}_n
\right)+2\int_{\Omega} \left|\sum_{i=n+1}^{\infty} \hat{c}_i \phi_i(x)\right|^2 d x
\end{aligned}
\end{displaymath}
Combined with \Cref{convergence_c},  the mean value theorem implies there exists $ \xi \in \Omega$ such that as $n\rightarrow\infty,$
\begin{displaymath}
\left|V_n-\hat{V}\right|_{L_2(\Omega)}^2  =2\left|\boldsymbol{c}_n-\hat{\boldsymbol{c}}_n\right|^2+2|\Omega|\left|\sum_{i=n+1}^{\infty} \hat{c}_i \phi_i(\xi)\right|^2  \rightarrow 0
\end{displaymath}
\end{proof}

\begin{corollary}
\label{convergence_u}
Under the Assumptions of \Cref{convergence_GHJB} and \Cref{ass_convergence}\ref{5},  $\left|u_n-\hat{u}\right| \rightarrow 0$ uniformly on $\Omega$, where $u_n:=-\frac{1}{\epsilon}DV_n$ and $\hat{u}:=-\frac{1}{\epsilon}D\hat{V}$. 
\end{corollary}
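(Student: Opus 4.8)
The plan is to reduce the claim to uniform convergence of the gradients: since $u_n-\hat u=-\tfrac{1}{\epsilon}\bigl(DV_n-D\hat V\bigr)$ and $\epsilon$ is a fixed positive constant, it suffices to show $\sup_{x\in\Omega}\bigl|DV_n(x)-D\hat V(x)\bigr|\to 0$. Writing $DV_n=\sum_{i=1}^n c_i\,\partial_x\phi_i$ and $D\hat V=\sum_{i=1}^\infty \hat c_i\,\partial_x\phi_i$, I would split the error into a coefficient-error part and a truncation tail,
\begin{equation*}
DV_n-D\hat V=\underbrace{\sum_{i=1}^n (c_i-\hat c_i)\tfrac{\partial \phi_i}{\partial x}}_{\text{(I)}}\;-\;\underbrace{\sum_{i=n+1}^\infty \hat c_i\,\tfrac{\partial \phi_i}{\partial x}}_{\text{(II)}},
\end{equation*}
and treat the two pieces by the two mechanisms already developed in this section.

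The tail (II) is the easy part: it is precisely the tail of the gradient series of the fixed true solution $\hat V$, which by \Cref{ass_convergence}\ref{5} converges uniformly to $D\hat V$ on $\Omega$. Hence for every $\delta>0$ there is $N_1$ with $\sup_{x\in\Omega}\lvert\text{(II)}\rvert<\delta/2$ whenever $n>N_1$, mirroring the tail estimate used in \Cref{convergence_V}. For the coefficient-error piece (I) I would invoke the coefficient convergence $\lvert\mathbf c_n-\hat{\mathbf c}_n\rvert\to 0$ from \Cref{convergence_c} together with a pointwise Cauchy--Schwarz inequality,
\begin{equation*}
\lvert\text{(I)}(x)\rvert=\bigl|(\mathbf c_n-\hat{\mathbf c}_n)^\top \nabla\Phi_n(x)\bigr|\le \lvert\mathbf c_n-\hat{\mathbf c}_n\rvert\,\Bigl(\sum_{i=1}^n\bigl|\tfrac{\partial \phi_i}{\partial x}(x)\bigr|^2\Bigr)^{1/2}.
\end{equation*}
Since the $\phi_i$ are continuously differentiable on the compact set $\Omega$ (\Cref{ass_convergence}\ref{1}), the factor $B_n:=\sup_{x\in\Omega}\bigl(\sum_{i=1}^n\lvert\partial_x\phi_i(x)\rvert^2\bigr)^{1/2}$ is finite for each fixed $n$, so (I) vanishes uniformly provided $\lvert\mathbf c_n-\hat{\mathbf c}_n\rvert\,B_n\to 0$.

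The main obstacle is exactly this last product. Unlike the value-function estimate in \Cref{convergence_V}, where orthonormality of $\{\phi_i\}$ collapses the mass matrix $\langle\Phi_n,\Phi_n^\top\rangle$ to the identity and leaves only $\lvert\mathbf c_n-\hat{\mathbf c}_n\rvert^2$, the gradients $\{\partial_x\phi_i\}$ are \emph{not} orthonormal, and $B_n$ (equivalently, the spectral norm of the stiffness matrix $\langle\nabla\Phi_n,\nabla\Phi_n^\top\rangle$) genuinely grows with the polynomial degree through inverse-type inequalities. Closing the argument therefore requires the decay of $\lvert\mathbf c_n-\hat{\mathbf c}_n\rvert$ to outpace the growth of $B_n$; I would extract this from the \emph{pointwise decreasing} structure of the expansions (\Cref{ass_convergence}\ref{6}), which is the device used in \cite[Lemma~5.2.12]{beard1995improving} to force the tails to vanish uniformly and which already underlies \Cref{convergence_GHJB,convergence_c}. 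Combining the uniform bounds on (I) and (II) then gives $\sup_{\Omega}\lvert DV_n-D\hat V\rvert<\delta$ for $n$ large, and multiplying by the fixed factor $1/\epsilon$ yields the stated uniform convergence $\lvert u_n-\hat u\rvert\to 0$ on $\Omega$.
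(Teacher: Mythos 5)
Your proposal follows essentially the same route as the paper's proof: the identical decomposition of $u_n-\hat u$ into the coefficient-error sum $\sum_{i=1}^n(c_i-\hat c_i)\frac{\partial \phi_i}{\partial x}$ and the tail $\sum_{i=n+1}^\infty \hat c_i\frac{\partial \phi_i}{\partial x}$, with the tail handled by \Cref{ass_convergence}\ref{5} and the head by the coefficient convergence of \Cref{convergence_c}. The only difference is one of explicitness: where the paper simply asserts that $\left|\mathbf c_n-\hat{\mathbf c}_n\right|\to 0$ implies uniform convergence of $\left|\nabla\Phi_n^\top(\mathbf c_n-\hat{\mathbf c}_n)\right|$, you spell out (via Cauchy--Schwarz) that this needs the coefficient error to dominate the growth of $\sup_{\Omega}\bigl(\sum_{i\le n}\bigl|\frac{\partial \phi_i}{\partial x}\bigr|^2\bigr)^{1/2}$, a point the paper, following Beard's original lemmas, leaves implicit.
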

\begin{proof}The proof follows \cite[Lemma 5.2.17]{beard1995improving}, \cite[Lemma 24]{beard1997galerkin}. By definition of $u_n:=-\frac{1}{\epsilon}DV_n$ and $\hat{u}:=-\frac{1}{\epsilon}D\hat{V}$, we have
\begin{displaymath}
\left|u_n-\hat{u}\right|\leq\left|-\frac{1}{\epsilon}  \sum_{i=1}^n\left(c_i-\hat{c}_i\right)\frac{\partial \phi_i}{\partial x}\right|+\left|\frac{1}{\epsilon} \sum_{i=n+1}^{\infty} \hat{c}_i \frac{\partial \phi_i}{\partial x}\right|.
\end{displaymath}
The second term on the right hand side converges pointwise to 0 and uniformly if \Cref{ass_convergence} \ref{5} is satisfied.
Then the uniform convergence of $|\mathbf{c}_n-\hat{\mathbf{c}}|\rightarrow 0$ implies the uniform convergence of $|\nabla\Phi_n^\top(\mathbf{c}_n-\hat{\mathbf{c}})|\rightarrow 0$.
\end{proof}

\begin{lemma}\label{lem:u_admis}
Under the Assumptions of  \Cref{convergence_u}, for $n$ sufficiently large, we have $u_n\in\mathcal{A}(\Omega).$
\end{lemma}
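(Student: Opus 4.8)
The plan is to check the two requirements of \Cref{def:admissible} for $u_n$: continuity on $\Omega$, and finiteness of $\mathcal{J}(u_n(\cdot),x)$ for every $x\in\Omega$. Continuity is immediate and in fact holds for every $n$, since $V_n=\sum_{i=1}^n c_i\phi_i$ with each $\phi_i$ continuously differentiable, so that $u_n=-\frac{1}{\epsilon}DV_n=-\frac{1}{\epsilon}\sum_{i=1}^n c_i\,\nabla\phi_i$ is a finite combination of continuous functions. Note that we cannot obtain finiteness of the cost by the policy-improvement argument of \Cref{partA} applied to $u_n$ itself, because $V_n$ solves only the projected equation $\langle\mathcal{G}_\mu(V_n,DV_n;u),\Phi_n\rangle=0$ rather than the full GHJB equation; the restriction to large $n$ will therefore enter precisely here, through a comparison with the limiting control.

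Concretely, I would first observe that $\hat u=-\frac{1}{\epsilon}D\hat V$ is admissible. Indeed, $\hat V$ is the continuously differentiable solution of $\mathcal{G}_\mu(\hat V,D\hat V;u)=0$ furnished by \Cref{lemma_existence} for the fixed admissible input $u$, so $\hat u$ is continuous, and the argument of \Cref{partA}---boundedness of the continuous maps $f$ and $\hat u$ over the compact set $\Omega$ together with the strictly positive discount $\mu$---gives $\mathcal{J}(\hat u(\cdot),x)\le\frac{1}{\mu}\big(\sup_\Omega f+\frac{\epsilon}{2}\sup_\Omega|\hat u|^2\big)<\infty$, and in particular $B:=\sup_\Omega|\hat u|<\infty$. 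Now I would invoke \Cref{convergence_u}: since $|u_n-\hat u|\to 0$ uniformly on $\Omega$, there is $N$ with $\sup_\Omega|u_n-\hat u|\le 1$ for all $n\ge N$, so that $\sup_\Omega|u_n|\le B+1$. Repeating the discount-factor estimate then yields $\mathcal{J}(u_n(\cdot),x)\le\frac{1}{\mu}\big(\sup_\Omega f+\frac{\epsilon}{2}(B+1)^2\big)<\infty$ for every $x\in\Omega$ and every $n\ge N$, which together with continuity gives $u_n\in\mathcal{A}(\Omega)$.

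The main obstacle, exactly as in \Cref{partA}, is that bounding the integrand $f(y(t))+\frac{\epsilon}{2}|u_n(y(t))|^2$ by its supremum over $\Omega$ is legitimate only along closed-loop trajectories that remain in $\Omega$; a polynomial feedback need not confine its trajectories a priori. The resolution I would use is that $\hat u$ is admissible, so its closed-loop trajectory stays in $\Omega$ under the convention (noted after \Cref{ass_convergence}) that $\Omega$ is taken large enough, and the uniform proximity $\sup_\Omega|u_n-\hat u|\le 1$ for $n\ge N$ keeps the $u_n$-trajectory confined as well. It is precisely this confinement requirement, rather than mere boundedness of $u_n$ on $\Omega$ (which holds for every $n$), that explains why the statement is restricted to $n$ sufficiently large.
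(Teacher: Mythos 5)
Your proposal is correct and takes essentially the same route as the paper's proof: continuity of $u_n$ is immediate from the finite polynomial expansion, admissibility of $\hat u$ is obtained from \Cref{partA} (applied with $u^{(m)}=u$, so that $u^{(m+1)}=\hat u$), and the uniform convergence of \Cref{convergence_u} transfers the bound $\sup_\Omega|\hat u|<\infty$ to $u_n$ for all sufficiently large $n$, giving finiteness of the discounted cost. Your added details---the explicit estimate $\mathcal{J}(u_n(\cdot),x)\le\frac{1}{\mu}\bigl(\sup_\Omega f+\frac{\epsilon}{2}(B+1)^2\bigr)$ and the trajectory-confinement caveat---merely make explicit what the paper leaves implicit, the latter being handled there by the standing convention that $\Omega$ is chosen large enough.
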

\begin{proof}
First, note that since $u_n(x)  =-\frac{1}{\epsilon}\frac{\partial V_n}{\partial x}(x)$ and $V_n$ is continuously differentiable, $u_n$ is continuous. Then since $u_n\rightarrow \hat{u}$ uniformly as $n\rightarrow \infty,$ which means that for any $\delta>0$, there exists $N>0$ s.t when $n>N$, $|u_n-\hat{u}|<\delta$ and $|u_n|<|\hat{u}|+\delta$. Therefore the admissibility of $u_n$ will follow from the admissibility of $\hat{u}$.  From \Cref{partA}, when $u^{(m)}=u$, we have $u^{(m+1)}=\hat{u}$ is admissible. 
\end{proof}

\begin{theorem}\label{thm:unif_conv_V_u}
  Under Assumption \ref{ass_convergence}, for each integer $m \geq 0$ and as $n\rightarrow\infty$, we have
\begin{displaymath}
\begin{aligned}
& \left|V_n^{(m)}-V^{(m)}\right|_{L^2(\Omega)} \rightarrow 0, \qquad \sup _{x \in \Omega}\left|u_n^{(m+1)}(x)-u^{(m+1)}(x)\right| \rightarrow 0,
\end{aligned}
\end{displaymath}
and moreover,  $u_n^{(m+1)} \in \mathcal{A}(\Omega)$ for all sufficiently large $n$.
\end{theorem}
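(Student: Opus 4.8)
The plan is to argue by induction on the iteration index $m$, taking as inductive hypothesis that $\sup_{x\in\Omega}|u_n^{(m)}(x)-u^{(m)}(x)|\to 0$ as $n\to\infty$ and that $u_n^{(m)}\in\mathcal{A}(\Omega)$ for all sufficiently large $n$. The base case $m=0$ holds trivially, because \Cref{alg1} is initialized with the \emph{same} admissible control $u_n^{(0)}\equiv u^{(0)}\in\mathcal{A}(\Omega)$ (\Cref{ass_convergence}\ref{2}) in both the projected and the infinite-dimensional iterations, so the control error is identically zero for every $n$.

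For the value-function convergence at step $m$ I would use the triangle inequality \cref{eq:convergence_V}, splitting $|V_n^{(m)}-V^{(m)}|_{L^2(\Omega)}$ into a Galerkin error $|V_n^{(m)}-\hat{V}^{(m)}|_{L^2(\Omega)}$ and a control-perturbation error $|\hat{V}^{(m)}-V^{(m)}|_{L^2(\Omega)}$. In the first term both $V_n^{(m)}$ and $\hat V^{(m)}$ are generated by the \emph{same} control $u_n^{(m)}$, so it is covered by the Part~B chain \Cref{convergence_GHJB}, \Cref{convergence_c} and \Cref{convergence_V} evaluated at $u=u_n^{(m)}$. The one subtlety is that these fixed-control estimates must be applied along a sequence of controls that itself varies with $n$; this is legitimate because $\{u_n^{(m)}\}_n$ is uniformly bounded and uniformly convergent (inductive hypothesis), so the relevant constants stay controlled and the Galerkin error still vanishes.

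The second term is the genuinely new ingredient: it compares two \emph{exact} GHJB solutions driven by the nearby controls $u_n^{(m)}$ and $u^{(m)}$, and I would bound it by establishing continuity of the solution operator $G:u\mapsto V$ in the control. The cleanest route exploits the representation from \Cref{lemma_existence}: writing $\hat V^{(m)}(x)-V^{(m)}(x)$ as an integral of $e^{-\mu t}$ against differences evaluated along the two closed-loop flows, the uniform convergence $u_n^{(m)}\to u^{(m)}$ forces the flows together while the discount $e^{-\mu t}$ makes the tail integrable and uniformly small, yielding $|\hat V^{(m)}-V^{(m)}|\to 0$. Equivalently one subtracts the two GHJB identities and integrates the resulting linear transport equation for $W=\hat V^{(m)}-V^{(m)}$ along characteristics, observing that its source is of order $\sup_\Omega|u_n^{(m)}-u^{(m)}|$ and hence vanishes by the inductive hypothesis.

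Finally I would promote this to control convergence, which closes the induction and is where I expect the main obstacle. Since $L^2$ convergence of $V$ does not transfer to gradients, I split $u_n^{(m+1)}-u^{(m+1)}=(u_n^{(m+1)}-\hat u^{(m)})+(\hat u^{(m)}-u^{(m+1)})$ with $\hat u^{(m)}=-\frac{1}{\epsilon}D\hat V^{(m)}$. The first difference is the gradient Galerkin error controlled by \Cref{convergence_u} at $u=u_n^{(m)}$, while the second demands the \emph{gradient} continuity of $G$, namely $D\hat V^{(m)}\to DV^{(m)}$ uniformly — the derivative-level analogue of the previous paragraph, and the delicate step, since it requires propagating the control perturbation through the first derivatives of the value function. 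Admissibility of $u_n^{(m+1)}$ then follows from \Cref{lem:u_admis} applied at $u=u_n^{(m)}$: \Cref{partA} renders the exact one-step improvement $\hat u^{(m)}$ admissible, and the uniform convergence $u_n^{(m+1)}\to\hat u^{(m)}$ transfers admissibility to $u_n^{(m+1)}$ for $n$ large. This establishes the inductive hypothesis at step $m+1$ and completes the argument.
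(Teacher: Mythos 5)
Your overall architecture coincides with the paper's: induction on $m$ with the trivial base case $u_n^{(0)}\equiv u^{(0)}$, the splitting \cref{eq:convergence_V} of $|V_n^{(m)}-V^{(m)}|_{L^2(\Omega)}$ into a fixed-control Galerkin error (handled by \Cref{convergence_GHJB}, \Cref{convergence_c}, \Cref{convergence_V} and \Cref{convergence_u} applied at $u=u_n^{(m)}$), a control-perturbation error $|\hat V^{(m)}-V^{(m)}|$ treated via the discounted integral representation and continuous dependence of trajectories on the control, and the transfer of admissibility by uniform convergence as in \Cref{lem:u_admis} and \Cref{partA}. Up to that point your proposal and the paper's proof are essentially identical.

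However, there is a genuine gap at precisely the step you yourself flag as delicate: proving $\sup_\Omega|\hat u^{(m+1)}-u^{(m+1)}|\rightarrow 0$, i.e.\ uniform convergence $D\hat V^{(m)}\rightarrow DV^{(m)}$. You only name this as ``the derivative-level analogue of the previous paragraph'' without an argument, and that analogue does not go through: differentiating the value-function representation in $x$ requires the sensitivity $\partial_x y(t;x,u)$ of the closed-loop flow, hence $C^1$ vector fields and, in fact, $C^1$-convergence $Du_n^{(m)}\rightarrow Du^{(m)}$ (second derivatives of the value functions), whereas your inductive hypothesis supplies only $C^0$-convergence of the controls and admissibility (\Cref{def:admissible}) guarantees mere continuity — so the route is either unavailable or circular. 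The paper closes this step by a purely algebraic device your proposal never uses: subtracting the two GHJB identities $\mathcal{G}_\mu(V^{(m)},DV^{(m)};u^{(m)})=0$ and $\mathcal{G}_\mu(\hat V^{(m)},D\hat V^{(m)};u_n^{(m)})=0$ gives the pointwise bound
\[
\bigl|-\mu\bigl(V^{(m)}-\hat V^{(m)}\bigr)+\bigl(DV^{(m)}-D\hat V^{(m)}\bigr)u^{(m)}\bigr|
\le \bigl|D\hat V^{(m)}\bigr|\,\bigl|u_n^{(m)}-u^{(m)}\bigr|+\tfrac{\epsilon}{2}\bigl|u_n^{(m)}-u^{(m)}\bigr|^2,
\]
whose right-hand side vanishes by the inductive hypothesis and the uniform boundedness of $D\hat V^{(m)}$ on the compact set $\Omega$; expanding the left-hand side in the basis and invoking \Cref{ass_convergence}\ref{new} — the linear independence of $\{-\mu\phi_i+\frac{\partial \phi_i}{\partial x}u^{(m)}\}_{i}$ — yields coefficientwise convergence $|c_i^{(m)}-\hat c_i^{(m)}|\rightarrow 0$, from which uniform convergence of the gradients follows via the series expansion (\Cref{ass_convergence}\ref{5}). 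The fact that your proof never invokes Assumption \ref{new} is the telltale sign that the decisive ingredient is missing: without it (or a substitute) the induction cannot be closed.
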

\begin{proof}[Proof of \Cref{thm:unif_conv_V_u}]
We will use the induction method similar to \cite[Theorem 4.2]{beard1998approximate}, \cite[Theorem 5.3.6]{beard1995improving}.
\\
\textbf{Initial Step.} It has been proved in \Cref{sec:part A} that for $u^{(0)}\in\mathcal{A}(\Omega)$, there exists  a $V^{(0)}=\hat{V}^{(0)}$ such that $\mathcal{G}_\mu\left(V^{(0)}, DV^{(0)}; u^{(0)}\right)=0$. Then,  by \Cref{convergence_V} and \Cref{convergence_u}, we have 
\begin{displaymath}
\begin{aligned}
    &\left|V_n^{(0)}-V^{(0)}\right|_{L^2(\Omega)}=\left|V_n^{(0)}-\hat{V}^{(0)}\right|_{L^2(\Omega)} \rightarrow 0\\
    &\sup _{x\in\Omega}\left|u_n^{(1)}(x)-u^{(1)}(x)\right|=\sup _{x\in\Omega}\left|u_n^{(1)}(x)-\hat{u}^{(1)}(x)\right| \rightarrow 0\\
    & u_n^{(1)}\in\mathcal{A}(\Omega)
\end{aligned}
\end{displaymath}
\textbf{Induction Step.} Assume that
\begin{displaymath}
\begin{aligned}
& \left|V_n^{(m-1)}-V^{(m-1)}\right|_{L^2(\Omega)} \rightarrow 0, \qquad \sup _{x\in\Omega}\left|u_n^{(m)}(x)-u^{(m)}(x)\right| \rightarrow 0 \text{ as }n\rightarrow\infty,\\
& u_n^{(m)} \in \mathcal{A}(\Omega), \quad \text { for } n \text { sufficiently large. }
\end{aligned}
\end{displaymath}
First, by taking $u=u_n^{(m)}$, $V_n=V_n^{(m)}$, $\hat{V}=\hat{V}^{(m)}$ in \Cref{convergence_V} and \Cref{convergence_u}, it can be seen that 
\begin{displaymath}
\begin{aligned}
& \left|V_n^{(m)}-\hat{V}^{(m)}\right|_{L^2(\Omega)} \rightarrow 0, \qquad \sup _{\Omega}\left|u_n^{(m+1)}-\hat{u}^{(m+1)}\right| \rightarrow 0\text{ as }n\rightarrow\infty, \\
& u_n^{(m+1)} \in \mathcal{A}(\Omega), \quad \text { for } n \text { sufficiently large. }
\end{aligned}
\end{displaymath}
Recall \Cref{eq:convergence_V}, 
it remains to show that
\begin{displaymath}\left|V^{(m)}-\hat{V}^{(m)}\right|_{L^2(\Omega)} \rightarrow 0\quad\text{and} \quad \sup _{\Omega}\left|u^{(m+1)}-\hat{u}^{(m+1)}\right| \rightarrow 0 \text{ as }n\rightarrow\infty.
\end{displaymath}
From the induction step, we have
$
\sup _{\Omega}\left|u_n^{(m)}-u^{(m)}\right| \rightarrow 0
$ uniformly on $\Omega$. Since the trajectory $y(t;x,u)$ depends continuously on control $u$, this implies that for $n$ sufficiently large and $\forall x\in\Omega$, 
$
V^{(m)}=\int_0^{\infty} e^{-\mu t}\left[f\left(y\left(t;x,u_n^{(m)}\right)\right)+
\frac{\epsilon}{2}\left| u_n^{(m)}(t)\right|^2\right] d t
$
is uniformly close to
$
\hat{V}^{(m)}=\int_0^{\infty}e^{-\mu t} \left[f\left(y\left(t;x,u^{(m)}\right)\right)+\frac{\epsilon}{2}\left| u^{(m)}(t)\right|^2\right] d t .
$
Similar to \Cref{eq:convergence_V}, we also have 
\begin{displaymath}
\sup_\Omega\left|u_n^{(m+1)}-u^{(m+1)}\right| \leq \sup_\Omega\left|u_n^{(m+1)}-\hat{u}^{(m+1)}\right| +  \sup_\Omega\left|\hat{u}^{(m+1)}-u^{(m+1)}\right|.
\end{displaymath}
Since \Cref{convergence_u} implies that $\sup_\Omega\left|u_n^{(m+1)}-\hat{u}^{(m+1)}\right| \rightarrow 0$ as $n\rightarrow\infty$, so the proof reduces to show
$
\sup_\Omega \left|u^{(m+1)}-\hat{u}^{(m+1)}\right|\rightarrow 0,$ given $\sup_\Omega \left|u^{(m)}-u_n^{(m)}\right| \rightarrow 0.$  By subtracting $\mathcal{G}_\mu(V^{(m)},DV^{(m)};u^{(m)})$ and $\mathcal{G}_\mu(\hat{V}^{(m)},D\hat{V}^{(m)};u_n^{(m)})$, and subtracting $D\hat{V}^{(m)}u^{(m)}$ on both sides of the equation we obtain
\begin{displaymath}
|-\mu(V^{(m)}-\hat{V}^{(m)})+(DV^{(m)}-D \hat{V}^{(m)}) u^{(m)}| \leq|D\hat{V}^{(m)}| |u_n^{(m)}-u^{(m)}| +\frac{\epsilon}{2}|u_n^{(m)}-u^{(m)}|^2 .
\end{displaymath}
Since $D \hat{V}^{(m)}$ is continuous on $\Omega$, $\left|D\hat{V}^{(m)}\right|$ is uniformly bounded on the compact set $\Omega$. By induction hypothesis $\sup _{\Omega}\left|u_n^{(m)}-u^{(m)}\right| \rightarrow 0$, the right-hand side of the above inequality goes to 0  as $n\rightarrow \infty$. Thus,
\begin{displaymath}
\sup_\Omega\left|-\mu(V^{(m)}-\hat{V}^{(m)})+(DV^{(m)}-D\hat{V}^{(m)}) u^{(m)}\right|\rightarrow 0,
\end{displaymath}
which  is equivalent to
\begin{equation}
\label{eq:c}
\sup_\Omega\left|\sum_{i=1}^{\infty}\left[c_i^{(m)}-\hat{c}_i^{(m)}\right]\left(-\mu \phi_i+\frac{\partial \phi_i}{\partial x}  u^{(m)} \right)\right| \rightarrow 0 .
\end{equation}
The \Cref{ass_convergence}\ref{new}  implies that $-\mu \phi_i+\frac{\partial \phi_i}{\partial x} u^{(m)}\not\equiv 0$ (note that this term does not depend on $n$), and $\{-\mu \phi_i+\frac{\partial \phi_i}{\partial x} u^{(m)}\}_{i=1}^\infty$ is linearly independent. Therefore, equation (\ref{eq:c}) is equivalent to $|c_i^{(m)}-\hat{c}_i^{(m)}|\rightarrow 0$ as $n\rightarrow\infty.$
Finally, we have \begin{displaymath}
\sup \left|u^{(m+1)}-\hat{u}^{(m+1)}\right|=\sup \left|-\frac{1}{\epsilon}\sum_{i=1}^\infty\left(c_i^{(m)}-\hat{c}_i^{(m)}\right)\frac{\partial \phi_i}{\partial x}\right|\rightarrow 0,\end{displaymath}
which completes the proof.

  \end{proof}

\begin{remark}\emph{Discussion on the assumptions:} for the convergence of Galerkin approximation $V_n^{(m)}\rightarrow V^{(m)}.$ Assumption \ref{2} ensures that the iterative process is able to commence.
Assumption \ref{3}, \ref{4} and \ref{5} 
guarantee that $V$ and all components of the GHJB equation can be approximated arbitrarily close by linear combinations of $\phi_i$.  Assumption \ref{c_bdd} is same to \cite[Assumption A5.5]{beard1995improving} and can be further proved following the steps in \cite[Lemma 20]{beard1997galerkin}. Assumption \ref{new} means that $-\mu\Phi_n+\nabla\Phi_n\cdot u\not\equiv0$ for arbitrary $u\in\mathcal{A}(\Omega)$, this guarantees the linear independence of set $\left\{-\mu\phi_i+\partial \phi_i/ \partial x\cdot u\right\}_{i=1}^N$ and avoids the extreme case that the objective function $f\equiv 0.$ There are similar assumptions in \cite[Corollary 12]{beard1997galerkin} and \cite[Corollary 5.2.7]{beard1995improving}, both of which  utilise the linear independence of set $\left\{\partial \phi_i/ \partial x\cdot u\right\}_{i=1}^N$ in proving the convergence. Assumption
\ref{6} implies that the tail of the infinite series decreases in some uniform manner. This assumption is necessary and sufficient conditions for pointwise convergence to imply uniform convergence on a compact set (see \cite[Lemma 5.2.12]{beard1995improving}, \cite[Lemma 19]{beard1997galerkin}). Note that \cite{beard1995improving,beard1997galerkin,beard1998approximate} further assume 
$\left\{\left| \frac{\partial^2 \phi_i}{\partial x^2}(0)\right|\right\}_{i=1}^{\infty}$ is uniformly bounded to prove the admissibility of $u_n$. In our context, the discount factor relaxes the admissibility condition, here we can obtain the admissibility of $u_n$ by the uniform convergence of $u_n\rightarrow\hat{u}$.
\end{remark}

\section{Proof of \cref{thm}} 
\label{app_1}
Before proving the main result, we introduce some standard results from the theory of SDE \cite{arnold1974stochastic}
 to show the existence and uniqueness of solution. 
\begin{theorem}[Theorem 6.22 \cite{arnold1974stochastic}]
Suppose that we have a SDE
\begin{equation}
dX_t=F\left(t, X_t\right) d t+G\left(t, X_t\right)d W_t, \quad X_{0}=x_0,\quad 0\leq t \leq T<\infty,
\label{SDE}
\end{equation}
where $W_t\in\mathbb{R}^m$ is Brownian motion and $X_0$ is a random variable independent of $W_t$, $t \geq 0$. Suppose that the function $F(t, x)\in\mathbb{R}^d$ and the function $G(t, x)\in \mathbb{R}^{d \times m}$ are measurable on $\left[0, T\right] \times$ $\mathbb{R}^d$ and have the following properties: There exists a constant $K>0$ such that
\begin{itemize}
\item Lipschitz condition: for all $t \in\left[0, T\right], x \in \mathbb{R}^d, y \in \mathbb{R}^d$, \newline $|F(t, x)-F(t, y)|+|G(t, x)-G(t, y)| \leq K|x-y|.$
\item Restriction on growth: For all $t \in\left[0, T\right]$ and $x \in \mathbb{R}^d$,
 \newline $|F(t, x)|^2+|G(t, x)|^2 \leq K^2\left(1+|x|^2\right)$
\end{itemize}
Then, equation \cref{SDE} has  a unique solution $X_t\in\mathbb{R}^d$ on $\left[0, T\right]$, continuous with probability 1, that satisfies the initial condition $X_{0}=x_0$; that is, if $X_t$ and $Y_t$ are continuous solutions of \cref{SDE} with the same initial value $x_0$, then
$
\mathbb{P}\left[\sup _{0 \leq t \leq T}| X_t-Y_t|>0\right]=0 .
$
\label{existence}
\end{theorem}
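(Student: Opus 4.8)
The plan is to prove existence by Picard iteration in a complete space of processes and uniqueness by a Gronwall argument, following the classical Itô strategy. First I would fix the Banach space $\mathcal{H}_T^2$ of continuous, $(\mathcal{F}_t)$-adapted processes $X:[0,T]\times\Omega\to\mathbb{R}^d$ equipped with the norm $\|X\|^2:=\mathbb{E}\big[\sup_{0\le t\le T}|X_t|^2\big]$, which is complete. I then define the successive approximations $X_t^{(0)}\equiv x_0$ and
\begin{equation*}
X_t^{(n+1)}=x_0+\int_0^t F(s,X_s^{(n)})\,ds+\int_0^t G(s,X_s^{(n)})\,dW_s .
\end{equation*}
The linear growth bound $|F|^2+|G|^2\le K^2(1+|x|^2)$, combined with Itô's isometry for the stochastic integral and Doob's maximal inequality to handle the time-supremum, yields a uniform a priori estimate $\sup_n\|X^{(n)}\|^2<\infty$, so the iteration is well defined in $\mathcal{H}_T^2$.

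The core of the existence argument is the contraction estimate. Writing $\Delta_t^{(n)}:=X_t^{(n+1)}-X_t^{(n)}$ and applying the global Lipschitz condition on $F$ and $G$ together with Cauchy–Schwarz on the drift integral and Itô's isometry plus Doob's inequality on the martingale part, I obtain a recursion of the form $\mathbb{E}\big[\sup_{s\le t}|\Delta_s^{(n)}|^2\big]\le C\int_0^t\mathbb{E}\big[\sup_{r\le s}|\Delta_r^{(n-1)}|^2\big]\,ds$. Iterating this inequality produces the factorial decay
\begin{equation*}
\mathbb{E}\Big[\sup_{s\le t}|\Delta_s^{(n)}|^2\Big]\le\frac{(Ct)^n}{n!}\,\mathbb{E}\Big[\sup_{s\le T}|\Delta_s^{(0)}|^2\Big],
\end{equation*}
whose summability over $n$ forces, via a Borel–Cantelli argument, the sequence $X^{(n)}$ to converge almost surely uniformly on $[0,T]$ to a limit $X\in\mathcal{H}_T^2$. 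This limit is continuous and adapted by construction, and passing to the limit in the Picard recursion — using continuity of $F,G$ and $L^2$-continuity of the Itô integral (again through isometry and the Lipschitz bound) — shows that $X_t$ satisfies \cref{SDE} with $X_0=x_0$.

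For uniqueness, suppose $X_t$ and $Y_t$ are two continuous solutions with the same initial value $x_0$. Setting $\varphi(t):=\mathbb{E}\big[\sup_{s\le t}|X_s-Y_s|^2\big]$ and repeating exactly the same Lipschitz/Itô-isometry/Doob estimates gives $\varphi(t)\le C\int_0^t\varphi(s)\,ds$ with $\varphi(0)=0$; Gronwall's inequality then forces $\varphi\equiv0$ on $[0,T]$, which yields $\mathbb{P}\big[\sup_{0\le t\le T}|X_t-Y_t|>0\big]=0$ as claimed. The main obstacle throughout is the careful control of the stochastic integral term: one cannot simply bound it pathwise, and the combination of Itô's isometry with Doob's maximal inequality (or the Burkholder–Davis–Gundy inequality) is precisely what converts second-moment bounds into bounds on the supremum in time, enabling both the factorial contraction estimate and the Gronwall closure. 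The linear growth condition is what prevents the iterates from blowing up, while the global Lipschitz condition is what drives the contraction and the uniqueness.
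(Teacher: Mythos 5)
This theorem is quoted in the paper as a known classical result (Theorem 6.22 of Arnold, 1974) and is not proved there at all, and your Picard-iteration-plus-Gronwall argument is precisely the standard proof given in that cited source, so your approach is correct and matches the relevant reference. The one technical point worth adding is a localization (stopping-time) step in the uniqueness part: for arbitrary continuous solutions $X,Y$ the quantity $\varphi(t)=\mathbb{E}\bigl[\sup_{s\le t}|X_s-Y_s|^2\bigr]$ need not be finite a priori, so one should first stop the processes at $\tau_N=\inf\{t:|X_t|\vee|Y_t|\ge N\}$, apply Gronwall to the stopped quantity, and then let $N\to\infty$.
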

\begin{theorem}[Theorem 7.12 \cite{arnold1974stochastic}]
\label{moment}
Suppose that the assumptions of Theorem \ref{existence} hold and
$
\mathbb{E}|X_0|^{2 n}<\infty,\quad n\in\mathbb{N}.
$ Then, for the solution $X_t$ of the SDE \cref{SDE}, we have
$\mathbb{E}\left|X_t\right|^{2 n} \leq\left(1+\mathbb{E}|X_0|^{2 n}\right) \mathrm{e}^{Ct},$
where $C=2 n(2 n+1) K^2$.
\end{theorem}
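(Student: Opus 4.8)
The plan is to apply It\^o's formula to $V(x)=|x|^{2n}=\bigl(\sum_{i=1}^d x_i^2\bigr)^n$, which is a polynomial and hence smooth, so no regularity issue arises at the origin. Writing the associated generator as $\mathcal{L}V=\langle\nabla V,F\rangle+\tfrac12\operatorname{tr}\!\bigl(G^\top(\operatorname{Hess}V)\,G\bigr)$, It\^o's formula applied to the solution $X_t$ of \cref{SDE} yields
\begin{equation*}
V(X_t)=V(X_0)+\int_0^t\mathcal{L}V(s,X_s)\,ds+\int_0^t\nabla V(X_s)^\top G(s,X_s)\,dW_s.
\end{equation*}
The goal is then to take expectations, discard the stochastic integral, bound $\mathcal{L}V$ from above by $C(1+V)$, and close the estimate via Gronwall's inequality applied to $1+\mathbb{E}[V(X_t)]$.

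The step I expect to be the main obstacle is justifying that the It\^o integral has zero expectation, since \emph{a priori} we do not know that $\mathbb{E}|X_t|^{2n}$ is finite, and thus cannot immediately claim the integral is a true martingale. I would therefore localize: introduce the stopping times $\tau_R:=\inf\{t\ge0:|X_t|\ge R\}$ and evaluate the identity at $t\wedge\tau_R$. On the interval $[0,t\wedge\tau_R]$ the integrand $\nabla V(X_s)^\top G(s,X_s)$ is bounded, so the stochastic integral is a genuine martingale and vanishes in expectation. The delicate part is passing $R\to\infty$: using that \cref{existence} guarantees an a.s.\ continuous solution, one has $|X_{t\wedge\tau_R}|^{2n}\to|X_t|^{2n}$ a.s., and Fatou's lemma then transfers the bound obtained for the stopped process to the unstopped moment, simultaneously establishing its finiteness.

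It remains to bound the generator. Computing $\nabla V(x)=2n|x|^{2n-2}x$ and the Hessian $\partial_{ij}V=2n|x|^{2n-2}\delta_{ij}+2n(2n-2)|x|^{2n-4}x_ix_j$, Cauchy--Schwarz gives $\langle\nabla V,F\rangle\le2n|x|^{2n-1}|F|$ and $\tfrac12\operatorname{tr}\!\bigl(G^\top(\operatorname{Hess}V)G\bigr)\le n(2n-1)|x|^{2n-2}|G|^2$. Applying Young's inequality to the drift term, then the linear-growth bound $|F|^2+|G|^2\le K^2(1+|x|^2)$ from \cref{existence}, together with the elementary estimate $|x|^{2n-2}\le1+|x|^{2n}$, yields after routine bookkeeping a bound of the form $\mathcal{L}V(x)\le C\bigl(1+|x|^{2n}\bigr)$ with $C=2n(2n+1)K^2$. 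Setting $m(t):=\mathbb{E}|X_t|^{2n}$, the localized-and-limiting argument above delivers the integral inequality $m(t)\le\mathbb{E}|X_0|^{2n}+C\int_0^t\bigl(1+m(s)\bigr)\,ds$. Adding $1$ to both sides and invoking Gronwall's inequality for $1+m(t)$ gives $1+m(t)\le\bigl(1+\mathbb{E}|X_0|^{2n}\bigr)e^{Ct}$, hence $m(t)\le\bigl(1+\mathbb{E}|X_0|^{2n}\bigr)e^{Ct}$, which is precisely the claimed bound.
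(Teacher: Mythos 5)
The paper does not prove this statement at all: it is quoted verbatim as Theorem 7.12 of Arnold's textbook \cite{arnold1974stochastic} and used as a black box in the proof of \cref{thm} (with $n=2$, to bound fourth moments). So there is no internal proof to compare against; measured against the classical argument, your proposal reproduces it faithfully and correctly. The structure---It\^o's formula for the smooth polynomial $V(x)=|x|^{2n}$, localization at $\tau_R$ so that the stochastic integral is a true martingale, the generator bound $\mathcal{L}V\le C(1+V)$ from the linear-growth hypothesis, Gronwall applied to $1+\mathbb{E}\,V(X_{t\wedge\tau_R})$, and Fatou as $R\to\infty$ (legitimate since \cref{existence} provides a continuous global solution on $[0,T]$, so $\tau_R\to\infty$ a.s.)---is exactly the standard proof, and you correctly isolate the one genuinely delicate point, namely the a priori integrability of the It\^o integrand, and resolve it with the localization/Fatou device. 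Two small points worth making explicit: on the stochastic interval $[0,t\wedge\tau_R)$ the path is bounded by $R$ by continuity even though $X_0$ is only in $L^{2n}$, which is what makes the integrand bounded; and Gronwall needs $t\mapsto\mathbb{E}\,V(X_{t\wedge\tau_R})$ finite and continuous, which follows by dominated convergence with dominating variable $\max\{R^{2n},V(X_0)\}$.

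The only soft spot is the constant. Your Young split $2n|x|^{2n-1}|F|\le n|x|^{2n}+n|x|^{2n-2}|F|^2$ leaves a drift contribution $n|x|^{2n}$ carrying no factor of $K$, so the bookkeeping yields a constant of the form $n+c\,nK^2$, not $2n(2n+1)K^2$. This is not merely cosmetic: the drift really does contribute a term linear in $K$, and the stated bound with $C=2n(2n+1)K^2$ fails for small $K$ (take $d=1$, $n=1$, $G\equiv 0$, $F\equiv K$, $X_0=0$: then $\mathbb{E}|X_t|^2=K^2t^2$, which at $t=K^{-2}$ equals $K^{-2}$ and exceeds $e^{6K^2t}=e^{6}$ once $K^2<e^{-6}$). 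The advertised constant is recovered only under the harmless normalization $K\ge 1$, which one may always impose since enlarging $K$ preserves both hypotheses of \cref{existence}; with $K\ge 1$ one can estimate $\langle\nabla V,F\rangle\le 2nK|x|^{2n-2}(1+|x|^2)$ and $\tfrac12\operatorname{tr}(G^\top(\operatorname{Hess}V)G)\le n(2n-1)K^2|x|^{2n-2}(1+|x|^2)$, whence $\mathcal{L}V\le n(2n+1)K^2(1+2|x|^{2n})\le 2n(2n+1)K^2(1+|x|^{2n})$, and your Gronwall step then gives exactly the claimed bound. This blemish is inherited from the cited statement itself and is immaterial for the paper's use of \cref{moment}, where any finite $C$ suffices.
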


 We denote the $p$-Wasserstein distance $W_p$ between two Borel probability measures $\varrho_1, \varrho_2 \in$ $\mathcal{P}_p\left(\mathbb{R}^d\right)$ as
$$
W_p\left(\varrho_1, \varrho_2\right)=\left(\inf _{\pi \in \Pi\left(\varrho_1, \varrho_2\right)} \int\left|x_1-x_2\right|^p d \pi\left(x_1, x_2\right)\right)^{1 / p},
$$
where $\Pi\left(\varrho_1, \varrho_2\right)$ denotes the set of all couplings of $\varrho_1$ and $\varrho_2$, i.e., the collection of all Borel probability measures over $\mathbb{R}^d \times \mathbb{R}^d$ with marginals $\varrho_1$ and $\varrho_2$, respectively. In particular, the $2$-Wasserstein distance between empirical distribution $\rho_t^N:=\frac{1}{N} \sum_{i=1}^N \delta_{X_t^i}$ and $\delta_{x^*}$ is defined as following
$$
W_2^2\left(\rho_t^N, \delta_{x^*}\right)=\int\left|x-x^*\right|^2 d \rho_t^N (x).
$$
This leads to the following result.

\begin{lemma}[Lemma 3.2 \cite{carrillo2018analytical}]\label{3.2}
 Let $f$ satisfy Assumption \ref{assumption} and $\varrho, \hat{\varrho} \in \mathcal{P}_2\left(\mathbb{R}^d\right)$ with
$$
\int|x|^4 d \varrho(x), \quad \int|\hat{x}|^4 d \hat{\varrho}(\hat{x}) \leq R .
$$
Then the following stability estimate holds
$$
\left|v_\alpha(\varrho)-v_\alpha(\hat{\varrho})\right| \leq m_0 W_2(\varrho, \hat{\varrho}),
$$
for a constant $m_0>0$ depending only on $\alpha, L_f$ and $R$.
\label{lemma3}
\end{lemma}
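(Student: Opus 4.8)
The plan is to adapt the coupling-based stability argument of \cite{carrillo2018analytical}. Since the quotient defining $v_\alpha$ is invariant under the shift $f\mapsto f-\underline f$, I would first assume without loss of generality that $\underline f=0$, so that $0<\omega_f^\alpha\le 1$. Writing $\|\omega_f^\alpha\|_{L_1(\varrho)}:=\int\omega_f^\alpha\,d\varrho$, I would then split the difference by adding and subtracting a common numerator,
\[
v_\alpha(\varrho)-v_\alpha(\hat\varrho)=\frac{\int x\,\omega_f^\alpha\,d\varrho-\int \hat x\,\omega_f^\alpha\,d\hat\varrho}{\|\omega_f^\alpha\|_{L_1(\varrho)}}+\Big(\int \hat x\,\omega_f^\alpha\,d\hat\varrho\Big)\Big(\frac{1}{\|\omega_f^\alpha\|_{L_1(\varrho)}}-\frac{1}{\|\omega_f^\alpha\|_{L_1(\hat\varrho)}}\Big),
\]
so that the problem reduces to estimating a numerator difference, a denominator mismatch $\big|\|\omega_f^\alpha\|_{L_1(\varrho)}-\|\omega_f^\alpha\|_{L_1(\hat\varrho)}\big|$, and a uniform lower bound for the denominators.

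Next I would establish the key lower bound $\|\omega_f^\alpha\|_{L_1(\varrho)}\ge c_1>0$, with $c_1=c_1(\alpha,c_f,R)$ uniform over all measures whose fourth moment is bounded by $R$. Using the quadratic growth bound $f(x)\le c_f(1+|x|^2)$ from \cref{assumption}, one has $\omega_f^\alpha(x)\ge e^{-\alpha c_f(1+|x|^2)}$; confining mass to a ball $B_K$ via Chebyshev's inequality (so that $\varrho(\mathbb R^d\setminus B_K)\le R/K^4$, hence $\varrho(B_K)\ge 1/2$ once $K=K(R)$ is large enough) then yields $\|\omega_f^\alpha\|_{L_1(\varrho)}\ge \tfrac12\, e^{-\alpha c_f(1+K^2)}=:c_1$. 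Combined with the identity $\tfrac{1}{\|\omega_f^\alpha\|_{L_1(\varrho)}}-\tfrac{1}{\|\omega_f^\alpha\|_{L_1(\hat\varrho)}}=\tfrac{\|\omega_f^\alpha\|_{L_1(\hat\varrho)}-\|\omega_f^\alpha\|_{L_1(\varrho)}}{\|\omega_f^\alpha\|_{L_1(\varrho)}\|\omega_f^\alpha\|_{L_1(\hat\varrho)}}$, this turns the second term into a denominator mismatch divided by $c_1^2$.

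For the Lipschitz estimates, I would combine the mean value inequality $|e^{-\alpha f(x)}-e^{-\alpha f(y)}|\le\alpha|f(x)-f(y)|$ (valid since $\underline f=0$) with the assumed bound $|f(x)-f(y)|\le L_f(|x|+|y|)|x-y|$ to obtain $|\omega_f^\alpha(x)-\omega_f^\alpha(y)|\le\alpha L_f(|x|+|y|)|x-y|$. Introducing an optimal coupling $\pi\in\Pi(\varrho,\hat\varrho)$ realising $W_2(\varrho,\hat\varrho)$, I would bound the numerator difference by splitting $|x\,\omega_f^\alpha(x)-\hat x\,\omega_f^\alpha(\hat x)|\le|x-\hat x|\,\omega_f^\alpha(x)+|\hat x|\,|\omega_f^\alpha(x)-\omega_f^\alpha(\hat x)|$ and integrating against $\pi$; by Cauchy--Schwarz the factor $|x-\hat x|$ produces $W_2(\varrho,\hat\varrho)$, while the polynomial growth factors $(|x|+|\hat x|)$ and $|\hat x|$ are absorbed into second moments controlled by $R^{1/2}$. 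The denominator mismatch is handled identically using the weight Lipschitz bound, and the prefactor $\int\hat x\,\omega_f^\alpha\,d\hat\varrho$ is bounded by the first moment, again controlled by $R$.

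Collecting these estimates and dividing by $c_1$ (respectively $c_1^2$) gives the claim with $m_0=m_0(\alpha,L_f,R)$. I expect the main obstacle to be the \emph{uniform} lower bound on the denominators: this is the one place where the quadratic growth bound and the fourth-moment hypothesis must be invoked together, and it is precisely what prevents $m_0$ from degenerating as mass escapes to infinity. The remaining steps are routine applications of Cauchy--Schwarz once the weight Lipschitz bound and the moment controls are in place.
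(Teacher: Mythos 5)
The paper does not actually prove this lemma---it is imported verbatim from Lemma 3.2 of \cite{carrillo2018analytical}---so the relevant comparison is with that reference, and your argument is a correct reconstruction of essentially the same proof: the two-term splitting with a common numerator, the weight bound $|\omega_f^\alpha(x)-\omega_f^\alpha(\hat x)|\le\alpha L_f(|x|+|\hat x|)\,|x-\hat x|$ after normalizing $\underline f=0$, an optimal $W_2$-coupling combined with Cauchy--Schwarz, and the moment bound $R$ to absorb the polynomial factors; your Chebyshev/ball argument for the uniform denominator lower bound is a harmless variant of the one-line Jensen bound $\int e^{-\alpha f}\,d\varrho\ge e^{-\alpha\int f\,d\varrho}\ge e^{-\alpha c_f(1+\sqrt R)}$ used in the reference. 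Two small corrections are worth recording. First, the constant $m_0$ you construct (and, in truth, the one in the lemma as stated) also depends on the growth constant $c_f$ from \cref{assumption}, since $c_f$ enters the denominator lower bound; this imprecision is inherited from the statement itself. Second, your closing diagnosis misplaces where the fourth-moment hypothesis is genuinely needed: the denominator lower bound survives with only second moments (Chebyshev at level $K^2$ suffices), whereas it is the numerator term $\iint|\hat x|\,(|x|+|\hat x|)\,|x-\hat x|\,d\pi$ whose Cauchy--Schwarz estimate truly requires fourth moments, because the squared factor $|\hat x|^2(|x|+|\hat x|)^2$ is quartic---so the ``routine'' step is precisely the one that forces $\varrho,\hat\varrho$ to have fourth moments bounded by $R$.
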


\begin{proof}[Proof of \cref{thm}]
The proof follows the steps in \cite[Theorem~3.1,3.2]{carrillo2018analytical} with modifications made to accommodate the fixed control function $u^* $ in the drift term and the anisotropic diffusion.\\
\textbf{The Fokker-Planck equation in weak sense for fixed $v_t$.}
According to \cref{existence}, for a deterministic function $v_t=v(t) \in \mathcal{C}\left([0, T], \mathbb{R}^d\right)$ and an initial measure $\rho_0 \in \mathcal{P}_4\left(\mathbb{R}^d\right)$, the SDE 
\begin{equation}
\label{sde_bar}
d \bar{X}_t=[-\lambda\left(\bar{X}_t-v_t\right)+\beta u^*(\bar{X}_t)] d t+\sigma \operatorname{Diag}\left(\bar{X}_t-v_t\right) d W_t,\quad \text{law}(\bar{X}_0)=\rho_0,
\end{equation}
has a unique solution if  $F(t,\bar{X}_t):= -\lambda\left(\bar{X}_t-v_t\right)+\beta u^*(\bar{X}_t)$ and $\sigma \operatorname{Diag}(\bar{X}_t-v_t)$ satisfy the  
Lipszhitz condition and restriction on growth. Let $F:=F_1+F_2$, where $F_1:= -\lambda\left(\bar{X}_t-v_t\right)$, $F_2:=\beta u^*(\bar{X}_t)$.
Then, 
$$
\begin{aligned}
|F(x)-F(y)|&\leq|F_1(x)-F_1(y)|+|F_2(x)-F_2(y)|\\
F^2&\leq(F_1+F_2)^2\leq2F_1^2+2F_2^2.
\end{aligned}
$$

It is easily verified that under \cref{assumption}, the SDE \cref{sde_bar} has a unique solution $\bar{X}_t$, continuous with probability 1,  that satisfies the initial condition $\text{law}(\bar{X}_0)=\rho_0$.  
The solution induces  a law $\rho_t=\operatorname{law}\left(\bar{X}_t\right)$. By \cref{moment}  for $n=2$ and regularity assumption of $\rho_0 \in \mathcal{P}_4\left(\mathbb{R}^d\right)$ , we have $ \mathbb{E}\left|\bar{X}_t\right|^4 \leq\left(1+\mathbb{E}\left|\bar{X}_0\right|^4\right) e^{C t}$, where $C$ is constant depend only on $L_f$, $L_u$ and the dimension $d$. In particular, there exists $R<\infty$ such that  $\sup _{t \in[0, T]} \int|x|^4 d \rho_t(x)\leq R$, therefore $\rho_t\in\mathcal{P}_4\left(\mathbb{R}^d\right)$. Combined with the fact $X_t\in\mathcal{C}([0,T],\mathbb{R}^d)$, we have
$\rho \in\mathcal{C}([0,T],\mathcal{P}_4\left(\mathbb{R}^d\right))$. For any $\phi \in \mathcal{C}_b^2\left(\mathbb{R}^d\right)$, by applying Itô's formula to the integral formulation, we derive
\begin{align*}
&\phi(\bar{X}_t)=\phi(\bar{X}_0)+\int_{0}^t\nabla \phi(\bar{X}_s) \cdot\left[-\lambda(\bar{X}_s-v_s)+\beta u^*(\bar{X}_s)\right]d s\\
&+\sigma\int_0^t \nabla \phi\left(\bar{X}_s\right) \cdot \operatorname{Diag}\left(\bar{X}_s-v_s\right) d W_s+\frac{\sigma^2}{2}\int_0^t \sum_{k=1}^d \operatorname{Diag}\left(\bar{X}_s-v_s\right)_{k k}^2 \partial_{k k}^2 \phi(\bar{X}_s)\; ds 
\end{align*}
and taking expectations and differentiating using Fubini's theorem, we have 
\begin{equation}
\begin{aligned}
\frac{d}{d t} \mathbb{E} \phi\left(\bar{X}_t\right)&=\mathbb{E}\left[ \nabla \phi\left(\bar{X}_t\right) \cdot\left(-\lambda \left(\bar{X}_t-v_t\right)+\beta u^*(\bar{X}_t)\right)\right] \\
&+\frac{\sigma^2}{2} \mathbb{E} \left[\sum_{k=1}^d \operatorname{Diag}\left(\bar{X}_t-v_t\right)_{k k}^2 \partial_{k k}^2 \phi(\bar{X}_t)\right].
\end{aligned}
\label{eq_e}
\end{equation}
Thus, the measure $\rho\in\mathcal{C}\left([0, T], \mathcal{P}_4\left(\mathbb{R}^d\right)\right)$  satisfies the Fokker-Planck equation in weak form
\begin{align*}
\frac{d}{d t} \int \phi(x) d \rho_t(x)&=-\lambda \int \left\langle x-v_t, \nabla \phi(x)\right\rangle d \rho_t(x)+ \beta\int \left\langle  u^*(x), \nabla \phi(x)\right\rangle d \rho_t(x)\nonumber\\ &+\frac{\sigma^2}{2} \int\sum_{k=1}^d \operatorname{Diag}\left(x-v_t\right)_{k k}^2 \partial_{k k}^2 \phi(x) \;d \rho_t(x).
\end{align*}
\textbf{Existence of strong solution} 
Start from a fixed function $v(t)\in\mathcal{C}\left([0, T], \mathbb{R}^d\right)$, we uniquely obtain a law $\rho_t$. Now, we define a mapping $\mathcal{T}: \mathcal{C}\left([0, T], \mathbb{R}^d\right) \rightarrow \mathcal{C}\left([0, T], \mathbb{R}^d\right)$ such that
$$ \mathcal{T} v=v_\alpha(\rho)\in \mathcal{C}\left([0, T], \mathbb{R}^d\right),
$$
where $v_\alpha(\rho_t)=\frac{1}{\int_{\mathbb{R}^d} \exp(-\alpha f(x)) \rho_t(x)} \int_{\mathbb{R}^d} x \exp\left(-\alpha f(x)\right) \rho_t(x)$.
To apply the Leray-Schauder fixed point theorem, we first need to prove the compactness and continuity of the mapping $t \mapsto v_\alpha\left(\rho_t\right)$ by showing the Hölder regularity in time $t$. 
Note that
$$\bar{X}_{t}-\bar{X}_s=\int_s^{t} -\lambda\left(\bar{X}_\tau-v_\tau\right)+\beta u^*(\bar{X}_\tau) \;d \tau+\int_s^{t} \sigma \operatorname{Diag}\left(\bar{X}_\tau-v_\tau\right) d W_\tau .$$
By \cref{assumption} and  Itô isometry, for any $0\leq s\leq t\leq T$, we have
\begin{align*}
 W^2_2\left(\rho_t, \rho_s\right)&=\mathbb{E}[|\bar{X}_t-\bar{X}_s|^2]\\
&\leq 3[\lambda^2(t-s)+\sigma^2]\mathbb{E} \int_s^t\left|\bar{X}_\tau-v_\tau\right|^2 d\tau+3(t-s)\beta^2 c_u\;\mathbb{E}\int_s^t(1+|\bar{X}_\tau|^2)d\tau\\
&\leq\left[3C(\lambda^2(t-s)+\sigma^2)+3(t-s)\beta^2 c_u\right]\mathbb{E}\int_s^t(1+|\bar{X}_\tau|^2)\;d\tau\\
&\leq \left[3C\left(\lambda^2 T+\sigma^2\right)+3T\beta^2c_u\right](1+R)|t-s|=:m^2|t-s|,
\end{align*}
The first inequality is due to the linear growth property of the drift term, $C$ is a constant.
Therefore, $ W_2\left(\rho_t, \rho_s\right) \leq m|t-s|^{\frac{1}{2}}$, for some constant $m>0$. Applying the result from \cref{lemma3}, we obtain
$$
\left|v_\alpha(\rho_t)-v_\alpha(\rho_s)\right| \leq m_0 W_2\left(\rho_t, \rho_s\right) \leq m_0 m|t-s|^{\frac{1}{2}},
$$
for some constant $m_0$ only depends on $\alpha, R$ and $L_f$. Since the mapping $t \mapsto v_\alpha\left(\rho_t\right)$
is Hölder continuous with exponent $1 / 2$,  therefore the compactness of $\mathcal{T}$ follows from the compact embedding
$\mathcal{C}^{0,1 / 2}\left([0, T], \mathbb{R}^d\right) \hookrightarrow\mathcal{C}\left([0, T], \mathbb{R}^d\right)$. Now we consider a set $\mathcal{V}=\{v \in \mathcal{C}\left([0, T], \mathbb{R}^d\right)\mid v=\tau \mathcal{T} v,\tau \in[0,1]\}$, the set $\mathcal{V}$ is non-empty due to the compactness of $\mathcal{T}$  (see \cite{gilbarg1977elliptic}). For any $v\in\mathcal{V}$, we have a corresponding process $\bar{X}$ satisfying \cref{sde_bar}, then there exists $\rho \in \mathcal{C}\left([0, T], \mathcal{P}_4\left(\mathbb{R}^d\right)\right)$ satisfying (\ref{FP}) such that $v=\tau v_\alpha(\rho)$.
By equation \cref{eq_e} and \cref{assumption} we have
\begin{align*}
\frac{d}{d t} \mathbb{E} \left|\bar{X}_t\right|^2&=\mathbb{E}\left[\sigma^2 \left|\bar{X}_t-v_t\right|^2 -2 \lambda \bar{X}_t\left(\bar{X}_t-v_t\right)+2\beta\bar{X}_t u^*(\bar{X}_t)\right]\\
&\leq \left( \sigma^2-2 \lambda+\beta(1+c_u)+ |\gamma|\right) \mathbb{E}\left|\bar{X}_t\right|^2 +\left(\sigma^2+|\gamma|\right)\left|v_t\right|^2+\beta c_u\\
&\leq (\sigma^2+\beta(1+c_u)+|\gamma|)\left(e^{\alpha(\bar{f}-\underline{f})}+1\right)\mathbb{E}\left|\bar{X}_t\right|^2+\beta c_u\\
&=:\bar{c} \,\mathbb{E}\left|\bar{X}_t\right|^2+\beta c_u
\end{align*}
with $\gamma=\lambda-\sigma^2$ and a constant $\bar{c}$. The last inequality is due to the fact $$|v_t|^2=\tau^2|\mathcal{T} v_t|^2=\tau^2\left|v_\alpha(\rho_t)\right|^2 \leq  e^{\alpha(\bar{f}-\underline{f})} \mathbb{E}\left|\bar{X}_t\right|^2.$$
From Gronwall's inequality we obtain
$
\mathbb{E}\left|\bar{X}_t\right|^2 \leq\left(\mathbb{E}\left|\bar{X}_0\right|^2+\beta c_u t\right) e^{\bar{c}\, t},
$ 
which implies the boundedness of the set $\mathcal{V}$. By applying the Leray-Schauder fixed point theorem, we conclude the existence of  fixed point for the mapping $\mathcal{T}$ and thereby the existence of  solution to \cref{sdee}.\\
\textbf{Uniqueness of strong solution} Suppose we have two fixed points $v$ and $\hat{v}$ of mapping $\mathcal{T}$ and their corresponding processes $X_t, \hat{X}_t$ satisfying \cref{sdee}, respectively.
Let $y_t:=X_t-\hat{X}_t$, then due to the Assumption \ref{assumption} we can easily obtain
\small
\begin{equation*}
y_t\leq y_0-\lambda \int_0^t y_s d s+\lambda \int_0^t(v_s-\hat{v}_s) d s
+\beta L_u\int_0^t|y_s|d s+\sigma\int_0^t(\operatorname{Diag}(y_s)+\operatorname{Diag}(\hat{v}_s-v_s))d W_s,
\end{equation*}
\normalsize
Squaring on both sides, taking the expectation and applying the Ito isometry yields
\begin{displaymath}
\mathbb{E}\left|y_t\right|^2 \leq  5\mathbb{E}\left|y_0\right|^2+5\left(\lambda^2 t+\sigma^2+
\beta^2L_u^2t\right) \int_0^t \mathbb{E}\left|y_s\right|^2 d s+5 (\lambda^2t+\sigma^2)\int_0^t\left|v_s-\hat{v}_s\right|^2 d s,
\end{displaymath}
Also we have
$$
|v_s-\hat{v}_s|=\left|v_\alpha(\rho_s)-v_\alpha(\hat{\rho}_s)\right| \leq m_0W_2\left(\rho_s, \hat{\rho}_s\right) \leq m_0 \sqrt{\mathbb{E}\left|y_s\right|^2},
$$
with $\rho=\text{law}(X)$ and
$\hat{\rho}=\text{law}(\hat{X})$. We further obtain
\begin{align*}
\mathbb{E}\left|y_t\right|^2 &\leq 5\mathbb{E}\left|y_0\right|^2+5\left(\left(1+m_0^2\right)(\lambda^2 t+\beta^2L_u^2t+\sigma^2)\right) \int_0^t \mathbb{E}\left|y_s\right|^2 d s\\
\frac{d}{dt}\mathbb{E}\left|y_t\right|^2 &\leq \beta(t) \mathbb{E}\left|y_t\right|^2,
\end{align*}
where $\beta(t):=5\left(\left(1+m_0^2\right)(\lambda^2 t+\beta^2L_u^2t+\sigma^2)\right)$, then by applying Gronwall's inequality we have 
$\mathbb{E}\left|y_t\right|^2\leq \mathbb{E}\left|y_0\right|^2\exp(\int_0^t\beta(s)ds)=0$ for all $t \in[0, T]$. Therefore, we obtain the uniqueness of the solution of \cref{sdee}.
\end{proof}
\section{Verifying \cref{assumption} for the quadratic case}
\label{appendix: qua}
In this section, we derive the control function $u^*$ in quadratic cases and show that it satisfies \cref{assumption}. If the objective function $f$ is quadratic, then we have
$$\min _{x \in \mathbb{R}^d} f(x)=\min _{x \in \mathbb{R}^d} x^\top Qx,\quad Q=Q^\top \succeq 0.$$ 
Consider the system in state-space form,
$\dot{y}(t)=u(t), \quad y(0)=x, \quad \mathcal{U}=\mathbb{R}^d$
and the cost function
$\mathcal{J}(u(\cdot), x)=\int_0^{\infty}e^{-\mu t}\left[y(t)^\top  Q y(t)+\frac{\epsilon}{2}u(t)^\top u(t)\right] d t.$
  Our goal is to find the optimal cost function $V(x)= \inf_{u\in\mathcal{U}} J(u(\cdot),x)$ which satisfies the HJB:
$$
-\mu V(x)+\min _u\left[x^\top  Qx+\frac{\epsilon}{2}u^\top u+DV(x)^\top u\right]=0.
$$
Since the optimal cost function is quadratic in this case. Let
$
V(x)=x^\top  S x,$ with $ S=S^\top  \succeq 0$, then the solution of the HJB equation is
$
\epsilon u+2 x^\top  S=0.
$
This yields the linear optimal control
\begin{equation}
u^*(x)=- \frac{2}{\epsilon}Sx.
\label{u_qua}
\end{equation}
Plugging $u^*$ back into the HJB, we have
$
0=x^\top [-\mu S+Q-\frac{2}{\epsilon}S^\top  S] x,
$
which implies $\mu S+\frac{2}{\epsilon}S^\top  S=Q$. The solution to the system $\dot{y}=u^*(y(t))=- \frac{2}{\epsilon}Sy(t)$ is then $y(t)=e^{-\frac{2}{\epsilon}St}y(0)$. Finally, it is easy to verify that 
the optimal control \cref{u_qua} fulfills the \cref{assumption}(3) with $L_u=\frac{2}{\epsilon}\|S\|_\infty$ and $c_u=\frac{4}{\epsilon^2}\lambda_{\text{max}}(S^\top S).$

\bibliographystyle{siamplain}
\bibliography{references}

\end{document}